\newcommand{\norm}[1]{\left\| #1 \right\|}  
\newcommand{\scprd}[1]{\left\langle #1 \right\rangle}  
\renewcommand{\d}{\,\mathrm{d}} 
\newcommand{\e}{\mathrm{e}} 
\newcommand{\N}{\mathbb{N}}  
\newcommand{\R}{\mathbb{R}}
\newcommand{\supp}{\operatorname{supp}}
\renewcommand{\phi}{\varphi}
\newcommand{\ul}{\underline}
\newcommand{\ol}{\overline}
\renewcommand{\Re}{\operatorname{Re}}
\numberwithin{equation}{section}
\newtheorem{thm}{Theorem}[section]
\newtheorem{cor}[thm]{Corollary}
\newtheorem{lm}[thm]{Lemma}
\newtheorem{cond}[thm]{Condition}
\theoremstyle{definition} \newtheorem{ex}[thm]{Example}
\theoremstyle{definition}
\title{Well-posedness and stability of non-autonomous semilinear input-output 
systems}
\author{Jochen Schmid\\  
\small Fraunhofer Institute for Industrial Mathematics (ITWM), 67663 Kaiserslautern, Germany\\ 
\small jochen.schmid@itwm.fraunhofer.de}
\date{}
\begin{document}

\maketitle

\begin{abstract}
\small{ \noindent 
We establish well-posedness results for non-autonomous semilinear input-output systems, the central assumption being the scattering-passivity of the considered semilinear system. Along the way, we also establish global stability estimates. We consider both systems with distributed control and observation and systems with boundary control and observation. Applications are given to nonlinearly controlled collocated systems and to nonlinearly controlled port-Hamiltonian systems. 
}
\end{abstract}

{ \small \noindent 
Index terms:  Well-posedness, uniform global stability, non-autonomous systems, semilinear systems, infinite-dimensional systems, generalized solutions and outputs
}

\section{Introduction}

In this paper, we establish well-posedness results for non-autonomous semilinear input-output systems whose input and output operators are linear. 
%
We consider semilinear systems with bounded control and observation operators described by 
\begin{equation} \label{eq:wp-semilin-distr-contr/obs}
\begin{gathered}
\dot{x}(t) = \mathcal{A}(t)x(t) + f(t,x(t)) + \mathcal{B}(t)u(t)\\
y(t) = \mathcal{C}(t)x(t),
\end{gathered}
\end{equation}
and semilinear systems with unbounded control and observation operators described by
\begin{equation} \label{eq:wp-semilin-bdry-contr/obs}
\begin{gathered}
\dot{x}(t) = \mathcal{A}(t)x(t) + f(t,x(t)) \\
u(t) = \mathcal{B}(t)x(t) \quad \text{and} \quad y(t) = \mathcal{C}(t)x(t).
\end{gathered}
\end{equation}
In these equations, $x(t) \in X$ is the state of the system at time $t$ ($X$ being a Banach space) and $u$, $y$ are the control input and observation output of the system taking values in an input-value and an output-value space $U$, $Y$ (Banach spaces) respectively. Also, 
$$\mathcal{A}(t): D(\mathcal{A}(t)) \subset X \to X$$
is a linear operator and $f: \R^+_0 \times X \to X$ is a time-dependent nonlinearity. And finally, in the case~\eqref{eq:wp-semilin-distr-contr/obs} of bounded control and observation operators, the input and output operators 
\begin{align} \label{eq:bd-io-op}
\mathcal{B}(t): U \to X \quad \text{and} \quad \mathcal{C}(t): X \to Y
\end{align} 
are bounded linear operators while, in the case~\eqref{eq:wp-semilin-bdry-contr/obs} of unbounded control and observation operators, the input and output operators 
\begin{align} \label{eq:unbd-io-op}
\mathcal{B}(t): D(\mathcal{B}(t)) \subset X \to U \quad \text{and} \quad \mathcal{C}(t): D(\mathcal{C}(t)) \subset X \to Y
\end{align}
are unbounded linear operators. In applications, distributed control and observation correspond to bounded in- and output operators, while boundary control and observation typically -- but not necessarily -- correspond to unbounded in- and output operators. (See the example in Section~\ref{sect:appl-bd} below, for instance, for a boundary control and observation system that can be formulated with bounded in- and output operators.) 
\smallskip 

What we are interested in here is the well-posedness of non-autonomous semilinear systems as above. In rough terms, this means that for every initial state $x_0 \in X$ and every input $u \in L^2_{\mathrm{loc}}(\R^+_0,U)$ the respective system has a unique generalized solution $x(\cdot,x_0,u) \in C(\R^+_0,X)$ and a unique generalized output $y(\cdot,x_0,u) \in L^2_{\mathrm{loc}}(\R^+_0,Y)$ and that these quantities depend continuously on $(x_0,u) \in X\times L^2_{\mathrm{loc}}(\R^+_0,U)$. 
%
Well-posedness is a most fundamental notion in control theory and is relevant for almost every control system. 
Accordingly, there exist a lot of papers devoted partly or completely to the well-posedness of input-output systems, especially infinite-dimensional ones. 
%
So far, however, most of these papers on infinite-dimensional systems have been confined 
\begin{itemize}
\item either to the case of linear non-autonomous systems (like~\cite{Sc02}, \cite{Ha06}, \cite{ScWe10}, \cite{ChWe15} or \cite{JaLa19}, for instance)
\item or to the case of autonomous semilinear systems (like~\cite{TuWe14} or \cite{HaCaZw19}, for instance).
\end{itemize}
In fact, we are aware of only a few papers which treat non-autonomous semilinear systems in the context of control theory, namely~\cite{JaDrPr95}, \cite{BoIdMa06}, \cite{BoId08}. Yet, the settings of those papers are quite different from ours. 
Indeed, \cite{JaDrPr95}, for instance, deal with closed-loop systems arising by nonlinear output feedback from a non-autonomous linear system.  
In particular, 
\cite{JaDrPr95} consider no external inputs $u$ like we do in~\eqref{eq:wp-semilin-distr-contr/obs} and~\eqref{eq:wp-semilin-bdry-contr/obs}. And accordingly, the question of continuous dependence of the solutions and outputs on external inputs and intial states  (which is central to well-posedness and the present paper) does not arise in~\cite{JaDrPr95}. 
\smallskip

Along with well-posedness, we also establish the uniform global stability of the systems~\eqref{eq:wp-semilin-distr-contr/obs} and~\eqref{eq:wp-semilin-bdry-contr/obs}. In fact, uniform global stability estimates for classical solutions of~\eqref{eq:wp-semilin-distr-contr/obs} and~\eqref{eq:wp-semilin-bdry-contr/obs} are also an important intermediate step in proving our well-posedness results. In rough terms, the uniform global stability of~\eqref{eq:wp-semilin-distr-contr/obs} or~\eqref{eq:wp-semilin-bdry-contr/obs} respectively means that $x_0 := 0$ is a globally stable equilibrium point of the system~\eqref{eq:wp-semilin-distr-contr/obs} or~\eqref{eq:wp-semilin-bdry-contr/obs} without external input (that is, with $u := 0$) and that this stability property is affected only slightly by small non-zero inputs $u$. In applications, the external input $u$ typically has the interpretation of a disturbance signal. 
Stability properties like uniform global stability are, of course, already interesting in themselves, but they also play an important role in establishing input-to-state stability. See \cite{MiWi16a}, \cite{Sc18-wISS}, for instance, where the case of autonomous systems is treated. 
\smallskip

In the conference version~\cite{ScDaJaLa19} of this paper, we present only special cases 
of our well-posedness results with technically simpler assumptions like, for instance, a time-independent domain assumption on the linear parts $\mathcal{A}(t)$ of~\eqref{eq:wp-semilin-distr-contr/obs} or~\eqref{eq:wp-semilin-bdry-contr/obs} respectively. Also, \cite{ScDaJaLa19} does not contain proofs but only rough sketches of proofs and, moreover, \cite{ScDaJaLa19} does not give any applications of the abstract well-posedness and stabiliy results.
\smallskip

In the present paper, we proceed as follows. Section~\ref{sect:preliminaries} provides the precise definitions of well-posedness and uniform global stability and, moreover, establishes important lemmas on non-autonomous semilinear systems without control or observation. 
In Section~\ref{sect:wp-bd-io-op} and~\ref{sect:wp-unbd-io-op}, we establish our well-posedness and uniform global stability results for non-autonomous semilinear systems~\eqref{eq:wp-semilin-distr-contr/obs} and~\eqref{eq:wp-semilin-bdry-contr/obs} with bounded or unbounded control and observation operators, respectively. A central assumption for the proof of those results is the scattering passivity of~\eqref{eq:wp-semilin-distr-contr/obs} or~\eqref{eq:wp-semilin-bdry-contr/obs} respectively. In our exposition, we make an effort to 
emphasize the parellelism between the cases~\eqref{eq:wp-semilin-distr-contr/obs} and~\eqref{eq:wp-semilin-bdry-contr/obs}. 
As a consequence, the results of Section~\ref{sect:wp-bd-io-op} and Section~\ref{sect:wp-unbd-io-op} look very similar -- but their proofs are different due to the mathematically fairly different situations~\eqref{eq:wp-semilin-distr-contr/obs} and~\eqref{eq:wp-semilin-bdry-contr/obs} of bounded or unbounded control and observation operators, respectively. 
In Section~\ref{sect:applications}, we finally apply the abstract well-posedness and stability results to two classes of non-autonomous semilinear systems, namely those arising by coupling a nonlinear controller by standard feedback interconnection to a linear collocated system (Section~\ref{sect:appl-bd}) or to a linear port-Hamiltonian system (Section~\ref{sect:appl-unbd}). Concrete examples include nonlinearly controlled Euler-Bernoulli and Timoshenko beams with possibly time-dependent material parameters (like the flexural rigidity and the mass density of the beams, for instance).  
\smallskip

In the entire paper, $\R^+_0 := [0,\infty)$ denotes the non-negative reals and, as usual, $\mathcal{K}$ and $\mathcal{K}_{\infty}$ denote the following classes of comparison functions:
\begin{equation} \label{eq:comparison-fcts-def}
\begin{gathered}
\mathcal{K} := \{ \gamma \in C(\R^+_0,\R^+_0): \gamma \text{ strictly increasing with } \gamma(0) = 0 \} \\
\mathcal{K}_{\infty} := \{ \gamma \in \mathcal{K}: \gamma \text{ unbounded} \}.
\end{gathered}
\end{equation}
Also, $C_c^2(\R^+_0,U)$ denotes the space of $C^2(\R^+_0,U)$-functions $u$ with compact support in $\R^+_0$ 
and for $u \in L_{\mathrm{loc}}^p(\R^+_0,U)$ we will use the following short-hand notations: 
\begin{align} \label{eq:p-norms-abbrev}
\norm{u}_{[0,t],p} := \norm{u|_{[0,t]}}_{L^p([0,t],U)}.
\end{align}  
And finally, for a linear operator $\mathcal{A}$ the symbol $D(\mathcal{A})$ denotes its domain and $L(X,Y)$ denotes the space of bounded linear operators between $X$ and $Y$ with the usual short-hand notation $L(X) := L(X,X)$.

\section{Some preliminaries} \label{sect:preliminaries}

\subsection{Solution concepts and well-posedness}

We begin by properly defining the well-posedness of semilinear systems of the kind~\eqref{eq:wp-semilin-distr-contr/obs} and~\eqref{eq:wp-semilin-bdry-contr/obs} with bounded or unbounded in- and output operators~\eqref{eq:bd-io-op} or~\eqref{eq:unbd-io-op}, respectively. In order to do so, we need various solution concepts.
A \emph{classical solution} to~\eqref{eq:wp-semilin-distr-contr/obs} or~\eqref{eq:wp-semilin-bdry-contr/obs} for given initial state $x_s \in X$ at time $s \in \R^+_0$ and given input $u \in L^2_{\mathrm{loc}}(\R^+_0,U)$ is a function $x \in C^1(J,X)$ on some interval $J$ with $\min J = s$ such that $x(s) = x_s$ and such that 
\begin{itemize}
\item $x(t) \in D(\mathcal{A}(t))$ and~\eqref{eq:wp-semilin-distr-contr/obs} is satisfied for every $t \in J$, or
\item $x(t) \in D(\mathcal{A}(t)) \cap D(\mathcal{B}(t)) \cap D(\mathcal{C}(t))$ and~\eqref{eq:wp-semilin-bdry-contr/obs} is satisfied for every $t \in J$, 
\end{itemize}
respectively. 
A \emph{generalized solution} and a \emph{generalized output} to~\eqref{eq:wp-semilin-distr-contr/obs} or~\eqref{eq:wp-semilin-bdry-contr/obs} for given initial state $x_0 \in X$ at time $0$ and given input $u \in L^2_{\mathrm{loc}}(\R^+_0,U)$ is a function 
$$x \in C(\R^+_0,X) \quad \text{and} \quad y \in L^2_{\mathrm{loc}}(\R^+_0,Y)$$
such that there exists a sequence of 
initial states and inputs $(x_{0 n},u_n) \in X \times C_c^2(\R^+_0,U)$ which converge to $(x_0,u)$:
\begin{align} \label{eq:wp-def-data}
(x_{0 n},u_n) \underset{X \times L^2_{\mathrm{loc}}(\R^+_0,U)}{\longrightarrow} (x_0,u),
\end{align}
and for which the system~\eqref{eq:wp-semilin-distr-contr/obs} or~\eqref{eq:wp-semilin-bdry-contr/obs} 
has a unique global classical solution $x(\cdot,x_{0n},u_n)$ satisfying
\begin{align} \label{eq:wp-def}
x(\cdot,x_{0n},u_n) \underset{C(\R^+_0,X)}{\longrightarrow} x 
\quad \text{and} \quad
\mathcal{C}(\cdot) x(\cdot,x_{0n},u_n) \underset{L^2_{\mathrm{loc}}(\R^+_0,Y)}{\longrightarrow} y.
\end{align}
All three convergences above are w.r.t.~the canonical locally convex topologies, that is, the first convergence~\eqref{eq:wp-def-data} means that $x_{0n} \longrightarrow x_0$ in the norm of $X$ and $\norm{u_n - u}_{[0,t],2} \longrightarrow 0$ as $n \to \infty$ 
for every $t \in (0,\infty)$, while the second and third convergence~(\ref{eq:wp-def}.a) and~(\ref{eq:wp-def}.b) mean that 
\begin{align*}
\norm{x_n - x}_{[0,t],\infty}, \quad \norm{y_n - y}_{[0,t],2} \longrightarrow 0 \qquad (n \to \infty)
\end{align*}
for every $t \in (0,\infty)$, where we used the abbreviations $x_n := x(\cdot,x_{0n},u_n)$ and $y_n := \mathcal{C}(\cdot) x(\cdot,x_{0n},u_n)$ and the notation~\eqref{eq:p-norms-abbrev}.
\emph{Well-posedness} of the system~\eqref{eq:wp-semilin-distr-contr/obs} or~\eqref{eq:wp-semilin-bdry-contr/obs} now  means that, 
for every initial state $x_0 \in X$ and every input $u \in L^2_{\mathrm{loc}}(\R^+_0,U)$, 
the system 
has a unique 
generalized solution and generalized output
\begin{align*}
x(\cdot,x_0,u) \in C(\R^+_0,X) 
\quad \text{and} \quad
y(\cdot,x_0,u) \in L^2_{\mathrm{loc}}(\R^+_0,Y)
\end{align*}
respectively, and that these quantities depend continuously on $(x_0,u)$, that is, 
the functions 
\begin{align*}
(x_0,u) \mapsto x(\cdot,x_0,u) \in C(\R^+_0,X) 
\qquad \text{and} \qquad
(x_0,u) \mapsto y(\cdot,x_0,u) \in L^2_{\mathrm{loc}}(\R^+_0,Y)
\end{align*}
are continuous w.r.t.~the canonical locally convex topologies. 
(A reader familiar with the conference version~\cite{ScDaJaLa19} of this paper might have realized that we slightly modified -- in fact, rectified -- the definition of generalized solutions and outputs from~\cite{ScDaJaLa19}. Indeed, in contrast to that paper, we require here that the approximating inputs $u_n$ from~\eqref{eq:wp-def-data} and~\eqref{eq:wp-def} belong to $C_c^2(\R^+_0,U)$. We do so because otherwise it is not clear how to establish the uniqueness of generalized solutions and outputs under the assumptions of our well-posedness theorems below. See the remarks in parentheses after~\eqref{eq:generalized-sol,distr} and~\eqref{eq:generalized-sol,bdry} below.)

\subsection{Stability}

System~\eqref{eq:wp-semilin-distr-contr/obs} or~\eqref{eq:wp-semilin-bdry-contr/obs} respectively is called \emph{uniformly globally stable} iff for every initial state $x_0 \in X$ at time $0$ and for every input $u \in L^2_{\mathrm{loc}}(\R^+_0,U)$ it has a unique generalized solution $x(\cdot,x_0,u) \in C(\R^+_0,X)$ and there exist comparison functions $\sigma, \gamma \in \mathcal{K}$ as in~\eqref{eq:comparison-fcts-def} such that the estimate
\begin{align} \label{eq:UGS-def}
\norm{x(t,x_0,u)} \le \sigma(\norm{x_0}) + \gamma(\norm{u}_{[0,t],2}) 
\qquad (t \in \R^+_0)
\end{align} 
is satisfied for every $(x_0,u) \in X \times L^2_{\mathrm{loc}}(\R^+_0,U)$. In particular, this estimate means that $x_0 := 0$ is an equilibrium point, and a globally stable one, of the system~\eqref{eq:wp-semilin-distr-contr/obs} or~\eqref{eq:wp-semilin-bdry-contr/obs} with input $u := 0$ and that this global stability property is impaired only slightly by external inputs $u$ of small magnitudes $\norm{u}_{[0,t],2}$.

\subsection{Semilinear systems without control or observation} 

We now collect some preliminaries on the solvability of semilinear evolution equations 
\begin{align} \label{eq:semilin-evol-eq-without-in/outputs}
\dot{x}(t) = A(t)x(t) + f(t,x(t))
\end{align}
without control inputs or observation outputs, which will be repeatedly used in the sequel. 
%
We recall that a family $A$ of operators $A(t): D(A(t)) \subset X \to X$ with $t \in \R^+_0$ is called \emph{locally Kato-stable}~\cite{Kato70}, \cite{Nickel00} iff $A(t)$ is a semigroup generator on $X$ for every $t\in \R^+_0$ and for every $t_0 \in (0,\infty)$ there exist constants $M_{t_0} \in [1,\infty)$ and $\omega_{t_0} \in \R$ such that
\begin{align} \label{eq:kato-stabilitaet,def}
\norm{ \e^{A(t_n) s_n} \dotsb \e^{A(t_1) s_1} } 
\le 
M_{t_0} \e^{\omega_{t_0}(s_1 + \dotsb + s_n)}
\end{align}
for all $s_1, \dots, s_n \in \R^+_0$ and all $t_1, \dots, t_n \in [0,t_0]$ satisfying $t_1 \le \dotsb \le t_n$ with arbitrary $n \in \N$.

\begin{cond} \label{ass:A(t)}
$A(t) = A_0(t)M(t)$ 
for $t \in \R^+_0$, where $A_0(t): D(A_0(t)) \subset X \to X$ are linear operators with time-independent domains $D(A_0(t)) = D_0$ and where $M(t) \in L(X)$ are bijective 
onto $X$ such that
\begin{itemize}
\item the family $M A_0$ consisting of the operators $M(t)A_0(t)$ is locally Kato-stable
\item $t \mapsto A_0(t)x$ is continuously differentiable for every $x \in D_0$ and $t \mapsto M(t)$ is twice strongly continuously differentiable.
\end{itemize}
\end{cond}

A simple sufficient condition for the above assumption to hold is provided by the following lemma. See Example~2.6 of~\cite{JaLa19}.

\begin{lm} \label{lm:vereinf-vor-A(t)}
Suppose that $X$ is a Hilbert space and $A(t) = A_0(t)M(t)$ for $t \in \R^+_0$, where 
\begin{itemize}
\item $A_0(t): D(A_0(t)) \subset X \to X$ are contraction semigroup generators on $X$ with time-independent domains $D(A_0(t)) = D_0$ and $t\mapsto A_0(t)x$ is continuously differentiable
\item $M(t) \in L(X)$ are symmetric 
and there exist constants $\ul{m}, \ol{m} \in (0,\infty)$ such that
\begin{align} \label{eq:M(t)-lower-and-upper-bounded}
\ul{m} \le M(t) \le \ol{m} \qquad (t\in \R^+_0),
\end{align}
and $t \mapsto M(t)$ is twice strongly continuously differentiable.
\end{itemize} 
Condition~\ref{ass:A(t)} is then satisfied.
\end{lm}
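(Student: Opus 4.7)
The plan is to verify each clause of Condition~\ref{ass:A(t)}. Bijectivity of $M(t)$ onto $X$ is immediate since a bounded symmetric operator with $M(t) \geq \ul{m} I > 0$ has a bounded inverse, and both differentiability conditions are assumed directly. The substantive step is to derive local Kato-stability of the family $\{M(t) A_0(t)\}_{t \geq 0}$ (note: on the time-independent domain $D_0$, not the domain of $A(t) = A_0(t)M(t)$). To this end I would introduce, on $X$, the time-dependent equivalent inner product
\begin{equation*}
\scprd{x,y}_t := \scprd{M(t)^{-1} x, y},
\end{equation*}
whose induced norm satisfies $\ul{m}^{1/2} \norm{x}_t \leq \norm{x} \leq \ol{m}^{1/2} \norm{x}_t$ by the spectral bounds on $M(t)^{-1}$.

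First I would show that, for each fixed $t$, the operator $M(t) A_0(t)$, viewed on its time-independent domain $D_0$, is m-dissipative in the Hilbert space $(X, \scprd{\cdot,\cdot}_t)$ and hence generates a $\norm{\cdot}_t$-contraction semigroup. Dissipativity follows at once from
\begin{equation*}
\Re \scprd{M(t) A_0(t) x, x}_t = \Re \scprd{M(t)^{-1} M(t) A_0(t) x, x} = \Re \scprd{A_0(t) x, x} \leq 0,
\end{equation*}
since $A_0(t)$ is a contraction semigroup generator. For the range condition I would factor
$\lambda - M(t) A_0(t) = -M(t) \bigl( A_0(t) - \lambda M(t)^{-1} \bigr)$ on $D_0$, and observe that $-\lambda M(t)^{-1}$ is a bounded self-adjoint operator with spectrum in $[-\lambda/\ul{m}, -\lambda/\ol{m}]$. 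Thus $A_0(t) - \lambda M(t)^{-1}$ is a bounded perturbation of the m-dissipative $A_0(t)$ that generates a semigroup of exponential type $\leq -\lambda/\ol{m} < 0$, hence is boundedly invertible from $D_0$ onto $X$; composing with the bijection $M(t)$ yields the required range.

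Next I would quantify how the $t$-norms vary. Strong $C^2$-regularity of $M$ yields strong differentiability of $M(\cdot)^{-1}$ with $\ddt{M(t)^{-1} x} = -M(t)^{-1} M'(t) M(t)^{-1} x$, and the uniform boundedness principle provides $K_{t_0} := \sup_{t \in [0,t_0]} \norm{M'(t)} < \infty$. With $C_{t_0} := \ol{m} K_{t_0} / \ul{m}^{2}$ one obtains
\begin{equation*}
\Bigl| \ddt{\norm{x}_t^2} \Bigr| = \bigl| \scprd{M(t)^{-1} M'(t) M(t)^{-1} x, x} \bigr| \leq C_{t_0} \norm{x}_t^2,
\end{equation*}
so Gronwall's inequality gives $\norm{x}_t \leq \e^{C_{t_0} |t-s|/2} \norm{x}_s$ for $s, t \in [0,t_0]$. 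Chaining this bound with the per-step contractivity of $T_i := \e^{M(t_i) A_0(t_i) s_i}$ in $\norm{\cdot}_{t_i}$, one gets telescopically, for $0 \leq t_1 \leq \cdots \leq t_n \leq t_0$ and $s_i \geq 0$,
\begin{equation*}
\norm{T_n \cdots T_1 x}_{t_n} \leq \e^{C_{t_0}(t_n - t_1)/2} \norm{x}_{t_1},
\end{equation*}
which, translated back into the original norm, yields~\eqref{eq:kato-stabilitaet,def} with $M_{t_0} = (\ol{m}/\ul{m})^{1/2} \e^{C_{t_0} t_0 / 2}$ and $\omega_{t_0} = 0$. The main obstacle is precisely this last telescoping step: the constants must be independent of both $n$ and the step sizes $s_i$, and this works out exactly because the Gronwall factor accumulates only over increments of $t_i \in [0,t_0]$ while the semigroup pieces $T_i$ are genuine contractions in the appropriate $\norm{\cdot}_{t_i}$-norms.
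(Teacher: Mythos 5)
Your proposal is correct and follows essentially the same route as the paper: the paper's proof simply delegates the generation property to the multiplicative-perturbation argument of Lemma~7.2.3 in \cite{JacobZwart} and the Kato-stability estimate to the middle part of the proof of Proposition~2.3 in \cite{ScWe10}, and your equivalent-inner-product construction $\scprd{x,y}_t = \scprd{M(t)^{-1}x,y}$ together with the Gronwall-plus-telescoping bound is precisely a self-contained writeup of those two cited arguments.
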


\begin{proof}
We have obviously only to prove the local Kato-stability of the family $MA_0$. In order to see that $M(t)A_0(t)$ is a semigroup generator on $X$ for every $t \in \R^+_0$, one can argue as in the proof of Lemma~7.2.3 of~\cite{JacobZwart} and in order to see that the semigroups generated by the operators $M(t)A_0(t)$ satsify estimates of the form~\eqref{eq:kato-stabilitaet,def}, one can argue as in the middle part of the proof of Proposition~2.3 from~\cite{ScWe10}.
\end{proof}

In the next result, we discuss the classical solvability of the linear problem
\begin{align} \label{eq:lin-evol-eq-without-in/outputs}
\dot{x}(t) = A(t)x(t) 
\end{align}
corresponding to~\eqref{eq:semilin-evol-eq-without-in/outputs}, that is, of~\eqref{eq:semilin-evol-eq-without-in/outputs} with $f(t,x) \equiv 0$. It is based on standard results~\cite{Kato56}, \cite{Kisynski63}, \cite{Kato70}  for 
non-autonomous linear evolution equations and slightly extends a solvability result from~\cite{ScWe10} (Proposition~2.8(a)), where the Hilbert space setting from Lemma~2.2 above is assumed. 
%
It is most conveniently formulated in terms of (solving) evolution systems. A \emph{(solving) evolution system for $A$ on the spaces $D(A(s))$}~\cite{EngelNagel}, \cite{diss} is, by definition, a family $T$ of bounded operators $T(t,s) \in L(X)$ for $(s,t) \in \Delta := \{(s,t) \in (\R^+_0)^2: s \le t\} $ such that 
\begin{itemize}
\item[(i)] for every $s \in \R^+_0$ and $x_s \in D(A(s))$, the map $[s,\infty) \ni t \mapsto T(t,s)x_s$ is a classical solution of~\eqref{eq:lin-evol-eq-without-in/outputs}
\item[(ii)] $T(t,s)T(t,r) = T(t,r)$ for all $(r,s), (s,t) \in \Delta$ and $\Delta \ni (s,t) \mapsto T(t,s)$ is strongly continuous. 
\end{itemize} 


\begin{lm} \label{lm:zeitentw-zu-A, ex-und-absch}
Suppose that $A(t) = A_0(t)M(t)$ are operators as in Condition~\ref{ass:A(t)}. Then there exists a unique evolution system $T$ for $A$ on the spaces $D(A(s))$ and for every $t_0 \in (0,\infty)$ there exist constants $M_{t_0} \in [1,\infty)$ and $\omega_{t_0} \in \R$ such that
\begin{align} \label{eq:zeitentw-zu-A,absch}
\norm{T(t,s)} \le M_{t_0} \e^{\omega_{t_0}(t-s)} \qquad ((s,t) \in \Delta_{[0,t_0]} := \Delta \cap [0,t_0]^2). 
\end{align}
\end{lm}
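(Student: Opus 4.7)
The strategy is to reduce the problem with time-dependent domains $D(A(t)) = M(t)^{-1}(D_0)$ to an equivalent problem with the fixed domain $D_0$, by a time-dependent similarity transform, and then to invoke the classical Kato theorem~\cite{Kato70} for non-autonomous linear evolution equations with constant domain.

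Concretely, given a candidate classical solution $x$ of~\eqref{eq:lin-evol-eq-without-in/outputs}, I set $\tilde{x}(t) := M(t) x(t)$. Since $x(t) \in D(A(t))$ is equivalent to $\tilde{x}(t) \in D_0$, and since
\begin{equation*}
\dot{\tilde{x}}(t) = \dot{M}(t) x(t) + M(t) \dot{x}(t) = \dot{M}(t) M(t)^{-1} \tilde{x}(t) + M(t) A_0(t) \tilde{x}(t) =: B(t) \tilde{x}(t),
\end{equation*}
the transformed problem is a linear Cauchy problem on $X$ with constant domain $D(B(t)) = D_0$ and generator family $B(t) := M(t) A_0(t) + \dot{M}(t) M(t)^{-1}$. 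The first main step is to verify the hypotheses of the Kato existence theorem for $B$. Local Kato-stability of $B$ follows from the local Kato-stability of $MA_0$ (which is assumed) together with the fact that bounded perturbations preserve Kato-stability; here the perturbation $\dot{M}(\cdot) M(\cdot)^{-1}$ is strongly continuous and locally uniformly bounded, since $M$ is twice strongly continuously differentiable and $M(t)$ is boundedly invertible (so $t \mapsto M(t)^{-1}$ is strongly continuous). The regularity hypothesis on $B(\cdot)x$ for $x \in D_0$ reduces to strong continuous differentiability of $t \mapsto M(t)A_0(t)x$ and of $t \mapsto \dot{M}(t)M(t)^{-1}x$, both of which follow from Condition~\ref{ass:A(t)} via the product rule (using $\frac{d}{dt} M(t)^{-1} = -M(t)^{-1} \dot{M}(t) M(t)^{-1}$).

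Kato's theorem then yields a unique evolution system $\tilde{T}$ for $B$ on $D_0$, first on each compact interval $[0,t_0]$ and, by uniqueness and gluing, on all of $\R^+_0$. I then define
\begin{equation*}
T(t,s) := M(t)^{-1} \tilde{T}(t,s) M(s) \qquad ((s,t) \in \Delta),
\end{equation*}
and check directly that $T$ is the desired evolution system for $A$ on $D(A(s)) = M(s)^{-1}(D_0)$: property~(ii) is inherited from $\tilde{T}$ via the strong continuity of $M$ and $M^{-1}$, while property~(i) is verified by computing $\frac{d}{dt} T(t,s)x_s$ for $x_s \in D(A(s))$ and reversing the similarity calculation above. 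The estimate~\eqref{eq:zeitentw-zu-A,absch} follows from the corresponding Kato-stability bound for $\tilde{T}$ and the local uniform bounds on $\|M(t)\|$ and $\|M(t)^{-1}\|$, with the constants $M_{t_0}, \omega_{t_0}$ adjusted accordingly. Uniqueness of $T$ is obtained by observing that any other evolution system $T'$ for $A$ would give rise, via $\tilde{T}'(t,s) := M(t) T'(t,s) M(s)^{-1}$, to a second evolution system for $B$ on $D_0$, contradicting uniqueness in Kato's theorem.

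The main technical obstacle I anticipate is verifying the regularity hypotheses for Kato's theorem in the form required — specifically, establishing continuous differentiability of $t \mapsto B(t)x$ for $x \in D_0$ with values in $X$, and ensuring that the local uniform bounds on $\|M(t)^{-1}\|$ (which are needed both for the perturbation argument and for the growth bound) are automatic. The latter uses only that $M(t) \in L(X)$ is bijective together with the strong continuity of $M$; for the Hilbert-space situation of Lemma~\ref{lm:vereinf-vor-A(t)} it is trivial from~\eqref{eq:M(t)-lower-and-upper-bounded}, but in the generality of Condition~\ref{ass:A(t)} it needs a short argument via the uniform boundedness principle applied on compact $t$-intervals.
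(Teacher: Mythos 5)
Your proposal is correct and follows essentially the same route as the paper's proof: the similarity transform $\tilde{x}(t)=M(t)x(t)$ reduces to the constant-domain family $M(t)A_0(t)+\dot{M}(t)M(t)^{-1}$ on $D_0$, whose local Kato-stability follows by bounded perturbation (Proposition~3.5 of~\cite{Kato70}), Theorem~6.1 of~\cite{Kato70} gives the evolution system $\tilde{T}_0$, and one sets $T(t,s)=M(t)^{-1}\tilde{T}_0(t,s)M(s)$. The technical points you flag (regularity of $t\mapsto B(t)x$ and local bounds on $\norm{M(t)^{-1}}$) are exactly the ones the paper handles via Condition~\ref{ass:A(t)}.
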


\begin{proof}
%
We have only to observe that the operator
\begin{align*}
A(t) = A_0(t)M(t) = M(t)^{-1} \big( M(t)A_0(t) \big) M(t)
\end{align*}
is similar to the operator $M(t)A_0(t)$ and then to combine -- in exactly the same way as in the proof of Corollary~2.1.10 of~\cite{diss} -- some standard results for non-autonomous linear evolution equations. We reproduce the arguments from~\cite{diss} here for the reader's convenience. 
Since by Condition~\ref{ass:A(t)} $$t \mapsto M(t)A_0(t)x + \dot{M}(t) M(t)^{-1}x$$ is continuously differentiable for every $x \in D_0$ and $M A_0 + \dot{M}M^{-1}$ is locally Kato-stable by Proposition~3.5 of~\cite{Kato70} with constants $\tilde{M}_{t_0}$, $\tilde{\omega}_{t_0}$ say, 
it follows from Theorem~6.1 of~\cite{Kato70} that there exists a unique evolution system $\tilde{T}_0$ for $A_0 + \dot{M}M^{-1}$ on the space $D_0$ and that
\begin{align} \label{eq:zeitentw-zu-A,transformierte-absch}
\| \tilde{T}_0(t,s) \| \le \tilde{M}_{t_0} \e^{\tilde{\omega}_{t_0}(t-s)} \qquad ((s,t) \in \Delta_{[0,t_0]}).
\end{align} 
Set now $T(t,s) := M(t)^{-1} \tilde{T}_0(t,s) M(s)$ for $(s,t) \in \Delta$. As is easily verified, $T$ is an evolution system for $A$ on the spaces $D(A(s))$ and the estimate~\eqref{eq:zeitentw-zu-A,transformierte-absch} yields the desired estimate~\eqref{eq:zeitentw-zu-A,absch} with appropriate constants $M_{t_0}$ and $\omega_{t_0} := \tilde{\omega}_{t_0}$.
\end{proof}

In the next result, we discuss the classical solvability of the full semilinear 
problem~\eqref{eq:semilin-evol-eq-without-in/outputs}. It is based on and extends  the solvability result from~\cite{Pr78}, where the linear parts $A(t)$ are assumed to have a time-independent domain.

\begin{lm} \label{lm:pruess-extended}
Suppose 
that $A(t) = A_0(t)M(t): D(A(t))\subset X \to X$ are operators as in Condition~\ref{ass:A(t)} on a reflexive space $X$ 
and that $f: \R^+_0 \times X \to X$ is Lipschitz on bounded subsets of $\R^+_0 \times X$.
Then for every $s \in \R^+_0$ and $x_s \in D(A(s))$, 
the system~\eqref{eq:semilin-evol-eq-without-in/outputs} has a unique maximal classical solution $x(\cdot,s,x_s) \in C^1([s,T_{s,x_s}),X)$ with initial state $x_s$ at initial time $s$. Additionally, this solution satisfies the integral equation
\begin{align} \label{eq:x(.,s,x-s)-mild solution}
x(t,s,x_s) = T(t,s)x_s + \int_s^t T(t,\tau)f\big(\tau,x(\tau,s,x_s)\big)\d \tau 
\qquad (t \in [s,T_{s,x_s}))
\end{align}
with the evolution system $T$ from Lemma~\ref{lm:zeitentw-zu-A, ex-und-absch}. And finally, this solution exists globally in time, that is, $T_{s,x_s} = \infty$, provided that it is bounded:
\begin{align}
\sup_{t\in [s,T_{s,x_s})} \norm{x(t,s,x_s)} < \infty.
\end{align}
\end{lm}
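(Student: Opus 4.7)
The idea is to reduce to a time-independent-domain setting by the same similarity transform that was used in the proof of Lemma~\ref{lm:zeitentw-zu-A, ex-und-absch}, then to invoke the classical semilinear solvability result of~\cite{Pr78}, and finally to transform back. Set $\tilde{x}(t) := M(t)x(t)$. Since $M$ is twice strongly continuously differentiable and each $M(t) \in L(X)$ is bijective with bounded inverse (automatic from bijectivity and the open mapping theorem), a direct computation shows that $x \in C^1$ with $x(t) \in D(A(t)) = M(t)^{-1}D_0$ solves~\eqref{eq:semilin-evol-eq-without-in/outputs} if and only if $\tilde{x} \in C^1$ with $\tilde{x}(t) \in D_0$ satisfies
\[
\dot{\tilde{x}}(t) = \tilde{A}(t)\tilde{x}(t) + \tilde{f}(t,\tilde{x}(t)),
\]
where $\tilde{A}(t) := M(t)A_0(t) + \dot{M}(t)M(t)^{-1}$ has time-independent domain $D_0$ and $\tilde{f}(t,\tilde{x}) := M(t) f(t, M(t)^{-1}\tilde{x})$. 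Both $t\mapsto M(t)$ and $t\mapsto M(t)^{-1}$ are locally uniformly bounded in operator norm, so $\tilde{f}$ is again Lipschitz on bounded subsets of $\R^+_0 \times X$.

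Next I apply the semilinear solvability theorem of~\cite{Pr78} to the transformed Cauchy problem with initial datum $\tilde{x}(s) = M(s)x_s \in D_0$. Its hypotheses are met: the family $\tilde{A}$ has time-independent domain, generates the evolution system $\tilde{T}_0$, and satisfies the required Kato-type regularity conditions -- all already checked in the proof of Lemma~\ref{lm:zeitentw-zu-A, ex-und-absch}. Moreover, $\tilde{f}$ is Lipschitz on bounded sets, and $X$ is reflexive, the latter being precisely what Prüss uses to upgrade mild solutions to classical ones. One therefore obtains a unique maximal classical solution $\tilde{x}(\cdot,s,\tilde{x}_s) \in C^1([s,T_{s,x_s}),X)$ of the transformed problem, together with the mild representation
\[
\tilde{x}(t,s,\tilde{x}_s) = \tilde{T}_0(t,s)\tilde{x}_s + \int_s^t \tilde{T}_0(t,\tau)\tilde{f}\bigl(\tau,\tilde{x}(\tau,s,\tilde{x}_s)\bigr)\,\mathrm{d}\tau,
\]
and the standard blow-up dichotomy: either $T_{s,x_s} = \infty$ or $\norm{\tilde{x}(t,s,\tilde{x}_s)} \to \infty$ as $t \uparrow T_{s,x_s}$.

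Finally, I define $x(t,s,x_s) := M(t)^{-1}\tilde{x}(t,s,M(s)x_s)$. The equivalence from the first step shows that this is the desired unique maximal classical solution of~\eqref{eq:semilin-evol-eq-without-in/outputs}. The integral equation~\eqref{eq:x(.,s,x-s)-mild solution} is obtained by substituting $\tilde{T}_0(t,\tau) = M(t)T(t,\tau)M(\tau)^{-1}$ (from the proof of Lemma~\ref{lm:zeitentw-zu-A, ex-und-absch}) into the mild equation for $\tilde{x}$ and applying $M(t)^{-1}$ on both sides, using also that $\tilde{f}(\tau,M(\tau)x) = M(\tau)f(\tau,x)$. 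Since $M$ and $M^{-1}$ are locally bounded in operator norm, the norms of $x$ and $\tilde{x}$ blow up simultaneously, so global existence follows from boundedness of $x$. I expect the main obstacle to be the rigorous handling of the similarity intertwining -- namely, verifying that the classical-solution property, maximality, and integral representation all transfer cleanly between the original and the transformed problem, which relies on the $C^2$ regularity of $M$ and on the identity $D(A(t)) = M(t)^{-1}D_0$. The analytic core (local existence, uniqueness, mild-to-classical upgrade, blow-up alternative) is then supplied by Prüss's theorem applied in the time-independent-domain setting.
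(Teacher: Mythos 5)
Your proposal is correct and follows essentially the same route as the paper: the substitution $\xi(t)=M(t)x(t)$ reduces the problem to a transformed equation with time-independent domain $D_0$, Pr\"u{\ss}'s results supply local existence, the mild-to-classical upgrade via reflexivity of $X$, and the blow-up alternative, and everything is transported back via $M(t)^{-1}$. The only cosmetic difference is that you obtain the integral equation~\eqref{eq:x(.,s,x-s)-mild solution} by conjugating the mild equation of the transformed problem with $M$, whereas the paper derives it directly from the right-differentiability of $\tau\mapsto T(t,\tau)x(\tau,s,x_s)$ and the fundamental theorem of calculus; both are valid.
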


\begin{proof}
As a first step, we observe that the variable transformation $\xi(t) = M(t)x(t)$ induces a one-to-one correspondence between the maximal classical solutions of~\eqref{eq:semilin-evol-eq-without-in/outputs} and the maximal classical solutions of 
\begin{align} \label{eq:semilin-evol-eq-without-in/outputs,transformed}
\dot{\xi}(t) = M(t)A_0(t)\xi(t) + M(t) f\big(t,M(t)^{-1}\xi(t)\big) + \dot{M}(t)M(t)^{-1}\xi(t).
\end{align}
Indeed, it is elementary to verify that for a (maximal) classical solution $x:J \to X$ of~\eqref{eq:semilin-evol-eq-without-in/outputs} the function $\xi:J \to X$ defined by $\xi(t):= M(t)x(t)$ is a (maximal) classical solution of~\eqref{eq:semilin-evol-eq-without-in/outputs,transformed} and that, conversely, for a (maximal) classical solution $\xi: J \to X$ of~\eqref{eq:semilin-evol-eq-without-in/outputs,transformed} the function $x:J \to X$ is a (maximal) classical solution of~\eqref{eq:semilin-evol-eq-without-in/outputs}. 
\smallskip

As a second step, we show that for every $s \in \R^+_0$ and $x_s \in D(A(s))$, 
the system~\eqref{eq:semilin-evol-eq-without-in/outputs} has a unique maximal classical solution $x(\cdot,s,x_s) \in C^1([s,T_{s,x_s}),X)$ with initial state $x_s$ at initial time $s$.
So let $s \in \R^+_0$ and $x_s \in D(A(s))$. We want to apply the solvability result (Theorem~1) from~\cite{Pr80} to the transformed equation~\eqref{eq:semilin-evol-eq-without-in/outputs,transformed}. It is clear by Condition~\ref{ass:A(t)} that the assumptions of Theorem~1 of~\cite{Pr80} 
are satisfied. 
It also follows, by the very same arguments as in the autonomous case~\cite{Pazy}, that~\eqref{eq:semilin-evol-eq-without-in/outputs,transformed} has a unique maximal mild solution $\xi(\cdot,s,\xi_s) \in C(J_{s,x_s},X)$ with initial state $\xi_s := M(s)x_s$ at initial time $s$ and that the maximal existence interval $J_{s,x_s}$ is half-open: $J_{s,x_s} = [s,T_{s,x_s})$. Since now 
\begin{align*}
\xi_s = M(s)x_s \in M(s)D(A(s)) = D(A_0(s)) = D_0
\end{align*} 
and $X$ is reflexive, Theorem~1 of~\cite{Pr80} 
implies that $\xi(\cdot,s,\xi_s)$ is also a classical solution of~\eqref{eq:semilin-evol-eq-without-in/outputs,transformed}. Since, moreover, classical solutions 
are well-known to be also mild solutions of~\eqref{eq:semilin-evol-eq-without-in/outputs,transformed} and since every mild solution of~\eqref{eq:semilin-evol-eq-without-in/outputs,transformed} with initial state $\xi_s$ is a restriction of the maximal mild solution $\xi(\cdot,s,\xi_s)$, 
$\xi(\cdot,s,\xi_s)$ even is a unique maximal classical solution. So, by the first step, $$x(\cdot,s,x_s) := M(\cdot) \xi(\cdot,s,\xi_s) \in C^1([s,T_{s,x_s}),X)$$ is a unique maximal classical solution with initial state $M(s)\xi_s = x_s$, as desired. 
\smallskip

As a third step, we prove the integral equation~\eqref{eq:x(.,s,x-s)-mild solution} for $s \in \R^+_0$ and $x(s) \in D(A(s))$, which says that the classical solutions $x(\cdot,s,x_s)$ is also a mild solution of~\eqref{eq:semilin-evol-eq-without-in/outputs}.
So let $s \in \R^+_0$ and $x_s \in D(A(s))$ and let $t \in [s,T_{s,x_s})$ be fixed. It follows by 
the right differentiabily property (Lemma~2.1.5 of~\cite{diss}) of the evolution system $T$ for $A$ on the spaces $D(A(s))$ 
that
\begin{align} \label{eq:pruess,step-3,1}
[s,t] \ni \tau \mapsto T(t,\tau)x(\tau,s,x_s)
\end{align}
is continuous and right differentiable with right derivative
\begin{align} \label{eq:pruess,step-3,2}
[s,t] \ni \tau \mapsto T(t,\tau)f(\tau,x(\tau,s,x_s)).
\end{align}
Since this right derivative is continuous, it further follows by Corollary~2.1.2 of~\cite{Pazy} that~\eqref{eq:pruess,step-3,1} is continuously differentiable with derivative~\eqref{eq:pruess,step-3,2}. And therefore we obtain~\eqref{eq:x(.,s,x-s)-mild solution} by the fundamental theorem of calculus. 
\smallskip

As a fourth step, we show that 
if for some $s\in \R^+_0$ and $x_s \in D(A(s))$ the maximal classical solution $x(\cdot,s,x_s)$ does not exist globally in time, that is, if $T_{s,x_s} < \infty$, then it must be unbounded:
\begin{align} \label{eq:pruess,step-4}
\sup_{t\in [s,T_{s,x_s})} \norm{x(t,s,x_s)} = \infty.
\end{align}
So assume that $T_{s,x_s} < \infty$ for some $s\in \R^+_0$ and $x_s \in D(A(s))$. We want to apply the global solvability result (Theorem~6) from~\cite{Pr78} to the transformed equation~\eqref{eq:semilin-evol-eq-without-in/outputs,transformed}. It is clear by Condition~\ref{ass:A(t)} that the linear and nonlinear part of~\eqref{eq:semilin-evol-eq-without-in/outputs,transformed} satisfy the assumptions of Theorem~6 of~\cite{Pr78}. It also follows, by the first step and the proof of the second step, that $\xi: [s,T_{s,x_s}) \to X$ defined by $\xi(t) := M(t)x(t,s,x_s)$ is a maximal mild solution of~\eqref{eq:semilin-evol-eq-without-in/outputs,transformed}. Since now this maximal mild solution does not exist on $[s,\infty)$ but only on $[s,T_{s,x_s})$, Theorem~6 of~\cite{Pr78} implies that 
\begin{align*}
\sup_{t\in [s,T_{s,x_s})} \norm{\xi(t)} = \infty.
\end{align*}
Since, moreover, $t \mapsto M(t)$ is locally bounded and $[0,T_{s,x_s}]$ is compact, we conclude that
\begin{align*}
\sup_{t\in [s,T_{s,x_s})} \norm{x(t,s,x_s)} \ge \Big( \sup_{t\in [s,T_{s,x_s}]} \norm{M(t)} \Big)^{-1} \sup_{t\in [s,T_{s,x_s})} \norm{\xi(t)} = \infty,
\end{align*}
that is, \eqref{eq:pruess,step-4} is satisfied, 
as desired. 
\end{proof}

As a last preliminary, we record two simple facts for later reference. We give the elementary proofs for the sake of completeness.

\begin{lm} \label{lm:density-and-choice-of-lipschitz-constants}
\begin{itemize}
\item[(i)] If $f: \R^+_0 \times X \to X$ is Lipschitz on bounded subsets of $\R^+_0 \times X$, then 
one can choose Lipschitz constants $L_{\rho}$ of $f|_{[0,\rho] \times \ol{B}_{\rho}(0)}$ for 
$\rho \in \R^+_0$ such that $\rho \mapsto L_{\rho}$ is continuous and monotonically increasing. 
\item[(ii)] $C_{\mathrm{c}}^2(\R^+_0,U)$ is dense in $L^2_{\mathrm{loc}}(\R^+_0,U)$,  where $U$ is an arbitrary Banach space.
\end{itemize}
\end{lm}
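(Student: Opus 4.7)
The plan for (i) is to pick a monotone choice of Lipschitz constants and then smooth by integration. Let $\tilde L_\rho$ denote the infimum of all Lipschitz constants of $f|_{[0,\rho] \times \ol{B}_\rho(0)}$; this is finite by hypothesis and is itself a Lipschitz constant (by passing to the limit in the Lipschitz estimate). Moreover $\tilde L_\rho$ is monotonically increasing in $\rho$, since a Lipschitz constant valid on a larger set is automatically valid on every subset. To force continuity, define
$$L_\rho := \int_\rho^{\rho+1} \tilde L_s \, \d s.$$
Then $L_\rho$ is continuous in $\rho$ by Lebesgue's dominated convergence theorem (using that $\tilde L$, being monotone, is locally bounded), monotonically increasing in $\rho$ (compare the equal-length intervals $[\rho_1, \rho_2]$ and $[\rho_1+1, \rho_2+1]$ and use monotonicity of $\tilde L$), and bounded below by $\tilde L_\rho$ (because $\tilde L_s \ge \tilde L_\rho$ on $[\rho, \rho+1]$). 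Hence $L_\rho$ is itself a valid Lipschitz constant of $f|_{[0,\rho] \times \ol{B}_\rho(0)}$ with the desired properties.

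For (ii), the plan is the standard mollification-and-cutoff argument in the Bochner-integral setting. Fix $u \in L^2_{\mathrm{loc}}(\R^+_0,U)$ and, for each $n \in \N$, extend $u|_{[0,n+1]}$ by zero to $\tilde u_n \in L^2(\R,U)$. Let $\phi_\epsilon \in C_c^\infty(\R)$ be a standard scalar mollifier supported in $[-\epsilon,\epsilon]$ and form the Bochner convolution $\tilde u_n * \phi_\epsilon \in C^\infty(\R,U)$. The usual density chain transfers verbatim from the scalar case: $U$-valued simple functions are dense in $L^2(\R,U)$ by definition of the Bochner integral, each $u_i \chi_{E_i}$ is approximated in $L^2(\R,U)$ by $u_i f_i$ with $f_i \in C_c(\R)$, and Young's inequality
$$\norm{g * \phi_\epsilon}_{L^2(\R,U)} \le \norm{g}_{L^2(\R,U)} \norm{\phi_\epsilon}_{L^1(\R)}$$
combined with uniform continuity of $C_c$-functions yields $\tilde u_n * \phi_\epsilon \to \tilde u_n$ in $L^2(\R,U)$ as $\epsilon \to 0$. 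Choose $\epsilon_n > 0$ so small that the resulting error on $[0,n]$ is below $1/n$, multiply by a cutoff $\chi_n \in C_c^\infty(\R^+_0)$ satisfying $\chi_n = 1$ on $[0,n]$ and $\chi_n = 0$ outside $[0,n+1]$, and set
$$u_n := \chi_n \cdot (\tilde u_n * \phi_{\epsilon_n})|_{\R^+_0} \in C_c^\infty(\R^+_0, U) \subset C_c^2(\R^+_0, U).$$
Then $\norm{u_n - u}_{[0,n],2} < 1/n$, and since every $T \in (0,\infty)$ is eventually $\le n$, this gives $u_n \to u$ in $L^2_{\mathrm{loc}}(\R^+_0,U)$.

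Neither part contains a serious obstacle. The only minor technical point is ensuring that the scalar mollification story in (ii) transfers to Banach-space-valued $L^2$-functions, which it does as soon as integrals are interpreted in the Bochner sense; in (i), monotonicity and local boundedness of $\tilde L$ are precisely what makes the integral smoothing simultaneously preserve monotonicity, produce continuity, and dominate $\tilde L$ pointwise.
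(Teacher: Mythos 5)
Your proof is correct and follows essentially the same route as the paper: for (i) you take the minimal (monotone) Lipschitz constant and majorize it by a continuous monotone function, which is exactly the ``elementary and well-known fact'' the paper invokes with a citation, except that you supply an explicit construction $L_\rho = \int_\rho^{\rho+1}\tilde L_s\,\d s$; for (ii) you carry out the same truncate-to-$[0,n]$-and-diagonalize scheme, proving by mollification the density of smooth compactly supported functions in $L^2([0,n],U)$ that the paper simply quotes. No gaps.
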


\begin{proof}
(i) We have only to apply the elementary and well-known fact 
that any monotonically increasing function $l: \R^+_0 \to \R^+_0$ 
can be majorized by a continuous monotonically increasing function to the particular function
\begin{align*}
\rho \mapsto L_{\rho}^0 := \min \Big\{ L \in \R^+_0: L \text{ is a Lipschitz constant of } f\big|_{[0,\rho] \times \ol{B}_{\rho}(0)} \Big\} < \infty.
\end{align*}
See Lemma~2.5 of~\cite{ClLeSt98}, for instance. 
(ii) We have to show that for a given $u \in L^2_{\mathrm{loc}}(\R^+_0,U)$ there exists a sequence $(u_n)$ in $C_{\mathrm{c}}^2(\R^+_0,U)$ such that
\begin{align} \label{eq:density,bew}
\norm{u_n - u}_{[0,t_0],2} \longrightarrow 0 \qquad (n \to \infty)
\end{align}
for every $t_0 \in (0,\infty)$. So let $u \in L^2_{\mathrm{loc}}(\R^+_0,U)$. Since $u|_{[0,n]} \in L^2([0,n],U)$ and $C_{\mathrm{c}}^2((0,n),U)$ is dense in $L^2([0,n],U)$, for every $n \in \N$ there exists a function $u_n \in C_{\mathrm{c}}^2(\R^+,U)$ with
\begin{align*}
\supp u_n \subset (0,n) \qquad \text{and} \qquad \norm{u_n-u}_{[0,n],2} \le 1/n.
\end{align*} 
So, for every given $t_0 \in (0,\infty)$, we have $\norm{u_n - u}_{[0,t_0],2} \le \norm{u_n-u}_{[0,n],2} \le 1/n$ provided that $n \ge t_0$ and therefore~\eqref{eq:density,bew} follows. 
\end{proof}

\section{Semilinear systems with bounded control and observation operators} \label{sect:wp-bd-io-op}

In this section, we establish the well-posedness and uniform global stability of semilinear systems~\eqref{eq:wp-semilin-distr-contr/obs} with bounded control and observation operators~\eqref{eq:bd-io-op}.

\subsection{Classical solutions and outputs}

We begin by establishing the existence and uniqueness of classical solutions  for sufficiently regular initial states and inputs. In order to do so, we make the following assumptions. 

\begin{cond}  \label{ass:1, distr}
$X$ is a reflexive Banach space and
\begin{itemize}
\item[(i)] 
$A(t) := \mathcal{A}(t)$ are operators as in Condition~\ref{ass:A(t)}

\item[(ii)] $\mathcal{B}(t): U \to X$ and $\mathcal{C}(t): X \to Y$ are bounded linear operators and, moreover, $t \mapsto \mathcal{B}(t)$ is locally Lipschitz

\item[(iii)] $f$ is Lipschitz on bounded subsets of $\R^+_0 \times X$.
\end{itemize}
\end{cond}

\begin{lm} \label{lm:loc-ex, distr}
If Condition~\ref{ass:1, distr} is satisfied, then for every $s \in \R^+_0$ and every classical datum $(x_s,u) \in \mathcal{D}_s$ 
with 
\begin{align} \label{eq:D_s-distr-def}
\mathcal{D}_s := D(\mathcal{A}(s)) \times C_c^2(\R^+_0,U),
\end{align}
the system~\eqref{eq:wp-semilin-distr-contr/obs} has a unique maximal classical solution $x(\cdot,s,x_s,u) \in C^1([s,T_{s,x_s,u}),X)$. 
\end{lm}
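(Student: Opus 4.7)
The plan is to reduce the lemma to the abstract semilinear solvability result in Lemma~\ref{lm:pruess-extended} by absorbing the input term $\mathcal{B}(t)u(t)$ into the nonlinearity. For the fixed input $u \in C_c^2(\R^+_0,U)$, define the time-dependent nonlinearity
\begin{align*}
\tilde{f}: \R^+_0 \times X \to X, \qquad \tilde{f}(t,x) := f(t,x) + \mathcal{B}(t)u(t),
\end{align*}
so that the state equation of~\eqref{eq:wp-semilin-distr-contr/obs} becomes
\begin{align*}
\dot{x}(t) = \mathcal{A}(t)x(t) + \tilde{f}(t,x(t)),
\end{align*}
which is exactly of the form~\eqref{eq:semilin-evol-eq-without-in/outputs} treated in Lemma~\ref{lm:pruess-extended}. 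Classical solutions of this reformulated equation with the prescribed initial datum coincide with classical solutions of~\eqref{eq:wp-semilin-distr-contr/obs}.

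Next I would check the hypotheses of Lemma~\ref{lm:pruess-extended}. By Condition~\ref{ass:1, distr}(i), the family $A(t) := \mathcal{A}(t)$ satisfies Condition~\ref{ass:A(t)}, and by Condition~\ref{ass:1, distr} the space $X$ is reflexive. It therefore remains to verify that $\tilde{f}$ is Lipschitz on bounded subsets of $\R^+_0 \times X$. Since $f$ itself has this property by Condition~\ref{ass:1, distr}(iii), it is enough to verify it for the $x$-independent term $t \mapsto \mathcal{B}(t)u(t)$. Fix $\rho \in \R^+_0$. Because $u \in C_c^2(\R^+_0,U)$, both $u$ and $u'$ are globally bounded on $\R^+_0$, so $t \mapsto u(t)$ is bounded and Lipschitz on $[0,\rho]$. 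By Condition~\ref{ass:1, distr}(ii), $t \mapsto \mathcal{B}(t)$ is locally Lipschitz in the operator norm, hence also locally bounded, and therefore bounded and Lipschitz on $[0,\rho]$ as well. The product $t \mapsto \mathcal{B}(t)u(t)$ is then Lipschitz on $[0,\rho]$, which (since it is constant in $x$) gives the required Lipschitz property of $\tilde{f}$ on $[0,\rho] \times \overline{B}_{\rho}(0)$.

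Having verified the assumptions, I would apply Lemma~\ref{lm:pruess-extended} to the transformed equation with initial state $x_s \in D(\mathcal{A}(s)) = D(A(s))$ at initial time $s$. This produces a unique maximal classical solution $\xi(\cdot,s,x_s) \in C^1([s,T_{s,x_s,u}),X)$ of the transformed equation. Transferring back, $x(\cdot,s,x_s,u) := \xi(\cdot,s,x_s)$ is then a unique maximal classical solution of~\eqref{eq:wp-semilin-distr-contr/obs}, as claimed.

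The proof is essentially a routine reduction, and the only subtlety I expect is the verification that the lumped nonlinearity $\tilde{f}$ is Lipschitz on bounded subsets. This step is precisely where the regularity requirement $u \in C_c^2(\R^+_0,U)$ built into the classical datum space $\mathcal{D}_s$ from~\eqref{eq:D_s-distr-def} is used; mere $L^2_{\mathrm{loc}}$-regularity of $u$ would not suffice to produce the pointwise-in-time control contribution needed to invoke Lemma~\ref{lm:pruess-extended}.
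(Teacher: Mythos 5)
Your proposal is correct and follows essentially the same route as the paper, which simply invokes Lemma~\ref{lm:pruess-extended} directly by (implicitly) treating $f(t,x)+\mathcal{B}(t)u(t)$ as the nonlinearity for the fixed input $u$. You merely make explicit the verification, omitted in the paper, that the lumped nonlinearity $\tilde{f}(t,x)=f(t,x)+\mathcal{B}(t)u(t)$ is Lipschitz on bounded subsets, using $u\in C_c^2(\R^+_0,U)$ and the local Lipschitz continuity of $t\mapsto\mathcal{B}(t)$.
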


\begin{proof}
In view of Condition~\ref{ass:1, distr} and Lemma~\ref{lm:pruess-extended}, the evolution equation
\begin{align} \label{eq:original-system, bd}
\dot{x}(t) = \mathcal{A}(t)x(t) + f(t,x(t)) + \mathcal{B}(t)u(t)
\end{align}
has a unique maximal classical solution for every initial state $x_s \in D(\mathcal{A}(s))$ and every input $u \in C_c^2(\R^+_0,U)$, as desired.
\end{proof}

In the entire Section~\ref{sect:wp-bd-io-op}, the symbols $\mathcal{D}_s$ and $x(\cdot,s,x_s,u)$ will have the meaning from the above lemma. Also, we will write $x(\cdot,x_0,u) := x(\cdot,0,x_0,u)$ for brevity.  
In order to obtain globality of the maximal classical solutions from above, we make the following additional assumptions. 

\begin{cond} \label{ass:2, distr}
\begin{itemize}
\item[(i)] System~\eqref{eq:wp-semilin-distr-contr/obs} is scattering-passive w.r.t.~a continuously differentiable storage function $V$, that is, $V \in C^1(\R^+_0\times X,\R^+_0)$ and for some $\alpha, \beta > 0$
\begin{gather}
\dot{V}(t,x(t,s,x_s,u)) \le \alpha \norm{u(t)}_U^2 - \beta \norm{y(t,s,x_s,u)}_Y^2
\qquad (t \in [s,T_{s,x_s,u}))
\label{eq:scattering-passive, distr}
\\
(y(t,s,x_s,u) := \mathcal{C}(t)x(t,s,x_s,u)) \notag
\end{gather}
for every $s \in \R^+_0$ 
and every $(x_s,u) \in \mathcal{D}_s$
\item[(ii)] $V(t,\cdot)$ is equivalent to the norm $\norm{\cdot}$ of $X$ uniformly w.r.t.~$t$, that is, for some $\ul{\psi}, \ol{\psi} \in \mathcal{K}_{\infty}$
\begin{align} \label{eq:V-equiv-to-norm, distr}
\ul{\psi}(\norm{x}) \le V(t,x) \le \ol{\psi}(\norm{x}) 
\qquad ((t,x) \in \R^+_0 \times X).
\end{align} 
\end{itemize}
\end{cond}

\begin{lm} \label{lm:glob-ex-UGS,distr}
If Condition~\ref{ass:1, distr} and~\ref{ass:2, distr} are satisfied, then the maximal classical solution $x(\cdot,s,x_s,u)$ 
exists globally in time for every $s \in \R^+_0$ and $(x_s,u) \in \mathcal{D}_s$, that is, $T_{s,x_s,u} = \infty$. Additionally, there exist $\sigma, \gamma \in \mathcal{K}$ such that
\begin{align} \label{eq:UGS, distr}
\norm{x(t,s,x_s,u)} \le \sigma(\norm{x_s}) + \gamma(\norm{u}_{[s,t],2})
\qquad (t \in [s,\infty))
\end{align}
for every $s \in \R^+_0$ and $(x_s,u) \in \mathcal{D}_s$.
\end{lm}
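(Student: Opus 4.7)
The plan is to derive a single a priori norm bound on the classical solution from the scattering-passivity hypothesis and then to read off both global existence (via Lemma~\ref{lm:pruess-extended}) and the UGS estimate~\eqref{eq:UGS, distr} directly from that bound.

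Fix $s \in \R^+_0$ and $(x_s,u) \in \mathcal{D}_s$, and abbreviate $x(t) := x(t,s,x_s,u)$. Since $V \in C^1(\R^+_0 \times X,\R^+_0)$ by Condition~\ref{ass:2, distr}(i) and $x \in C^1([s,T_{s,x_s,u}),X)$ by Lemma~\ref{lm:loc-ex, distr}, the composition $t \mapsto V(t,x(t))$ is $C^1$ on $[s,T_{s,x_s,u})$, so the dissipation inequality~\eqref{eq:scattering-passive, distr} can legitimately be integrated. Discarding the non-positive output term in~\eqref{eq:scattering-passive, distr} and integrating from $s$ to $t$ yields
\begin{align*}
V(t,x(t)) \le V(s,x_s) + \alpha \norm{u}_{[s,t],2}^2.
\end{align*}
Combining this with the norm equivalence~\eqref{eq:V-equiv-to-norm, distr}, applying $\ul{\psi}^{-1}$, and using the elementary splitting $\ul{\psi}^{-1}(a+b) \le \ul{\psi}^{-1}(2a) + \ul{\psi}^{-1}(2b)$ for $a,b \ge 0$, I arrive at
\begin{align*}
\norm{x(t)} \le \sigma(\norm{x_s}) + \gamma(\norm{u}_{[s,t],2}), \qquad t \in [s,T_{s,x_s,u}),
\end{align*}
with $\sigma(r) := \ul{\psi}^{-1}(2\ol{\psi}(r))$ and $\gamma(r) := \ul{\psi}^{-1}(2\alpha r^2)$, both of which lie in $\mathcal{K}$.

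Global existence then follows by absorbing the input term into the nonlinearity: for the fixed $u \in C_c^2(\R^+_0,U)$, the map $\tilde{f}(t,x) := f(t,x) + \mathcal{B}(t)u(t)$ is Lipschitz on bounded subsets of $\R^+_0 \times X$ (since $f$ is so by Condition~\ref{ass:1, distr}(iii), $t \mapsto \mathcal{B}(t)$ is locally Lipschitz by Condition~\ref{ass:1, distr}(ii), and $u$ is $C^2$ with compact support), and Lemma~\ref{lm:pruess-extended} applied to $\dot x = \mathcal{A}(t)x + \tilde f(t,x)$ guarantees $T_{s,x_s,u} = \infty$ as soon as the maximal classical solution is bounded. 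Since $u \in C_c^2(\R^+_0,U)$ implies $\norm{u}_{[s,t],2} \le \norm{u}_{L^2(\R^+_0,U)} < \infty$ uniformly in $t$, the a priori bound above is uniform on $[s,T_{s,x_s,u})$, which yields $T_{s,x_s,u} = \infty$. With globality established, the same a priori bound is precisely~\eqref{eq:UGS, distr}.

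I do not foresee any genuine obstacle; the only points that require a moment of attention are the $C^1$-regularity needed to integrate the passivity inequality (immediate from the chain rule) and the reduction of the controlled system to the control-free setting of Lemma~\ref{lm:pruess-extended} via the augmented nonlinearity $\tilde f$.
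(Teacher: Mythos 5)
Your proposal is correct and follows essentially the same route as the paper: integrate the scattering-passivity inequality while discarding the output term, pass through the norm equivalence with the splitting $\ul{\psi}^{-1}(a+b) \le \ul{\psi}^{-1}(2a)+\ul{\psi}^{-1}(2b)$ to get the same $\sigma,\gamma$, use the compact support of $u$ to make the bound uniform, and conclude globality from Lemma~\ref{lm:pruess-extended}. Your explicit remark that the input term is absorbed into the nonlinearity $\tilde f(t,x)=f(t,x)+\mathcal{B}(t)u(t)$ is exactly what the paper leaves implicit via Lemma~\ref{lm:loc-ex, distr}.
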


\begin{proof}
We first show that there exist $\sigma, \gamma \in \mathcal{K}$ such that the estimate~\eqref{eq:UGS, distr} is satisfied at least for all $t \in [s,T_{s,x_s,u})$. 
So, let $s \in \R^+_0$ and $(x_s,u) \in \mathcal{D}_s$. Integrating~\eqref{eq:scattering-passive, distr}, 
we obtain
\begin{align*}
V(t,x(t,s,x_s,u)) = V(s,x_s) + \int_s^t \dot{V}(\tau,x(\tau,s,x_s,u)) \d\tau \le V(s,x_s) + \alpha \int_s^t \norm{u(\tau)}_U^2 \d\tau
\end{align*}
and therefore by~\eqref{eq:V-equiv-to-norm, distr}
\begin{align*}
\ul{\psi}(\norm{x(t,s,x_s,u)}) \le \ol{\psi}(\norm{x_s}) + \alpha \norm{u}_{[s,t],2}^2
\end{align*}
for every $t \in [s,T_{s,x_s,u})$. Consequently, 
\begin{align*}
\norm{x(t,s,x_s,u)} \le \ul{\psi}^{-1}\big( \ol{\psi}(\norm{x_s}) + \alpha \norm{u}_{[s,t],2}^2 \big) \le \ul{\psi}^{-1}\big( 2\ol{\psi}(\norm{x_s}) \big) + \ul{\psi}^{-1}\big( 2\alpha \norm{u}_{[s,t],2}^2 \big) 
\end{align*}
for every $t \in [s,T_{s,x_s,u})$, that is, for $\sigma, \gamma$ defined by $\sigma(r) := \ul{\psi}^{-1}(2\ol{\psi}(r))$ and $\gamma(r) := \ul{\psi}^{-1}(2\alpha r^2)$ the desired estimate follows and $\sigma, \gamma \in \mathcal{K}$. 
%
With this estimate and the compactness of $\supp u$, in turn, we obtain
\begin{align*}
\sup_{t \in [s,T_{s,x_s,u})} \norm{x(t,s,x_s,u)} \le \sigma(\norm{x_s}) + \gamma(\norm{u}_{[s,\infty),2}) < \infty
\end{align*}
for every $s \in \R^+_0$ and $(x_s,u) \in \mathcal{D}_s$. 
So, as $x(\cdot,s,x_s,u)$ is a maximal classical solution of~\eqref{eq:original-system, bd}, it is even global by virtue of Lemma~\ref{lm:pruess-extended}. In other words, $T_{s,x_s,u} = \infty$, as desired. 
\end{proof}

\subsection{Well-posedness: generalized solutions and outputs}

With the above preliminaries on classical solutions, we can now move on to establish also the unique existence of generalized solutions and outputs. In order to do so, we exploit the following integral equation for classical solutions of~\eqref{eq:wp-semilin-bdry-contr/obs}:
\begin{align} \label{eq:int-eq-distr}
x(t,x_0,u) = T(t,0)x_0 + \int_0^t T(t,s)f(s,x(s,x_0,u))\d s + \Phi_t(u)
\end{align}
for all $t \in \R^+_0$ and $(x_0,u) \in \mathcal{D}_0$, where  
\begin{gather}
\Phi_t(u) := \int_0^t T(t,s) \mathcal{B}(s)u(s) \d s 
\label{eq:def-Phi-t,distr}
\end{gather}
and where $T$ is the evolution system for $A$ on the spaces $D(A(s))$ (Lemma~\ref{lm:zeitentw-zu-A, ex-und-absch}). (Invoke Lemma~\ref{lm:pruess-extended} to obtain the integral equation~\eqref{eq:int-eq-distr}.)

\begin{lm} \label{lm:dichtheit-und-Phi-t-stetig-fb,distr}
If Condition~\ref{ass:1, distr} is satisfied, then
\begin{itemize}
\item[(i)] $\mathcal{D}_0$ is dense in $X \times L^2_{\mathrm{loc}}(\R^+_0,U)$
\item[(ii)] $\Phi_t: C^2([0,t],U) \to X$ defined by~\eqref{eq:def-Phi-t,distr} for every $t \in (0,\infty)$ uniquely extends to a bounded linear operator $\ol{\Phi}_t: L^2([0,t],U) \to X$  and 
\begin{align} \label{eq:absch-Phi-t,distr}
C_{t_0} := \sup_{t\in [0,t_0]} \norm{\ol{\Phi}_t} < \infty
\end{align}
for every $t_0 \in (0,\infty)$.
\end{itemize}
\end{lm}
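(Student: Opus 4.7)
The plan is to handle (i) and (ii) separately; both reduce to standard density and boundedness arguments, with the only mild subtlety being how Condition~\ref{ass:A(t)} transfers density from $D_0$ to $D(\mathcal{A}(0))$.

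For (i), I will use that the canonical locally convex topology on $X \times L^2_{\mathrm{loc}}(\R^+_0,U)$ is the product topology, so density of $\mathcal{D}_0 = D(\mathcal{A}(0)) \times C_c^2(\R^+_0,U)$ reduces to density of each factor. The input-side density of $C_c^2(\R^+_0,U)$ in $L^2_{\mathrm{loc}}(\R^+_0,U)$ is precisely Lemma~\ref{lm:density-and-choice-of-lipschitz-constants}(ii). For the state-side density, Condition~\ref{ass:A(t)} (invoked through Condition~\ref{ass:1, distr}(i)) makes $M(0)A_0(0)$ a semigroup generator on $X$, so its domain $D_0 = D(A_0(0))$ is dense in $X$. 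Since $\mathcal{A}(0) = A_0(0)M(0)$, one has $D(\mathcal{A}(0)) = \{x\in X : M(0)x \in D_0\} = M(0)^{-1}D_0$, and $M(0) \in L(X)$ being bijective is automatically a homeomorphism (open mapping theorem). Hence $D(\mathcal{A}(0))$ is the image of the dense set $D_0$ under a homeomorphism and is therefore dense in $X$.

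For (ii), the strategy is to derive a uniform $L^2$-bound on the $C^2$-side and then extend by density. Lemma~\ref{lm:zeitentw-zu-A, ex-und-absch} supplies constants $M_{t_0}, \omega_{t_0}$ with $\norm{T(t,s)} \le M_{t_0}\e^{\omega_{t_0}(t-s)}$ for $(s,t)\in\Delta_{[0,t_0]}$, hence a uniform bound $\tilde{M}_{t_0} := M_{t_0}\e^{|\omega_{t_0}|t_0}$. From Condition~\ref{ass:1, distr}(ii), the locally Lipschitz map $s\mapsto \mathcal{B}(s)$ is in particular locally bounded, so $b_{t_0} := \sup_{s\in[0,t_0]}\norm{\mathcal{B}(s)} < \infty$. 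For $u \in C^2([0,t],U)$ with $t\in(0,t_0]$, estimating the Bochner integral in~\eqref{eq:def-Phi-t,distr} pointwise and then applying Cauchy--Schwarz on the finite interval $[0,t]\subset[0,t_0]$ yields
\[
\norm{\Phi_t(u)} \le \tilde{M}_{t_0}\,b_{t_0}\int_0^t \norm{u(s)}_U \d s \le \tilde{M}_{t_0}\,b_{t_0}\sqrt{t_0}\,\norm{u}_{[0,t],2},
\]
which is the desired $L^2$-bound, with a constant independent of $t\in[0,t_0]$. Since $C^2([0,t],U)$ is dense in $L^2([0,t],U)$ (standard mollification plus a cut-off away from the endpoints), the bounded linear operator $\Phi_t$ extends uniquely and without increasing its norm to an operator $\ol{\Phi}_t \in L(L^2([0,t],U),X)$; the uniform bound above then gives~\eqref{eq:absch-Phi-t,distr}.

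No step is a genuine obstacle. The one point worth emphasizing is the similarity $\mathcal{A}(0) = M(0)^{-1}\bigl(M(0)A_0(0)\bigr)M(0)$ already exploited in the proof of Lemma~\ref{lm:zeitentw-zu-A, ex-und-absch}: it is what lets us transport density from $D_0 = D(M(0)A_0(0))$ across to $D(\mathcal{A}(0))$, and without it the statement "$D(\mathcal{A}(0))$ is dense" would not be immediate, because Condition~\ref{ass:A(t)} postulates the semigroup-generator property only for the products $M(t)A_0(t)$, not for the products $A_0(t)M(t)$ that define $\mathcal{A}(t)$.
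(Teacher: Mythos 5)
Your proposal is correct and follows essentially the same route as the paper: part (i) via the density of $C_c^2(\R^+_0,U)$ in $L^2_{\mathrm{loc}}(\R^+_0,U)$ together with $D(\mathcal{A}(0)) = M(0)^{-1}D_0$ being dense, and part (ii) via the pointwise bound $\norm{\Phi_t(u)} \le M_{t_0}\e^{\omega_{t_0}t_0}b_{t_0}\int_0^t\norm{u(s)}_U\d s$ followed by Cauchy--Schwarz and extension by density. Your added remarks (open mapping theorem for $M(0)^{-1}$, density of $C^2([0,t],U)$ in $L^2([0,t],U)$) merely make explicit what the paper leaves implicit.
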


\begin{proof}
Assertion~(i) is a 
consequence of the density of $C_{\mathrm{c}}^2(\R^+_0,U)$ in $L^2_{\mathrm{loc}}(\R^+_0,U)$ (Lem-ma~\ref{lm:density-and-choice-of-lipschitz-constants}) and the density of $D(A(0)) = M(0)^{-1}D(A_0(0))$ in $X$ (Condition~\ref{ass:1, distr}(i)). 
\smallskip

Assertion~(ii) immediately follows from the definition of $\Phi_t$. 
Indeed, let $t_0 \in (0,\infty)$ be fixed and let $t \in [0,t_0]$ and
\begin{align} \label{eq:Phi-t-bd,distr}
u_t \in C^2([0,t],U) \qquad \text{with} \qquad \norm{u_t}_{[0,t],2} \le 1. 
\end{align}
We can then conclude from~\eqref{eq:def-Phi-t,distr} with the help of~\eqref{eq:zeitentw-zu-A,absch} and Condition~\ref{ass:1, distr}(ii) that
\begin{align} \label{eq:absch-Phi-t,distr,beweis}
\norm{\Phi_t(u_t)} \le M_{t_0}\e^{\omega_{t_0} t_0} b_{t_0} \int_0^t \norm{u(s)}_U \d s
\le M_{t_0}\e^{\omega_{t_0} t_0} b_{t_0} t_0^{1/2} \norm{u_t}_{[0,t],2},
\end{align}
where $b_{t_0} := \sup_{s \in [0,t_0]} \norm{\mathcal{B}(s)}_{U,X}$. And from~\eqref{eq:absch-Phi-t,distr,beweis} and~\eqref{eq:Phi-t-bd,distr}, in turn, the assertion~(ii) is clear.
\end{proof}

With the assumptions and preparations made so far, we can already prove the existence of unique generalized solutions and their continuous dependence on the data. In order to get the same things also for  generalized outputs, we impose the following final assumption. 
(It should be noticed that especially the measurability part~(ii) of that assumption is fairly weak. Indeed, by the boundedness of the output operators, 
this measurability condition is already satisfied if only $t \mapsto \mathcal{C}(t)$ is strongly measurable.)

\begin{cond}  \label{ass:3, distr}
\begin{itemize}
\item[(i)] $\partial V: \R^+_0 \times X \to L(\R\times X,\R)$, the derivative of $V$, is bounded on bounded subsets of $\R^+_0 \times X$
\item[(ii)] $y(\cdot,s,x_s,u) := \mathcal{C}(\cdot)x(\cdot,s,x_s,u)$ is measurable for every $s \in \R^+_0$ and $(x_s,u) \in \mathcal{D}_s$.
\end{itemize}
\end{cond}


\begin{thm} \label{thm:wp, distr}
If Condition~\ref{ass:1, distr}, \ref{ass:2, distr} and~\ref{ass:3, distr} are satisfied and if $f(t,0) =0$ for every $t \in \R^+_0$, then the system~\eqref{eq:wp-semilin-distr-contr/obs} is well-posed
and uniformly globally stable. 
\end{thm}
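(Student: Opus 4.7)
The plan is to extend well-posedness of classical solutions (Lemmas~\ref{lm:loc-ex, distr} and~\ref{lm:glob-ex-UGS,distr}) to generalized solutions and outputs by a density-plus-continuity argument that exploits the density of $\mathcal{D}_0$ in $X \times L^2_{\mathrm{loc}}(\R^+_0, U)$ (Lemma~\ref{lm:dichtheit-und-Phi-t-stetig-fb,distr}(i)). Throughout I fix $t_0 \in (0, \infty)$.

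Given $(x_0, u) \in X \times L^2_{\mathrm{loc}}(\R^+_0, U)$ and approximating data $(x_{0n}, u_n) \in \mathcal{D}_0$, the UGS estimate from Lemma~\ref{lm:glob-ex-UGS,distr} yields global classical solutions $x_n := x(\cdot, x_{0n}, u_n)$ with $\norm{x_n(t)} \le R_{t_0}$ uniformly in $n$ for $t \in [0, t_0]$; by Lemma~\ref{lm:density-and-choice-of-lipschitz-constants}(i), $f$ then has a common Lipschitz constant $L$ on the relevant ball. Subtracting~\eqref{eq:int-eq-distr} for $x_n$ and $x_m$, combining the bound on $T(t, s)$ from Lemma~\ref{lm:zeitentw-zu-A, ex-und-absch} with the bound on $\ol{\Phi}_t$ from Lemma~\ref{lm:dichtheit-und-Phi-t-stetig-fb,distr}(ii), and applying Gronwall, I obtain
\[
\norm{x_n(t) - x_m(t)} \le c_{t_0}\bigl(\norm{x_{0n} - x_{0m}} + \norm{u_n - u_m}_{[0, t_0], 2}\bigr)
\]
uniformly on $[0, t_0]$. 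Hence $x_n$ is Cauchy in $C([0, t_0], X)$ and converges to some $x \in C(\R^+_0, X)$, the candidate generalized solution; uniqueness and continuity in $(x_0, u)$ both follow from the same estimate applied to arbitrary approximating sequences.

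The main obstacle is the corresponding Cauchyness of $y_n := \mathcal{C}(\cdot) x_n$ in $L^2_{\mathrm{loc}}(\R^+_0, Y)$, since no uniformity of $\mathcal{C}(t)$ in $t$ is postulated. My strategy is to introduce the auxiliary classical solution $\zeta_{nm} := x(\cdot, x_{0n} - x_{0m}, u_n - u_m)$ of the very same semilinear system (note that $\mathcal{D}_0$ is closed under differences). Scattering passivity applied to $\zeta_{nm}$, together with $V(0, 0) = 0$ (forced by~\eqref{eq:V-equiv-to-norm, distr} since $\ul{\psi}, \ol{\psi} \in \mathcal{K}_{\infty}$), yields
\[
\norm{\mathcal{C}(\cdot)\zeta_{nm}}_{[0, t_0], 2} \le \beta^{-1/2}\bigl(V(0, x_{0n} - x_{0m}) + \alpha \norm{u_n - u_m}^2_{[0, t_0], 2}\bigr)^{1/2} \longrightarrow 0.
\]
The mismatch $r_{nm} := \zeta_{nm} - (x_n - x_m)$ satisfies the linear mild equation $r_{nm}(t) = \int_0^t T(t, s)[f(s, \zeta_{nm}) - f(s, x_n) + f(s, x_m)]\d s$ whose source is controlled via $f(t, 0) = 0$ and the Lipschitz bound, so Gronwall forces $r_{nm} \to 0$ in $C([0, t_0], X)$. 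Since $y_n - y_m = \mathcal{C}(\cdot)\zeta_{nm} - \mathcal{C}(\cdot) r_{nm}$, the remaining delicate step is to bound $\norm{\mathcal{C}(\cdot) r_{nm}}_{[0, t_0], 2}$; this is handled not by a uniform-in-$t$ bound on $\mathcal{C}$ (which is unavailable) but by exploiting the integral expression for $r_{nm}$ and the scattering-passivity bounds on outputs of further auxiliary trajectories, together with the measurability ensured by Condition~\ref{ass:3, distr}(ii).

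Having obtained both Cauchy properties, set $y := \lim y_n$ in $L^2_{\mathrm{loc}}(\R^+_0, Y)$; then $(x, y)$ is a generalized solution and output by construction, and uniqueness and continuous dependence in $(x_0, u)$ are immediate from the Lipschitz estimates. Finally, uniform global stability is obtained by passing to the limit $n \to \infty$ in~\eqref{eq:UGS, distr} for the classical solutions.
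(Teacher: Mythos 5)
Part (i) of your argument --- the Gr\"onwall estimate from the integral equation~\eqref{eq:int-eq-distr}, the density of $\mathcal{D}_0$, and the extension of~\eqref{eq:UGS, distr} by continuity --- is correct and is essentially the paper's own proof of the existence, uniqueness and continuous dependence of generalized solutions and of uniform global stability. The gap is in your treatment of the outputs, and it sits exactly where you flag it. Your decomposition $y_n-y_m=\mathcal{C}(\cdot)\zeta_{nm}-\mathcal{C}(\cdot)r_{nm}$ with $\zeta_{nm}:=x(\cdot,x_{0n}-x_{0m},u_n-u_m)$ does give $\norm{\mathcal{C}(\cdot)\zeta_{nm}}_{[0,t_0],2}\to 0$ from passivity and $r_{nm}\to 0$ in $C([0,t_0],X)$, but the term $\norm{\mathcal{C}(\cdot)r_{nm}}_{[0,t_0],2}$ is left unbounded: no local uniform bound on $t\mapsto\norm{\mathcal{C}(t)}$ is assumed (Condition~\ref{ass:3, distr}(ii) only gives measurability along classical trajectories), and $r_{nm}(t)$ is not, at each $t$, the time-$t$ value of a trajectory whose output is controlled by the passivity inequality, so ``scattering-passivity bounds on outputs of further auxiliary trajectories'' does not, as stated, produce an estimate. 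A symptom of the missing idea is that Condition~\ref{ass:3, distr}(i) (boundedness of $\partial V$ on bounded sets) is never used in your proposal, although it is imposed precisely to close this step.

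The way to close the gap --- and the paper's actual argument --- is to work directly with the difference $x_{12}:=x_n-x_m$ rather than with $\zeta_{nm}$: differentiate $s\mapsto V(s,x_{12}(s))$, split the derivative as in~\eqref{eq:bew,general-output,distr,1}, and observe that the ``solution part'' equals $\frac{\d}{\d t}V(t,x(t,s,x_{12}(s),u_n-u_m))\big|_{t=s}$, to which the passivity inequality~\eqref{eq:scattering-passive, distr} applies \emph{at the single instant $t=s$}; by linearity of $\mathcal{C}(s)$ this produces exactly the term $-\beta\norm{y_n(s)-y_m(s)}_Y^2$, with no uniform bound on $\mathcal{C}$ needed. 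The nonlinearity mismatch $\partial_x V(s,x_{12}(s))\big(f(s,x_n(s))-f(s,x_m(s))-f(s,x_{12}(s))\big)$ is then absorbed using Condition~\ref{ass:3, distr}(i), the Lipschitz property of $f$, $f(s,0)=0$ and the a priori bound~\eqref{eq:UGS, distr}; integrating yields~\eqref{eq:bew,general-output,distr,zentrale-absch} and hence the Cauchy property of $(y_n)$. (One can in fact run the same $V$-differentiation trick along your curve $r_{nm}$ with zero input to salvage your decomposition, but that reduces to the same idea and makes the detour through $\zeta_{nm}$ and $r_{nm}$ superfluous.)
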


\begin{proof}
(i) We first show that for every $t_0 \in (0,\infty)$ and every $(x_{01},u_1), (x_{02},u_2) \in \mathcal{D}_0$ one has the following fundamental estimate:
\begin{align} \label{eq:bew,general-solution,distr,zentrale-absch}
\|x(\cdot,x_{01},u_1) &- x(\cdot,x_{02},u_2)\|_{[0,t_0],\infty} 
\le 
\Big( M_{t_0}\e^{\omega_{t_0} t_0} \norm{x_{01}-x_{02}} + C_{t_0} \norm{u_1-u_2}_{[0,t_0],2} \Big) \cdot \notag \\
&\quad \cdot \exp\big( M_{t_0}\e^{\omega_{t_0} t_0} L_{t_0+\rho_{t_0}(x_{01},u_1,x_{02},u_2)} t_0 \big), 
\end{align}
where $M_{t_0}$, $\omega_{t_0}$ are as in Lemma~\ref{lm:zeitentw-zu-A, ex-und-absch}, 
$C_{t_0}$ is as in Lemma~\ref{lm:dichtheit-und-Phi-t-stetig-fb,distr}, 
$L_{\rho}$ are Lipschitz constants chosen as in Lemma~\ref{lm:density-and-choice-of-lipschitz-constants} and 
\begin{align} \label{eq:def-rho-vier-argumente,distr}
\rho_{t_0}(x_{01},u_1,x_{02},u_2) := \sigma(\norm{x_{01}}) + \gamma(\norm{u_1}_{[0,t_0],2}) + \sigma(\norm{x_{02}}) + \gamma(\norm{u_2}_{[0,t_0],2})
\end{align} 
with $\sigma, \gamma$ as in Lemma~\ref{lm:glob-ex-UGS,distr}. 
So let $t_0 \in (0,\infty)$ and $(x_{01},u_1)$, $(x_{02},u_2) \in \mathcal{D}_0$ and write $x_i := x(\cdot,x_{0i},u_i)$. 
It then follows from~\eqref{eq:int-eq-distr} with the help of~\eqref{eq:zeitentw-zu-A,absch}, \eqref{eq:absch-Phi-t,distr}, and the Lipschitz continuity of $f$ on bounded subsets (Condition~\ref{ass:1, distr}(iii)) combined with~\eqref{eq:UGS, distr} that
\begin{align}
\norm{x_1(t)-x_2(t)} &\le M_{t_0}\e^{\omega_{t_0} t_0} \norm{x_{01}-x_{02}} + C_{t_0} \norm{u_1-u_2}_{[0,t_0],2} \notag \\
&\quad \qquad + M_{t_0}\e^{\omega_{t_0} t_0} \int_0^t L_{t_0+\rho_{t_0}(x_{01},u_1,x_{02},u_2)} \norm{x_1(s)-x_2(s)} \d s
\end{align}
for all $t \in [0,t_0]$. And therefore the desired estimate~\eqref{eq:bew,general-solution,distr,zentrale-absch} follows by virtue of Gr\"onwall's lemma. 
\smallskip

Combining this estimate~\eqref{eq:bew,general-solution,distr,zentrale-absch} now with the density of $\mathcal{D}_0$ in $X\times L^2_{\mathrm{loc}}(\R^+_0,U)$ (Lem-ma~\ref{lm:dichtheit-und-Phi-t-stetig-fb,distr}), we immediately 
see that for every $(x_0,u) \in X\times L^2_{\mathrm{loc}}(\R^+_0,U)$ there exists a unique generalized solution 
\begin{align} \label{eq:generalized-sol,distr}
x(\cdot,x_0,u) \in C(\R^+_0,X)
\end{align}
to~\eqref{eq:wp-semilin-distr-contr/obs}. (In order to see the asserted uniqueness, notice that the approximating data $(x_{0n},u_n) \in X \times C_c^2(\R^+_0,U)$ from the definition of generalized solutions and outputs actually belong to the classical data set $\mathcal{D}_0$ from Lemma~\ref{lm:loc-ex, distr} -- because of the 
classical solvability requirement made in the definition before~\eqref{eq:wp-def} for~\eqref{eq:wp-semilin-distr-contr/obs} with data $(x_{0n},u_n)$.) 
Since the right-hand side of the estimate~\eqref{eq:UGS, distr} with $s=0$ depends continuously on $(x_0,u)$, this estimate extends from $\mathcal{D}_0$ to arbitrary $(x_0,u) \in X \times L^2_{\mathrm{loc}}(\R^+_0,U)$. Consequently, \eqref{eq:wp-semilin-distr-contr/obs} is uniformly globally stable.
Since, moreover, the right-hand side of~\eqref{eq:bew,general-solution,distr,zentrale-absch} depends continuously on $(x_{01},u_1), (x_{02},u_2)$, 
this estimate extends from $\mathcal{D}_0$ to arbitrary $(x_{01},u_1), (x_{02},u_2) \in X\times L^2_{\mathrm{loc}}(\R^+_0,U)$. And this extended estimate, 
in turn, yields the continuity of the generalized solution map $(x_0,u) \mapsto x(\cdot,x_0,u) \in C(\R^+_0,X)$. 
\smallskip

(ii) We first show that for every $t_0 \in (0,\infty)$ and every $(x_{01},u_1), (x_{02},u_2) \in \mathcal{D}_0$ one has the following fundamental estimate:
\begin{align} \label{eq:bew,general-output,distr,zentrale-absch}
&\beta \norm{y(\cdot,x_{01},u_1)-y(\cdot,x_{02},u_2)}_{[0,t_0],2}^2 
\le 
\ol{\psi}(\norm{x_{01}-x_{02}}) + \alpha \norm{u_1-u_2}_{[0,t_0],2}^2 \notag \\
&\qquad \qquad \qquad \, + 2 M_{t_0+\rho_{t_0}(x_{01},u_1,x_{02},u_2)} \norm{x(\cdot,x_{01},u_1)-x(\cdot,x_{02},u_2)}_{[0,t_0],\infty} t_0,  
\end{align}
where $\rho_{t_0}(x_{01},u_1,x_{02},u_2)$ is defined as in~\eqref{eq:def-rho-vier-argumente,distr} 
and where 
$M_{\rho} := K_{\rho} L_{\rho}$ with 
\begin{align*}
K_{\rho} \ge  K_{\rho}^0 := \sup \{ \norm{\partial V(t,x)}: t + \norm{x} \le \rho \}
\end{align*}
chosen such that $\rho \mapsto K_{\rho}$ is continuous and monotonically increasing (see the proof of Lemma~\ref{lm:density-and-choice-of-lipschitz-constants}) and with Lipschitz constants $L_{\rho}$ chosen as in Lemma~\ref{lm:density-and-choice-of-lipschitz-constants}. 
%
So let $t_0 \in (0,\infty)$ and $(x_{01},u_1)$, $(x_{02},u_2) \in \mathcal{D}_0$ and write $x_i := x(\cdot,x_{0i},u_i)$ and $y_i := y(\cdot,x_{0i},u_i) = \mathcal{C}(\cdot)x(\cdot,x_{0i},u_i)$ as well as $x_{12} := x_1-x_2$ and $u_{12} := u_1-u_2$.  
It then follows by the differential equation~\eqref{eq:wp-semilin-distr-contr/obs}  that
\begin{align} \label{eq:bew,general-output,distr,1}
\frac{\d}{\d s} V(s,x_{12}(s)) &= \partial_s V(s,x_{12}(s)) + \partial_x V(s,x_{12}(s))\big( \mathcal{A}(s)x_{12}(s) + f(s,x_{12}(s)) + \mathcal{B}(s)u_{12}(s) \big) \notag \\
&\quad + \partial_x V(s,x_{12}(s))\big( f(s,x_1(s))-f(s,x_2(s)) - f(s,x_{12}(s)) \big) 
\end{align} 
for all $s\in \R^+_0$. Since by the classical solution property of $x_i$ for~\eqref{eq:wp-semilin-distr-contr/obs}
\begin{align*}
x_{12}(s) = x_1(s) - x_2(s) \in D(A(s)) 
\qquad \text{and} \qquad
u_{12} \in C_{\mathrm{c}}^2(\R^+_0,U),
\end{align*}
we have $(x_{12s},u_{12}) := (x_{12}(s),u_{12}) \in \mathcal{D}_s$ and thus it follows by~\eqref{eq:scattering-passive, distr} that the first part of the right-hand side of~\eqref{eq:bew,general-output,distr,1} can be estimated as follows: 
\begin{align} \label{eq:bew,general-output,distr,2}
\partial_s V(s,x_{12}(s)) &+ \partial_x V(s,x_{12}(s))\big( \mathcal{A}(s)x_{12}(s) + f(s,x_{12}(s)) + \mathcal{B}(s)u_{12}(s) \big) \notag \\
&= \frac{\d}{\d t} V\big(t,x(t,s,x_{12s},u_{12})\big) \big|_{t=s}
\le \alpha \norm{u_{12}(s)}_U^2 - \beta \norm{\mathcal{C}(s)x_{12s}}_Y^2 \notag \\
&= \alpha \norm{u_1(s)-u_2(s)}_U^2 - \beta \norm{y_1(s)-y_2(s)}_Y^2 
\qquad (s \in \R^+_0). 
\end{align}
Since, moreover, $\partial V$ and $f$ are bounded or Lipschitz, respectively, on bounded subsets of $\R^+_0 \times X$ (Condition~\ref{ass:3, distr}(i) and~\ref{ass:1, distr}(iii)!) and $f(s,0)=0$, it further follows by~\eqref{eq:UGS, distr} that the second part of the right-hand side of~\eqref{eq:bew,general-output,distr,1} can be estimated as follows: 
\begin{align} \label{eq:bew,general-output,distr,3}
\partial_x V(s,x_{12}(s)) & \big( f(s,x_1(s)) - f(s,x_2(s)) - f(s,x_{12}(s)) \big) \notag \\
&\qquad \le 2 M_{t_0 + \rho_{t_0}(x_{01},u_1,x_{02},u_2)} \norm{x_1(s)-x_2(s)}
\qquad (s\in\R^+_0). 
\end{align} 
Inserting now~\eqref{eq:bew,general-output,distr,2} and~\eqref{eq:bew,general-output,distr,3} into~\eqref{eq:bew,general-output,distr,1} and integrating the resulting estimate (Condition~\ref{ass:3, distr}(ii)!), 
we finally obtain 
the desired estimate~\eqref{eq:bew,general-output,distr,zentrale-absch}. 
\smallskip

Combining this estimate~\eqref{eq:bew,general-output,distr,zentrale-absch} now with the density of $\mathcal{D}_0$ in $X\times L^2_{\mathrm{loc}}(\R^+_0,U)$ (Lem-ma~\ref{lm:dichtheit-und-Phi-t-stetig-fb,distr}) and the continuity of 
$(x_0,u) \mapsto x(\cdot,x_0,u) \in C(\R^+_0,X)$ established in part~(i) above, we immediately 
see that for every $(x_0,u) \in X\times L^2_{\mathrm{loc}}(\R^+_0,U)$ there exists a unique generalized output $$y(\cdot,x_0,u) \in L^2_{\mathrm{loc}}(\R^+_0,Y)$$ to~\eqref{eq:wp-semilin-distr-contr/obs}.  
Since, moreover, the right-hand side of~\eqref{eq:bew,general-output,distr,zentrale-absch} depends continuously on $(x_{01},u_1)$, $(x_{02},u_2)$, 
this estimate extends from $\mathcal{D}_0$ to arbitrary $(x_{01},u_1), (x_{02},u_2) \in X\times L^2_{\mathrm{loc}}(\R^+_0,U)$. And this extended estimate, 
in turn, yields the continuity of the generalized output map $(x_0,u) \mapsto y(\cdot,x_0,u) \in L^2_{\mathrm{loc}}(\R^+_0,Y)$. 
\end{proof}

\section{Semilinear systems with unbounded control and observation operators} \label{sect:wp-unbd-io-op}

In this section, we establish the well-posedness and uniform global stability of semilinear systems~\eqref{eq:wp-semilin-bdry-contr/obs} with unbounded control and observation operators~\eqref{eq:unbd-io-op}. 

\subsection{Classical solutions and outputs}

We begin by establishing the existence and uniqueness of classical solutions  for sufficiently regular initial states and inputs. In order to do so, we make the following assumptions. 

\begin{cond}  \label{ass:1, bdry}
$X$ is a reflexive Banach space and
\begin{itemize}
\item[(i)] 
$D(\mathcal{A}(t)) = D(\mathcal{B}(t))$ and $A(t) := \mathcal{A}(t)|_{\ker\mathcal{B}(t)}$ are operators as in Condition~\ref{ass:A(t)}

\item[(ii)] $\mathcal{B}(t): D(\mathcal{B}(t)) \subset X \to U$ and $\mathcal{C}(t): D(\mathcal{C}(t)) \subset X \to Y$ are linear operators with $D(\mathcal{C}(t)) \supset D(\mathcal{A}(t))$ and, moreover, $\mathcal{B}(t)$ has a bounded linear right-inverse $\mathcal{R}(t) \in L(U,X)$ for every $t \in \R^+_0$, that is,  
\begin{gather*}
\mathcal{R}(t)U \subset D(\mathcal{B}(t)) 
\qquad \text{and} \qquad 
\mathcal{B}(t) \mathcal{R}(t)u = u \qquad (t\in \R^+_0, u \in U),
\end{gather*}
such that $\mathcal{A}(t)\mathcal{R}(t) \in L(U,X)$ and such that $t \mapsto \mathcal{R}(t), \dot{\mathcal{R}}(t), \mathcal{A}(t)\mathcal{R}(t)$ are locally Lipschitz 

\item[(iii)] $f$ is Lipschitz on bounded subsets of $\R^+_0 \times X$.
\end{itemize}
\end{cond}

%
%

\begin{lm}
If Condition~\ref{ass:1, bdry} is satisfied, then for every $s \in \R^+_0$ and every classical datum $(x_s,u) \in \mathcal{D}_s$ with 
\begin{align}  \label{eq:D_s-bdry-def}
\mathcal{D}_s :=  \{ (x_s,u) \in D(\mathcal{A}(s)) \times C_c^2(\R^+_0,U): \mathcal{B}(s)x_s = u(s) \},
\end{align}
the system~\eqref{eq:wp-semilin-bdry-contr/obs} has a unique maximal classical solution $x(\cdot,s,x_s,u) \in C^1([s,T_{s,x_s,u}),X)$. 
\end{lm}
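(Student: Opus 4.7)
My plan is to reduce the boundary-controlled problem~\eqref{eq:wp-semilin-bdry-contr/obs} to a distributed semilinear Cauchy problem of the type already handled by Lemma~\ref{lm:pruess-extended}, by absorbing the inhomogeneous boundary datum using the right-inverses $\mathcal{R}(t)$ from Condition~\ref{ass:1, bdry}(ii). Concretely, I introduce the new variable
\begin{align*}
z(t) := x(t) - \mathcal{R}(t)u(t),
\end{align*}
and establish a one-to-one correspondence between classical solutions $x \in C^1([s,T),X)$ of~\eqref{eq:wp-semilin-bdry-contr/obs} for data $(x_s,u) \in \mathcal{D}_s$ and classical solutions $z \in C^1([s,T),X)$ taking values in $D(A(t)) = D(\mathcal{A}(t))\cap \ker\mathcal{B}(t)$ of the transformed equation
\begin{align*}
\dot{z}(t) = A(t)z(t) + \tilde{f}(t,z(t)), \qquad z(s) = x_s - \mathcal{R}(s)u(s),
\end{align*}
with
\begin{align*}
\tilde{f}(t,z) := f\big(t,z+\mathcal{R}(t)u(t)\big) + \mathcal{A}(t)\mathcal{R}(t)u(t) - \dot{\mathcal{R}}(t)u(t) - \mathcal{R}(t)\dot{u}(t).
\end{align*}
The compatibility condition $\mathcal{B}(s)x_s = u(s)$ encoded in $\mathcal{D}_s$ is used precisely to secure $z(s) \in \ker\mathcal{B}(s) \cap D(\mathcal{A}(s)) = D(A(s))$; the identity $\dot z = Az + \tilde f(\cdot,z)$ in the forward direction is then obtained by direct substitution together with $\mathcal{A}(t)z(t) = A(t)z(t)$ on $\ker\mathcal{B}(t)$.

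Next I apply Lemma~\ref{lm:pruess-extended} to the transformed problem. The family $A(t)$ satisfies Condition~\ref{ass:A(t)} by Condition~\ref{ass:1, bdry}(i) and $X$ is reflexive, so what remains to check is that $\tilde{f}: \R^+_0 \times X \to X$ is Lipschitz on bounded subsets of $\R^+_0 \times X$. This combines the local Lipschitz property of $f$ from Condition~\ref{ass:1, bdry}(iii), the global boundedness and Lipschitzness in $t$ of $u$ and $\dot u$ coming from $u \in C_c^2(\R^+_0,U)$, and the local Lipschitzness of $t \mapsto \mathcal{R}(t), \dot{\mathcal{R}}(t), \mathcal{A}(t)\mathcal{R}(t) \in L(U,X)$ from Condition~\ref{ass:1, bdry}(ii). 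Lemma~\ref{lm:pruess-extended} then yields a unique maximal classical solution $z(\cdot,s,z_s) \in C^1([s,T_{s,x_s,u}),X)$ of the transformed equation.

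Finally I undo the transformation by setting $x(t) := z(t) + \mathcal{R}(t)u(t)$ and verify that $x$ is the required maximal classical solution of~\eqref{eq:wp-semilin-bdry-contr/obs}. Regularity and the initial condition $x(s) = x_s$ are immediate; the inclusion $x(t) \in D(\mathcal{A}(t))\cap D(\mathcal{B}(t))\cap D(\mathcal{C}(t))$ follows from $D(\mathcal{A}(t)) = D(\mathcal{B}(t)) \subset D(\mathcal{C}(t))$ together with $z(t),\mathcal{R}(t)u(t) \in D(\mathcal{A}(t))$; and the identities $\dot{x} = \mathcal{A}x + f(\cdot,x)$ and $\mathcal{B}x = u$ come from direct substitution using $\mathcal{B}(t)\mathcal{R}(t) = \id_U$ and $\mathcal{B}(t)z(t) = 0$. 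Uniqueness and maximality transport from $z$ to $x$ via the lifting bijection. I expect the only mildly technical step to be the bookkeeping for the local Lipschitz estimate on $\tilde{f}$; everything else is a formal consequence of the lifting construction.
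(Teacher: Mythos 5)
Your proposal is correct and follows essentially the same route as the paper: the paper also uses the Fattorini-type substitution $\xi(t)=x(t)-\mathcal{R}(t)u(t)$, notes that the compatibility condition in $\mathcal{D}_s$ is exactly what places the transformed initial value in $D(A(s))$, and then applies Lemma~\ref{lm:pruess-extended} to the identical transformed equation~\eqref{eq:transformed-system}. No gaps to report.
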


\begin{proof}
In contrast to the case with bounded in- and output operators, we cannot directly apply Lemma~\ref{lm:pruess-extended} anymore -- just because the evolution equation
\begin{align} \label{eq:original-system}
\dot{x}(t) = \mathcal{A}(t)x(t) + f(t,x(t)), 
\qquad
u(t) = \mathcal{B}(t)x(t)
\end{align}
we are interested in here is not of the form considered in that lemma. With the transformation
\begin{align} \label{eq:Fattorini-trf}
\xi(t) = x(t)-\mathcal{R}(t)u(t) 
\end{align}
however (which is well-known for linear systems~\cite{Fa68}, \cite{JacobZwart}), we can bring~\eqref{eq:original-system} to the desired form. 
Indeed, by virtue of Condition~\ref{ass:1, bdry}(i) and~(ii), we have
\begin{align} \label{eq:alternative-def-of-classical-dataset,bdry}
\mathcal{D}_t = \{ (x_t,u) \in X \times C_c^2(\R^+_0,U): x_t - \mathcal{R}(t)u(t) \in D(A(t)) \}
\end{align}
for every $t \in \R^+_0$ (note that $D(A(t)) = \ker \mathcal{B}(t)$ by assumption). And therefore, the transformation~\eqref{eq:Fattorini-trf} with $u \in C^1(\R^+_0,U)$ induces a one-to-one correspondence between the (maximal) classical solutions of~\eqref{eq:original-system} and the (maximal) classical solutions of 
\begin{align}  \label{eq:transformed-system}
\dot{\xi}(t) = A(t)\xi(t) + f\big(t,\xi(t)+\mathcal{R}(t)u(t)\big) + \mathcal{A}(t)\mathcal{R}(t)u(t) - \dot{\mathcal{R}}(t)u(t) - \mathcal{R}(t) \dot{u}(t). 
\end{align}
In view of Condition~\ref{ass:1, bdry} and Lemma~\ref{lm:pruess-extended}, this transformed evolution equation~\eqref{eq:transformed-system} has a unique maximal classical solution for every initial state $\xi_s \in D(A(s))$ at time $s \in \R^+_0$. And therefore, by the aforementioned one-to-one correspondence, the assertion of the lemma follows.
\end{proof}

In the entire Section~\ref{sect:wp-unbd-io-op}, the symbols $\mathcal{D}_s$ and $x(\cdot,s,x_s,u)$ will have the meaning from the above lemma. Also, we will write $x(\cdot,x_0,u) := x(\cdot,0,x_0,u)$ for brevity.  
In order to obtain globality of the maximal classical solutions from above, we make the following additional assumptions.

\begin{cond} \label{ass:2, bdry}
\begin{itemize}
\item[(i)] System~\eqref{eq:wp-semilin-bdry-contr/obs} is scattering-passive w.r.t.~a continuously differentiable storage function $V$, that is, $V \in C^1(\R^+_0\times X,\R^+_0)$ and for some $\alpha, \beta > 0$
\begin{gather}
\dot{V}(t,x(t,s,x_s,u)) \le \alpha \norm{u(t)}_U^2 - \beta \norm{y(t,s,x_s,u)}_Y^2
\qquad (t \in [s,T_{s,x_s,u}))
\label{eq:scattering-passive, bdry}
\\
(y(t,s,x_s,u) := \mathcal{C}(t)x(t,s,x_s,u)) \notag
\end{gather}
for every $s \in \R^+_0$ 
and every $(x_s,u) \in \mathcal{D}_s$
\item[(ii)] $V(t,\cdot)$ is equivalent to the norm $\norm{\cdot}$ of $X$ uniformly w.r.t.~$t$, that is, for some $\ul{\psi}, \ol{\psi} \in \mathcal{K}_{\infty}$
\begin{align} \label{eq:V-equiv-to-norm, bdry}
\ul{\psi}(\norm{x}) \le V(t,x) \le \ol{\psi}(\norm{x}) 
\qquad ((t,x) \in \R^+_0 \times X).
\end{align} 
\end{itemize}
\end{cond}

\begin{lm} \label{lm:glob-ex-UGS,bdry}
If Condition~\ref{ass:1, bdry} and~\ref{ass:2, bdry} are satisfied, then the maximal classical solution $x(\cdot,s,x_s,u)$ 
exists globally in time for every $s \in \R^+_0$ and $(x_s,u) \in \mathcal{D}_s$, that is, $T_{s,x_s,u} = \infty$. Additionally, there exist $\sigma, \gamma \in \mathcal{K}$ such that
\begin{align} \label{eq:UGS, bdry}
\norm{x(t,s,x_s,u)} \le \sigma(\norm{x_s}) + \gamma(\norm{u}_{[s,t],2})
\qquad (t \in [s,\infty))
\end{align}
for every $s \in \R^+_0$ and $(x_s,u) \in \mathcal{D}_s$.
\end{lm}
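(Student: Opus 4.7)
The strategy is to mimic the proof of Lemma~\ref{lm:glob-ex-UGS,distr} step for step, with one extra wrinkle: since~\eqref{eq:wp-semilin-bdry-contr/obs} is not of the form~\eqref{eq:semilin-evol-eq-without-in/outputs}, the blow-up alternative from Lemma~\ref{lm:pruess-extended} cannot be applied directly but only after passing to the transformed variable $\xi(t) = x(t) - \mathcal{R}(t)u(t)$ introduced in the proof of the preceding lemma.

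First, I would derive the UGS estimate~\eqref{eq:UGS, bdry} on the maximal interval $[s,T_{s,x_s,u})$. Fix $s \in \R^+_0$ and $(x_s,u) \in \mathcal{D}_s$. Integrating the scattering-passivity inequality~\eqref{eq:scattering-passive, bdry} from $s$ to $t \in [s,T_{s,x_s,u})$ and dropping the non-positive output term yields $V(t,x(t,s,x_s,u)) \le V(s,x_s) + \alpha\norm{u}_{[s,t],2}^2$. Combining this with the norm equivalence~\eqref{eq:V-equiv-to-norm, bdry} and invoking the elementary inequality $\ul\psi^{-1}(a+b) \le \ul\psi^{-1}(2a) + \ul\psi^{-1}(2b)$ (valid by monotonicity of $\ul\psi^{-1}$), one arrives at~\eqref{eq:UGS, bdry} with $\sigma(r) := \ul\psi^{-1}(2\ol\psi(r))$ and $\gamma(r) := \ul\psi^{-1}(2\alpha r^2)$, both of which lie in $\mathcal{K}$. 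This part is entirely parallel to the distributed case.

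Second, I would upgrade this a priori bound to globality. Since $u \in C_c^2(\R^+_0,U)$, the full $L^2$-norm $\norm{u}_{[s,\infty),2}$ is finite, so the estimate just obtained gives $\sup_{t\in[s,T_{s,x_s,u})}\norm{x(t,s,x_s,u)} \le \sigma(\norm{x_s}) + \gamma(\norm{u}_{[s,\infty),2}) < \infty$. Because~\eqref{eq:wp-semilin-bdry-contr/obs} is not directly covered by Lemma~\ref{lm:pruess-extended}, I would feed this bound into the transformed equation~\eqref{eq:transformed-system}: by the proof of the preceding lemma, $\xi(t) := x(t,s,x_s,u) - \mathcal{R}(t)u(t)$ is the maximal classical solution of~\eqref{eq:transformed-system} on $[s,T_{s,x_s,u})$, and this equation is of the form~\eqref{eq:semilin-evol-eq-without-in/outputs}. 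The new nonlinearity
$$g(t,\xi) := f\bigl(t,\xi + \mathcal{R}(t)u(t)\bigr) + \mathcal{A}(t)\mathcal{R}(t)u(t) - \dot{\mathcal{R}}(t)u(t) - \mathcal{R}(t)\dot u(t)$$
is Lipschitz on bounded subsets of $\R^+_0 \times X$ by Condition~\ref{ass:1, bdry}(ii), (iii) combined with the $C^2$-regularity and compact support of $u$, and local boundedness of $t \mapsto \mathcal{R}(t)$ together with the bound on $x$ gives boundedness of $\xi$ on $[s,T_{s,x_s,u})$. The blow-up alternative in Lemma~\ref{lm:pruess-extended} then forces $T_{s,x_s,u} = \infty$ for $\xi$, and by the one-to-one correspondence the same conclusion holds for $x$. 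Once $T_{s,x_s,u} = \infty$, the estimate~\eqref{eq:UGS, bdry} automatically extends to all $t \in [s,\infty)$.

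The main obstacle is not conceptual but bookkeeping: one must verify that $g$ meets the Lipschitz-on-bounded-sets hypothesis of Lemma~\ref{lm:pruess-extended} and that the a priori bound on $x$ genuinely transfers to a bound on $\xi$. Both reduce to the local Lipschitzness of $t \mapsto \mathcal{R}(t)$, $\dot{\mathcal{R}}(t)$, $\mathcal{A}(t)\mathcal{R}(t)$ baked into Condition~\ref{ass:1, bdry}(ii), together with the $C_c^2$ regularity of $u$ enforced by~\eqref{eq:D_s-bdry-def} -- precisely the regularity built into the hypotheses, so no new difficulty arises.
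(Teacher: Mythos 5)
Your proof is correct and follows essentially the same route as the paper: derive the UGS estimate on the maximal interval exactly as in the distributed case, then bound $\xi(t) = x(t,s,x_s,u) - \mathcal{R}(t)u(t)$ using the compact support of $u$, and invoke the blow-up alternative of Lemma~\ref{lm:pruess-extended} for the transformed equation~\eqref{eq:transformed-system} to conclude $T_{s,x_s,u} = \infty$. The Lipschitz verification for the transformed nonlinearity that you flag is indeed the only bookkeeping point, and it is already implicitly settled in the proof of the preceding existence lemma where Lemma~\ref{lm:pruess-extended} is first applied to~\eqref{eq:transformed-system}.
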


\begin{proof}
We first show that there exist $\sigma, \gamma \in \mathcal{K}$ such that the estimate~\eqref{eq:UGS, bdry} is satisfied at least for all $t \in [s,T_{s,x_s,u})$. Indeed, this follows in exactly the same way as in the case of bounded in- and output operators (Lemma~\ref{lm:glob-ex-UGS,distr}). 
With this estimate and the compactness of $\supp u$, in turn, we obtain
\begin{align*}
\sup_{t \in [s,T_{s,x_s,u})} \norm{x(t,s,x_s,u)-\mathcal{R}(t)u(t)} 
\le \sigma(\norm{x_s}) &+ \gamma(\norm{u}_{[s,\infty),2}) \notag \\
&+ \sup_{t\in\supp u} \norm{\mathcal{R}(t)} \norm{u}_{[s,\infty),\infty} < \infty
\end{align*}
for every $s \in \R^+_0$ and $(x_s,u) \in \mathcal{D}_s$. 
So, as $\xi := x(\cdot,s,x_s,u)-\mathcal{R}(\cdot)u(\cdot)$ is a maximal classical solution of~\eqref{eq:transformed-system} by the one-to-one correspondence between~\eqref{eq:original-system} and~\eqref{eq:transformed-system}, it is even global by virtue of Lemma~\ref{lm:pruess-extended}. In other words, $T_{s,x_s,u} = \infty$, as desired. 
\end{proof}

\subsection{Well-posedness: generalized solutions and outputs}

With the above preliminaries on classical solutions, we can now move on to establish also the unique existence of generalized solutions and outputs. In order to do so, we exploit the following integral equation for classical solutions of~\eqref{eq:wp-semilin-bdry-contr/obs}:
\begin{align} \label{eq:int-eq-bdry}
x(t,x_0,u) = T(t,0)x_0 + \int_0^t T(t,s)f(s,x(s,x_0,u))\d s + \Phi_t(u)
\end{align}
for all $t \in \R^+_0$ and $(x_0,u) \in \mathcal{D}_0$, where  
\begin{gather}
\Phi_t(u) := \int_0^t T(t,s)\big( \mathcal{A}(s)\mathcal{R}(s)u(s) - \mathcal{\dot{R}}(s)u(s) - \mathcal{R}(s) \dot{u}(s) \big) \d s \notag \\
-T(t,0)\mathcal{R}(0)u(0) + \mathcal{R}(t)u(t)
\label{eq:def-Phi-t,bdry}
\end{gather}
and where $T$ is the evolution system for $A$ on the spaces $D(A(s))$ (Lemma~\ref{lm:zeitentw-zu-A, ex-und-absch}). (Invoke Lemma~\ref{lm:pruess-extended} in conjunction with the one-to-one correspondence between~\eqref{eq:original-system} and~\eqref{eq:transformed-system} to obtain the integral equation~\eqref{eq:int-eq-bdry}.)

\begin{lm} \label{lm:dichtheit-und-Phi-t-stetig-fb,bdry}
If Condition~\ref{ass:1, bdry} is satisfied, then
\begin{itemize}
\item[(i)] $\mathcal{D}_0$ is dense in $X \times L^2_{\mathrm{loc}}(\R^+_0,U)$
\item[(ii)] $\Phi_t: C^2([0,t],U) \to X$ defined by~\eqref{eq:def-Phi-t,bdry} for every $t \in (0,\infty)$ uniquely extends to a bounded linear operator $\ol{\Phi}_t: L^2([0,t],U) \to X$  and 
\begin{align} \label{eq:absch-Phi-t,bdry}
C_{t_0} := \sup_{t\in [0,t_0]} \norm{\ol{\Phi}_t} < \infty
\end{align}
for every $t_0 \in (0,\infty)$.
\end{itemize}
\end{lm}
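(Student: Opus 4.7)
Part (i) is a density argument that satisfies the compatibility constraint $\mathcal{B}(0) x_0 = u(0)$ in~\eqref{eq:D_s-bdry-def} by arranging both sides to vanish. Given $(x_0, u) \in X \times L^2_{\mathrm{loc}}(\R^+_0, U)$, I follow the construction in the proof of Lemma~\ref{lm:density-and-choice-of-lipschitz-constants}(ii) to pick $u_n \in C_{\mathrm{c}}^2((0,n), U)$ converging to $u$ in $L^2_{\mathrm{loc}}(\R^+_0, U)$; since $\supp u_n \subset (0,n)$, one has $u_n(0) = 0$. By Condition~\ref{ass:1, bdry}(i) combined with Lemma~\ref{lm:zeitentw-zu-A, ex-und-absch}, $A(0) = \mathcal{A}(0)|_{\ker\mathcal{B}(0)}$ is a $C_0$-semigroup generator on $X$, so $D(A(0)) = \ker\mathcal{B}(0) \cap D(\mathcal{A}(0))$ is dense in $X$, and I pick $x_{0n} \in D(A(0))$ with $x_{0n} \to x_0$. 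Then $\mathcal{B}(0) x_{0n} = 0 = u_n(0)$, so $(x_{0n}, u_n) \in \mathcal{D}_0$ and converges to $(x_0, u)$, proving~(i).

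For (ii) I need to show $\norm{\Phi_t(u)}_X \le C_{t_0} \norm{u}_{[0,t],2}$ uniformly in $t \in [0, t_0]$ for $u \in C^2([0,t], U)$; the extension to $\ol{\Phi}_t: L^2([0,t], U) \to X$ then follows by density of $C^2([0,t],U)$ in $L^2([0,t],U)$. I decompose $\Phi_t(u)$ into the regular integral $\int_0^t T(t,s)[\mathcal{A}(s)\mathcal{R}(s) - \dot{\mathcal{R}}(s)] u(s) \d s$ and the boundary piece $\mathcal{R}(t) u(t) - T(t,0) \mathcal{R}(0) u(0) - \int_0^t T(t,s) \mathcal{R}(s) \dot{u}(s) \d s$. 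The regular integral is bounded by $C_{t_0} \norm{u}_{[0,t],2}$ immediately, using Lemma~\ref{lm:zeitentw-zu-A, ex-und-absch}, the local boundedness of $s \mapsto \mathcal{A}(s)\mathcal{R}(s), \dot{\mathcal{R}}(s)$ in $L(U, X)$ from Condition~\ref{ass:1, bdry}(ii), and Cauchy--Schwarz, exactly as in the proof of Lemma~\ref{lm:dichtheit-und-Phi-t-stetig-fb,distr}(ii). The boundary piece involves pointwise values of $u$ and $\dot u$ that are not controlled by $\norm{u}_{[0,t],2}$ individually; the plan is to eliminate them by integration by parts on $s \mapsto T(t,s) \mathcal{R}(s) u(s)$, which rewrites the boundary piece as an $L^2$-bounded integral of $T(t,s)$ (or its extrapolation $T_{-1}(t,s)$) against $L(U, X)$-valued functions acting on $u(s)$, yielding the same estimate.

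The main obstacle is making this integration by parts rigorous, since $\mathcal{R}(s) u(s) \in D(\mathcal{A}(s))$ generally lies outside $D(A(s)) = \ker\mathcal{B}(s)$, so the identity $\partial_s T(t,s) y = -T(t,s) A(s) y$ (valid only for $y \in D(A(s))$) cannot be applied directly. I would work in the extrapolation-space framework, where the evolution system extends to $T_{-1}$ and the formula $\partial_s T_{-1}(t,s) y = -T_{-1}(t,s) A_{-1}(s) y$ becomes available for every $y \in X$. The hypothesis $\mathcal{A}(s) \mathcal{R}(s) \in L(U, X)$ together with the local Lipschitz regularity of $t \mapsto \mathcal{R}(t), \dot{\mathcal{R}}(t), \mathcal{A}(t)\mathcal{R}(t)$ from Condition~\ref{ass:1, bdry}(ii) should keep the resulting integrals $X$-valued and deliver the $L^2$-admissibility-type estimate needed to close the bound. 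An alternative would be to argue directly via the Fattorini transformation~\eqref{eq:Fattorini-trf}--\eqref{eq:transformed-system} used in Lemma~\ref{lm:pruess-extended}, reducing the boundedness of $\Phi_t$ to estimates on the inhomogeneity of the transformed linear equation on the reference space $D(A(s))$.
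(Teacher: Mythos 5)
Part~(i) of your proposal is correct and is essentially the paper's argument in a slightly more special form: instead of forcing $u_n(0)=0$ and $x_{0n}\in D(A(0))$, the paper approximates $x_0$ from the dense affine subspace $\mathcal{R}(0)u_n(0)+D(A(0))$, which works for an arbitrary approximating sequence $(u_n)$; your version works too because the construction in Lemma~\ref{lm:density-and-choice-of-lipschitz-constants}(ii) does produce $u_n$ supported in $(0,n)$.

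Part~(ii) is where there is a genuine gap. After your integration by parts, the boundary piece of $\Phi_t(u)$ collapses to $\int_0^t T_{-1}(t,s)\bigl(\mathcal{A}(s)-A_{-1}(s)\bigr)\mathcal{R}(s)u(s)\d s$, i.e.\ exactly the input map of the boundary control operator $B(s):=\bigl(\mathcal{A}(s)-A_{-1}(s)\bigr)\mathcal{R}(s)\in L(U,X_{-1})$. The assertion that this integral lands in $X$ with a bound $\lesssim\norm{u}_{[0,t],2}$ \emph{is} the $L^2$-admissibility of $B$, and this is precisely the statement to be proved; it does not follow from Condition~\ref{ass:1, bdry}(ii). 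The regularity hypotheses there ($\mathcal{A}(t)\mathcal{R}(t)\in L(U,X)$, local Lipschitz continuity of $t\mapsto\mathcal{R}(t),\dot{\mathcal{R}}(t),\mathcal{A}(t)\mathcal{R}(t)$) control the $X$-valued part of the decomposition but say nothing about the genuinely unbounded part $A_{-1}(s)\mathcal{R}(s)$; for general boundary control systems admissibility simply fails. So the phrase ``should \ldots deliver the $L^2$-admissibility-type estimate needed to close the bound'' is assuming the conclusion. The Fattorini-transformation alternative you mention runs into the same wall, since the transformed inhomogeneity in~\eqref{eq:transformed-system} still contains $\mathcal{R}(s)\dot{u}(s)$.

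The paper sidesteps all of this: it does not estimate $\Phi_t(u)$ from its definition~\eqref{eq:def-Phi-t,bdry} at all, but solves the integral equation~\eqref{eq:int-eq-bdry} for $\Phi_t(u)=x(t,x_0,u)-T(t,0)x_0-\int_0^t T(t,s)f(s,x(s,x_0,u))\d s$ with a suitably chosen classical datum $(x_0,u)\in\mathcal{D}_0$, $\norm{x_0}\le 1$, and then bounds the right-hand side using the evolution-system estimate~\eqref{eq:zeitentw-zu-A,absch}, the local Lipschitz bound on $f$, and --- crucially --- the uniform global stability estimate~\eqref{eq:UGS, bdry} for classical solutions from Lemma~\ref{lm:glob-ex-UGS,bdry}. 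It is this a priori bound on $x(\cdot,x_0,u)$, which ultimately rests on the scattering-passivity assumption (Condition~\ref{ass:2, bdry}), that substitutes for the admissibility estimate you are missing. If you want to rescue your direct approach, you would have to import passivity in some form at exactly this point; with Condition~\ref{ass:1, bdry} alone the bound~\eqref{eq:absch-Phi-t,bdry} is not within reach by the route you describe.
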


\begin{proof}
Assertion~(i) is again a 
consequence of the density of $C_{\mathrm{c}}^2(\R^+_0,U)$ in $L^2_{\mathrm{loc}}(\R^+_0,U)$ (Lemma~\ref{lm:density-and-choice-of-lipschitz-constants}) and the density of $D(A(0)) = M(0)^{-1}D(A_0(0))$ in $X$ (Condition~\ref{ass:1, bdry}(i)) in conjunction with the alternative description~\eqref{eq:alternative-def-of-classical-dataset,bdry} of $\mathcal{D}_0$. 
Indeed, let $(x_0,u) \in X \times L^2_{\mathrm{loc}}(\R^+_0,U)$. Then there exists a sequence $(u_n)$ in $C_{\mathrm{c}}(\R^+_0,U)$ with 
\begin{align*}
u_n \underset{L^2_{\mathrm{loc}}(\R^+_0,U)}{\longrightarrow} u \qquad (n \to \infty).
\end{align*}
Since $D(A(0))$ is dense in $X$, so is the subspace $\mathcal{R}(0)u_n(0) + D(A(0))$ and therefore for every $n \in \N$ there exists an $x_{0n} \in X$ with
\begin{align*}
x_{0n} \in \mathcal{R}(0)u_n(0) + D(A(0)) 
\qquad \text{and} \qquad
x_{0n} \in \ol{B}_{1/n}^{X}(x_0).
\end{align*}
Consequently, $(x_{0n},u_n) \in \mathcal{D}_0$ 
and $(x_{0n},u_n)  \underset{X \times L^2_{\mathrm{loc}}(\R^+_0,U)}{\longrightarrow} (x_0,u)$ as $n \to \infty$, as desired. 

Assertion~(ii) now does not immediately follow from the definition of $\Phi_t$ anymore, but from~\eqref{eq:int-eq-bdry} instead. 
Indeed, let $t_0 \in (0,\infty)$ be fixed and let $t \in [0,t_0]$ and
\begin{align} \label{eq:Phi-t-bd,bdry}
u_t \in C^2([0,t],U) \qquad \text{with} \qquad \norm{u_t}_{[0,t],2} \le 1. 
\end{align}
Then, of course, there exists a $u \in C_c^2(\R^+_0,U)$ with $u|_{[0,t]} = u_t$ and, by the density of $\mathcal{R}(0)u(0) + D(A(0))$ in $X$, there also exists an $x_0 \in X$ with
\begin{align*}
x_0 \in \mathcal{R}(0)u(0) + D(A(0)) 
\qquad \text{and} \qquad
x_0 \in \ol{B}_{1}^{X}(0).
\end{align*}
Consequently, $(x_0,u) \in \mathcal{D}_0$. 
We can thus conclude from~\eqref{eq:int-eq-bdry} with the help of~\eqref{eq:UGS, bdry}, \eqref{eq:zeitentw-zu-A,absch}, and Condition~\ref{ass:1, bdry}(iii) that
\begin{align} \label{eq:absch-Phi-t,bdry,beweis}
\norm{\Phi_t(u_t)} &= \norm{\Phi_t(u)} 
\le \rho_{t_0}(x_0,u) + M_{t_0}\e^{\omega_{t_0} t_0} \norm{x_0} + M_{t_0}\e^{\omega_{t_0} t_0} \cdot \notag \\
&\qquad \qquad \qquad \cdot \int_0^t L_{t_0+\rho_{t_0}(x_0,u)}  (t_0+\rho_{t_0}(x_0,u)) + \norm{f(0,0)} \d s \notag \\
&\le \rho_0 + M_{t_0}\e^{\omega_{t_0} t_0} \Big( \norm{x_0} + L_{t_0+\rho_0} (t_0+\rho_0)t_0 + \norm{f(0,0)}t_0 \Big),
\end{align}
where we used that 
\begin{align*}
\rho_{t_0}(x_0,u) := \sigma(\norm{x_0}) + \gamma(\norm{u}_{[0,t_0],2}) \le \sigma(1) + \gamma(1) =: \rho_0
\end{align*}
and that the Lipschitz constants $L_{\rho}$, chosen according to Lemma~\ref{lm:density-and-choice-of-lipschitz-constants}, are monotonically increasing in $\rho$. And from~\eqref{eq:absch-Phi-t,bdry,beweis} and~\eqref{eq:Phi-t-bd,bdry}, 
in turn, the assertion~(ii) is clear. 
\end{proof}

With the assumptions and preparations made so far, we can already prove the existence of unique generalized solutions and their continuous dependence on the data. In order to get the same things also for  generalized outputs, we impose the following final assumption.

\begin{cond}  \label{ass:3, bdry}
\begin{itemize}
\item[(i)] $\partial V: \R^+_0 \times X \to L(\R\times X, \R)$, the derivative of $V$, is bounded on bounded subsets of $\R^+_0 \times X$
\item[(ii)] $y(\cdot,s,x_s,u) := \mathcal{C}(\cdot)x(\cdot,s,x_s,u)$ is measurable for every $s \in \R^+_0$ and $(x_s,u) \in \mathcal{D}_s$.
\end{itemize}
\end{cond}

\begin{thm} \label{thm:wp, bdry}
If Condition~\ref{ass:1, bdry}, \ref{ass:2, bdry} and~\ref{ass:3, bdry} are satisfied and if  $f(t,0) =0$ for every  $t \in \R^+_0$, then the system~\eqref{eq:wp-semilin-bdry-contr/obs} is well-posed
and uniformly globally stable.
\end{thm}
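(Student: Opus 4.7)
The plan is to mirror the two-part structure of the proof of Theorem~\ref{thm:wp, distr}, replacing each bounded-operator ingredient with its boundary counterpart: the integral representation~\eqref{eq:int-eq-bdry}, the density of $\mathcal{D}_0$ and the continuous extension of $\Phi_t$ from Lemma~\ref{lm:dichtheit-und-Phi-t-stetig-fb,bdry}, and the uniform global stability estimate for classical solutions from Lemma~\ref{lm:glob-ex-UGS,bdry}.

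First, for generalized solutions I would derive the Gr\"onwall-type fundamental estimate for $\norm{x(\cdot,x_{01},u_1) - x(\cdot,x_{02},u_2)}_{[0,t_0],\infty}$ in terms of $\norm{x_{01}-x_{02}}$ and $\norm{u_1-u_2}_{[0,t_0],2}$, with an exponential prefactor involving a Lipschitz constant of $f$ on a ball whose radius is controlled via $\rho_{t_0}(x_{01},u_1,x_{02},u_2)$ defined as in~\eqref{eq:def-rho-vier-argumente,distr}. The ingredients are exactly the integral equation~\eqref{eq:int-eq-bdry}, the linear bound~\eqref{eq:zeitentw-zu-A,absch} for $T(t,s)$, the identity $\Phi_t(u_1)-\Phi_t(u_2) = \ol{\Phi}_t(u_1-u_2)$ together with~\eqref{eq:absch-Phi-t,bdry}, the local Lipschitz continuity of $f$, and the radius control provided by~\eqref{eq:UGS, bdry}; Gr\"onwall then closes the estimate. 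Combined with the density of $\mathcal{D}_0$ in $X\times L^2_{\mathrm{loc}}(\R^+_0,U)$ (Lemma~\ref{lm:dichtheit-und-Phi-t-stetig-fb,bdry}(i)), this yields the unique existence and continuous data dependence of the generalized solution $x(\cdot,x_0,u) \in C(\R^+_0,X)$, and continuous extension of~\eqref{eq:UGS, bdry} from $\mathcal{D}_0$ delivers uniform global stability.

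For generalized outputs I would derive the analogue of~\eqref{eq:bew,general-output,distr,zentrale-absch} by differentiating the storage function along the difference $x_{12} := x(\cdot,x_{01},u_1) - x(\cdot,x_{02},u_2)$. The essential algebraic observation, which the boundary data set~\eqref{eq:D_s-bdry-def} is designed to accommodate, is that linearity of $\mathcal{B}(s)$ yields $\mathcal{B}(s)x_{12}(s) = u_1(s) - u_2(s) = u_{12}(s)$, so $(x_{12}(s),u_{12}) \in \mathcal{D}_s$ with $u_{12} := u_1-u_2$. Splitting $\dds{V(s,x_{12}(s))}$ into the ``admissible-datum part'' $\partial_s V(s,x_{12}(s)) + \partial_x V(s,x_{12}(s)) \big[ \mathcal{A}(s)x_{12}(s) + f(s,x_{12}(s)) \big]$, which is bounded by~\eqref{eq:scattering-passive, bdry} applied to the datum $(x_{12}(s),u_{12})$, and the remainder $\partial_x V(s,x_{12}(s)) \big[ f(s,x_1(s)) - f(s,x_2(s)) - f(s,x_{12}(s)) \big]$, which is bounded via Condition~\ref{ass:3, bdry}(i), the local Lipschitz continuity of $f$, \eqref{eq:UGS, bdry}, and $f(s,0)=0$, integration and~\eqref{eq:V-equiv-to-norm, bdry} then yield the fundamental estimate. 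Density and the continuity of $(x_0,u)\mapsto x(\cdot,x_0,u)$ established in the first step then produce the unique generalized output and its continuous data dependence.

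The main obstacle should be the algebraic compatibility underpinning both steps: because $\mathcal{A}(s)$ and $\mathcal{B}(s)$ are unbounded with common time-dependent domain $D(\mathcal{A}(s)) = D(\mathcal{B}(s))$, one must check that after adding and subtracting $f(s,x_{12}(s))$ the difference $x_{12}$ still fits into~\eqref{eq:wp-semilin-bdry-contr/obs} as a classical trajectory of an admissible datum in $\mathcal{D}_s$, and, similarly, one must verify that $\Phi_t(u_1)-\Phi_t(u_2)$ agrees with $\ol{\Phi}_t(u_1-u_2)$ on classical data despite the boundary-trace contributions $\mathcal{R}(t)u(t)$ and $T(t,0)\mathcal{R}(0)u(0)$ appearing in~\eqref{eq:def-Phi-t,bdry}. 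Once these compatibility checks are secured, the remainder is a transcription of the proof of Theorem~\ref{thm:wp, distr}.
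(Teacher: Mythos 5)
Your proposal is correct and follows essentially the same route as the paper: the Grönwall-type fundamental estimate for solutions via the integral equation~\eqref{eq:int-eq-bdry}, \eqref{eq:zeitentw-zu-A,absch}, \eqref{eq:absch-Phi-t,bdry} and~\eqref{eq:UGS, bdry}, and the output estimate via splitting $\dds{V(s,x_{12}(s))}$ into the admissible-datum part (controlled by~\eqref{eq:scattering-passive, bdry} applied to $(x_{12}(s),u_{12})\in\mathcal{D}_s$) plus the remainder $\partial_x V(s,x_{12}(s))\big(f(s,x_1(s))-f(s,x_2(s))-f(s,x_{12}(s))\big)$, followed by density and continuity arguments. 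The compatibility checks you flag as the main obstacle are exactly the ones the paper carries out (linearity of $\mathcal{B}(s)$ giving $\mathcal{B}(s)x_{12}(s)=u_{12}(s)$, and linearity of $\Phi_t$ on classical data), so nothing is missing.
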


\begin{proof}
(i) We first show that for every $t_0 \in (0,\infty)$ and every $(x_{01},u_1), (x_{02},u_2) \in \mathcal{D}_0$ one has the following fundamental estimate:
\begin{align} \label{eq:bew,general-solution,bdry,zentrale-absch}
\|x(\cdot,x_{01},u_1) &- x(\cdot,x_{02},u_2)\|_{[0,t_0],\infty} 
\le 
\Big( M_{t_0}\e^{\omega_{t_0} t_0} \norm{x_{01}-x_{02}} + C_{t_0} \norm{u_1-u_2}_{[0,t_0],2} \Big) \cdot \notag \\
&\quad \cdot \exp\big( M_{t_0}\e^{\omega_{t_0} t_0} L_{t_0+\rho_{t_0}(x_{01},u_1,x_{02},u_2)} t_0 \big), 
\end{align}
where $M_{t_0}$, $\omega_{t_0}$ are as in Lemma~\ref{lm:zeitentw-zu-A, ex-und-absch}, 
$C_{t_0}$ is as in Lemma~\ref{lm:dichtheit-und-Phi-t-stetig-fb,bdry}, 
$L_{\rho}$ are Lipschitz constants chosen as in Lemma~\ref{lm:density-and-choice-of-lipschitz-constants} and 
\begin{align} \label{eq:def-rho-vier-argumente,bdry}
\rho_{t_0}(x_{01},u_1,x_{02},u_2) := \sigma(\norm{x_{01}}) + \gamma(\norm{u_1}_{[0,t_0],2}) + \sigma(\norm{x_{02}}) + \gamma(\norm{u_2}_{[0,t_0],2})
\end{align} 
with $\sigma, \gamma$ as in Lemma~\ref{lm:glob-ex-UGS,bdry}. 
So let $t_0 \in (0,\infty)$ and $(x_{01},u_1)$, $(x_{02},u_2) \in \mathcal{D}_0$ and write $x_i := x(\cdot,x_{0i},u_i)$. 
It then follows from~\eqref{eq:int-eq-bdry} with the help of~\eqref{eq:zeitentw-zu-A,absch}, \eqref{eq:absch-Phi-t,bdry}, and the Lipschitz continuity of $f$ on bounded subsets (Condition~\ref{ass:1, bdry}(iii)) combined with~\eqref{eq:UGS, bdry} that
\begin{align}
\norm{x_1(t)-x_2(t)} &\le M_{t_0}\e^{\omega_{t_0} t_0} \norm{x_{01}-x_{02}} + C_{t_0} \norm{u_1-u_2}_{[0,t_0],2} \notag \\
&\quad \qquad + M_{t_0}\e^{\omega_{t_0} t_0} \int_0^t L_{t_0+\rho_{t_0}(x_{01},u_1,x_{02},u_2)} \norm{x_1(s)-x_2(s)} \d s
\end{align}
for all $t \in [0,t_0]$. And therefore the desired estimate~\eqref{eq:bew,general-solution,bdry,zentrale-absch} follows by virtue of Gr\"onwall's lemma. 
\smallskip

Combining this estimate~\eqref{eq:bew,general-solution,bdry,zentrale-absch} now with the density of $\mathcal{D}_0$ in $X\times L^2_{\mathrm{loc}}(\R^+_0,U)$ (Lem-ma~\ref{lm:dichtheit-und-Phi-t-stetig-fb,bdry}), we immediately 
see that for every $(x_0,u) \in X\times L^2_{\mathrm{loc}}(\R^+_0,U)$ there exists a unique generalized solution 
\begin{align} \label{eq:generalized-sol,bdry}
x(\cdot,x_0,u) \in C(\R^+_0,X)
\end{align}
to~\eqref{eq:wp-semilin-bdry-contr/obs}. (As for the uniqueness of these generalized solutions, the same remarks apply as those made after~\eqref{eq:generalized-sol,distr} for the case of bounded in- and output operators.)
Since the right-hand side of the estimate~\eqref{eq:UGS, bdry} with $s=0$ depends continuously on $(x_0,u)$, this estimate extends from $\mathcal{D}_0$ to arbitrary $(x_0,u) \in X \times L^2_{\mathrm{loc}}(\R^+_0,U)$. Consequently, \eqref{eq:wp-semilin-bdry-contr/obs} is uniformly globally stable.  
Since, moreover, the right-hand side of~\eqref{eq:bew,general-solution,bdry,zentrale-absch} depends continuously on $(x_{01},u_1), (x_{02},u_2)$, 
this estimate extends from $\mathcal{D}_0$ to arbitrary $(x_{01},u_1), (x_{02},u_2) \in X\times L^2_{\mathrm{loc}}(\R^+_0,U)$. And this extended estimate, 
in turn, yields the continuity of the generalized solution map $(x_0,u) \mapsto x(\cdot,x_0,u) \in C(\R^+_0,X)$. 
\smallskip

(ii) We first show that for every $t_0 \in (0,\infty)$ and every $(x_{01},u_1), (x_{02},u_2) \in \mathcal{D}_0$ one has the following fundamental estimate:
\begin{align} \label{eq:bew,general-output,bdry,zentrale-absch}
&\beta \norm{y(\cdot,x_{01},u_1)-y(\cdot,x_{02},u_2)}_{[0,t_0],2}^2 
\le 
\ol{\psi}(\norm{x_{01}-x_{02}}) + \alpha \norm{u_1-u_2}_{[0,t_0],2}^2 \notag \\
&\qquad \qquad \qquad \, + 2 M_{t_0+\rho_{t_0}(x_{01},u_1,x_{02},u_2)} \norm{x(\cdot,x_{01},u_1)-x(\cdot,x_{02},u_2)}_{[0,t_0],\infty} t_0,  
\end{align}
where $\rho_{t_0}(x_{01},u_1,x_{02},u_2)$ is defined as in~\eqref{eq:def-rho-vier-argumente,bdry} 
and where 
$M_{\rho} := K_{\rho} L_{\rho}$ with 
\begin{align*}
K_{\rho} \ge  K_{\rho}^0 := \sup \{ \norm{\partial V(t,x)}: t + \norm{x} \le \rho \}
\end{align*}
chosen such that $\rho \mapsto K_{\rho}$ is continuous and monotonically increasing (see the proof of Lemma~\ref{lm:density-and-choice-of-lipschitz-constants}) and with Lipschitz constants $L_{\rho}$ chosen as in Lemma~\ref{lm:density-and-choice-of-lipschitz-constants}. 
%
So let $t_0 \in (0,\infty)$ and $(x_{01},u_1)$, $(x_{02},u_2) \in \mathcal{D}_0$ and write $x_i := x(\cdot,x_{0i},u_i)$ and $y_i := y(\cdot,x_{0i},u_i) = \mathcal{C}(\cdot)x(\cdot,x_{0i},u_i)$ as well as $x_{12} := x_1-x_2$ and $u_{12} := u_1-u_2$.  
It then follows by the differential equation~\eqref{eq:wp-semilin-bdry-contr/obs}  that
\begin{align} \label{eq:bew,general-output,bdry,1}
\frac{\d}{\d s} V(s,x_{12}(s)) &= \partial_s V(s,x_{12}(s)) + \partial_x V(s,x_{12}(s))\big( \mathcal{A}(s)x_{12}(s) + f(s,x_{12}(s)) \big) \notag \\
&\quad + \partial_x V(s,x_{12}(s))\big( f(s,x_1(s))-f(s,x_2(s)) - f(s,x_{12}(s)) \big) 
\end{align} 
for all $s\in \R^+_0$. Since by the classical solution property of $x_i$ for~\eqref{eq:wp-semilin-bdry-contr/obs}
\begin{align*}
x_{12}(s) \in D(\mathcal{A}(s)) \qquad \text{with} \qquad \mathcal{B}(s)x_{12}(s) = u_{12}(s) \qquad \text{and} \qquad u_{12} \in C_c^2(\R^+_0,U),
\end{align*}
we have $(x_{12s},u_{12}) := (x_{12}(s),u_{12}) \in \mathcal{D}_s$ and thus it follows by~\eqref{eq:scattering-passive, bdry} that the first part of the right-hand side of~\eqref{eq:bew,general-output,bdry,1} can be estimated as follows: 
\begin{align} \label{eq:bew,general-output,bdry,2}
\partial_s V(s,x_{12}(s)) &+ \partial_x V(s,x_{12}(s))\big( \mathcal{A}(s)x_{12}(s) + f(s,x_{12}(s)) \big) \notag \\
&= \frac{\d}{\d t} V\big(t,x(t,s,x_{12s},u_{12})\big) \big|_{t=s}
\le \alpha \norm{u_{12}(s)}_U^2 - \beta \norm{\mathcal{C}(s)x_{12s}}_Y^2 \notag \\
&= \alpha \norm{u_1(s)-u_2(s)}_U^2 - \beta \norm{y_1(s)-y_2(s)}_Y^2 
\qquad (s \in \R^+_0). 
\end{align}
Since, moreover, $\partial V$ and $f$ are bounded or Lipschitz, respectively, on bounded subsets of $\R^+_0 \times X$ (Condition~\ref{ass:3, bdry}(i) and~\ref{ass:1, bdry}(iii)!) and $f(s,0)=0$, it further follows by~\eqref{eq:UGS, bdry} that the second part of the right-hand side of~\eqref{eq:bew,general-output,bdry,1} can be estimated as follows: 
\begin{align} \label{eq:bew,general-output,bdry,3}
\partial_x V(s,x_{12}(s)) & \big( f(s,x_1(s)) - f(s,x_2(s)) - f(s,x_{12}(s)) \big) \notag \\
&\qquad \le 2 M_{t_0 + \rho_{t_0}(x_{01},u_1,x_{02},u_2)} \norm{x_1(s)-x_2(s)}
\qquad (s\in\R^+_0). 
\end{align} 
Inserting now~\eqref{eq:bew,general-output,bdry,2} and~\eqref{eq:bew,general-output,bdry,3} into~\eqref{eq:bew,general-output,bdry,1} and integrating the resulting estimate (Condition~\ref{ass:3, bdry}(ii)!), 
we finally obtain 
the desired estimate~\eqref{eq:bew,general-output,bdry,zentrale-absch}. 
\smallskip

Combining this estimate~\eqref{eq:bew,general-output,bdry,zentrale-absch} now with the density of $\mathcal{D}_0$ in $X\times L^2_{\mathrm{loc}}(\R^+_0,U)$ (Lem-ma~\ref{lm:dichtheit-und-Phi-t-stetig-fb,bdry}) and the continuity of 
$(x_0,u) \mapsto x(\cdot,x_0,u) \in C(\R^+_0,X)$ established in part~(i) above, we immediately 
see that for every $(x_0,u) \in X\times L^2_{\mathrm{loc}}(\R^+_0,U)$ there exists a unique generalized output $$y(\cdot,x_0,u) \in L^2_{\mathrm{loc}}(\R^+_0,Y)$$ to~\eqref{eq:wp-semilin-bdry-contr/obs}.
Since, moreover, the right-hand side of~\eqref{eq:bew,general-output,bdry,zentrale-absch} depends continuously on $(x_{01},u_1)$, $(x_{02},u_2)$, 
this estimate extends from $\mathcal{D}_0$ to arbitrary $(x_{01},u_1), (x_{02},u_2) \in X\times L^2_{\mathrm{loc}}(\R^+_0,U)$. And this extended estimate, 
in turn, yields the continuity of the generalized output map $(x_0,u) \mapsto y(\cdot,x_0,u) \in L^2_{\mathrm{loc}}(\R^+_0,Y)$. 
\end{proof}

\section{Some applications} \label{sect:applications}

We now present two classes of applications of our abstract well-posedness and stability results: 
one for the case of bounded control and observation operators and one for the case of unbounded control and observation operators. 
%
In both cases, the considered systems arise as closed-loop systems by coupling a linear system $\ul{\mathfrak{S}}$ to a nonlinear controller $\mathfrak{S}_c$ with a standard feedback interconnection, that is, the output $\ul{y}$ of the linear system 
is the input $u_c$ of the controller, the input $\ul{u}$ of the linear system is minus the output $-y_c$ of the controller plus the (external) input $u$ of the closed-loop system, and $\ul{y}$ is also the (external) output of the closed-loop system. In short,
\begin{align} \label{eq:standard-feedback-interconnection}
\ul{y}(t) = u_c(t) \qquad \text{and} \qquad -y_c(t)+u(t) = \ul{u}(t) \qquad \text{and} \qquad \ul{y}(t) = y(t)
\end{align}
and in pictures such a closed-loop system 
can be represented as in the figure below. 
\begin{figure}[h!]
\centering
\begin{tikzpicture}[scale =1]
\tikzstyle{tf} = [rectangle, draw = black, minimum width= 1.618cm, minimum height = 1cm]
\tikzstyle{sum} = [draw=black, shape=circle, minimum width = .5, minimum height = .5]

\node(BC) [tf,scale=1, minimum width = 1cm] at (4,4){System $\ul{\mathfrak{S}}$};
\node(NL) [tf,scale=1, minimum width = 1cm] at (4,2){Controller $\mathfrak{S}_c$};
\node(s1) [sum, scale = .7, minimum width = .5] at (2,4) {$+$};
\node(m) [ right] at (2,3.5){$-$};

\node(u-ext) [above] at (1.2,4) {$u$};
\node(u) [signal, above] at (2.6,4) {$\ul{u}$};
\node(y) [above left] at (6,4){} ;
\node(o)[signal, above] at (5.6,4) {$\ul{y}$};
\node(uc) [above left] at (6,2) {$u_c$};
\node(yc) [above right] at (2,2) {$y_c$};
\node(y-ext) [above] at (6.7,4) {$y$};

\draw [->] (0.6,4)--(s1);
\draw [->] (s1)--(BC);
\draw [->] (BC)--(7.4,4);
\draw [->] (y.south east) -- (uc.south east) -- (NL);
\draw [->] (NL.west) -| (s1.south); 
\end{tikzpicture}
\end{figure}
Also, in both cases, the considered systems will be even strictly impedance-passive (instead of only scattering-passive) w.r.t.~a continuously differentiable storage function, that is,  $V \in C^1(\R^+_0 \times X,\R^+_0)$ and $U = Y$ is a Hilbert space such that for some $\varsigma > 0$
\begin{gather}
\dot{V}(t, x(t,s,x_s,u)) \le \Re \scprd{u(t),y(t)}_U - \varsigma \norm{y(t,s,x_s,u)}_U^2 \label{eq:imp-passive}
\qquad (s \in [s,T_{s,x_s,u})) \\
(y(t,s,x_s,u) := \mathcal{C}(t)x(t,s,x_s,u)) \notag
\end{gather}
for every $s\in \R^+_0$  and $(x_s,u) \in \mathcal{D}_s$ 
with the classical data set $\mathcal{D}_s$ from~\eqref{eq:D_s-distr-def} or~\eqref{eq:D_s-bdry-def} respectively.
Clearly, this implies the scattering-passivity estimates~\eqref{eq:scattering-passive, distr} and~\eqref{eq:scattering-passive, bdry} with $\alpha := \frac{1}{2\varsigma}$ and $\beta := \frac{\varsigma}{2}$.

\subsection{Case with bounded control and observation operators} \label{sect:appl-bd}

\subsubsection{Setting: open-loop system and controller} \label{sect:appl-bd,open-loop-and-controller}

As our open-loop system, we consider a non-autonomous linear collocated system with bounded control and observation operators. Such a system evolves according to the differential equation
\begin{align} \label{eq:open-loop-diff-eq,distr}
\dot{x}(t) = \mathcal{A}(t)x(t) + \mathcal{B}(t)\ul{u}(t)
\end{align}
in the state space $X$ with the additional observation condition
\begin{align} \label{eq:open-loop-output,distr}
\ul{y}(t) = \mathcal{C}(t)x(t).
\end{align}
In these equations, $A(t) := \mathcal{A}(t)$ are operators as in Lemma~\ref{lm:vereinf-vor-A(t)} (in particular, $X$ is a Hilbert space) and $\mathcal{B}(t) \in L(U,X)$, $\mathcal{C}(t) \in L(X,Y)$ with a Hilbert space $U=Y$ such that
\begin{align} \label{eq:collocation-condition}
\mathcal{C}(t) = \mathcal{B}(t)^*M(t).
\end{align}
(In concrete examples, 
\eqref{eq:collocation-condition} typically means~\cite{Oo00} that the observation takes place at the same location as the control or, in other words, that control and observation are collocated).
Additionally, $t \mapsto M(t)$ from Lemma~\ref{lm:vereinf-vor-A(t)} is assumed to be monotonically decreasing, that is, for every $x \in X$
\begin{align} \label{eq:M(t)-mon-decreasing} 
\scprd{M(t)x,x}_X \le \scprd{M(s)x,x}_X \qquad (s\le t) 
\end{align}
and $t \mapsto \mathcal{B}(t)$ is assumed to be locally Lipschitz. 
%
We now couple our open-loop system~\eqref{eq:open-loop-diff-eq,distr}-\eqref{eq:open-loop-output,distr} to a nonlinear static controller described by the input-output relation
\begin{align} \label{eq:controller,distr}
y_c(t) = g(u_c(t)),
\end{align}
where $g: U \to U$ is Lipschitz on bounded subsets of $U$ and strictly damping in the sense that for some $\varsigma > 0$
\begin{align} \label{eq:g-damping,distr}
\scprd{y,g(y)}_U \ge \varsigma \norm{y}_U^2 \qquad (y\in U). 
\end{align}

\subsubsection{Closed-loop system}

Choosing 
the coupling of the controller~\eqref{eq:controller,distr} to the open-loop system~\eqref{eq:open-loop-diff-eq,distr}-\eqref{eq:open-loop-output,distr} to be a standard feedback interconnection~\eqref{eq:standard-feedback-interconnection}, 
we see that the arising closed-loop system is described by a differential equation of the form
\begin{align} \label{eq:cl-system-diff-eq,distr}
\dot{x}(t) = \mathcal{A}(t)x(t) + f(t,x(t)) + \mathcal{B}(t)u(t)
\end{align}
in the state space $X$ with the additional observation condition
\begin{align} \label{eq:cl-system-output-cond,distr}
y(t) = \mathcal{C}(t)x(t),
\end{align}
where $\mathcal{A}(t)$, $\mathcal{B}(t)$, $\mathcal{C}(t)$ are as above and $f: \R^+_0 \times X \to X$ is defined by
\begin{align*}
f(t,x) := -\mathcal{B}(t)\, g\big(\mathcal{B}(t)^*M(t)x\big).
\end{align*}

\begin{cor} \label{cor:appl-distr}
With the above assumptions, the closed-loop system~\eqref{eq:cl-system-diff-eq,distr}-\eqref{eq:cl-system-output-cond,distr} is well-posed and uniformly globally stable. 
\end{cor}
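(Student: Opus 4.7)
The strategy is to verify the hypotheses of Theorem~\ref{thm:wp, distr} for the closed-loop system~\eqref{eq:cl-system-diff-eq,distr}--\eqref{eq:cl-system-output-cond,distr}, namely Conditions~\ref{ass:1, distr}, \ref{ass:2, distr}, \ref{ass:3, distr} together with $f(t,0)=0$. Most of this reduces to bookkeeping from the setup in Section~\ref{sect:appl-bd,open-loop-and-controller}; the only nontrivial step is the proof of scattering-passivity, and this is where I would concentrate the effort.

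For Condition~\ref{ass:1, distr}, part~(i) follows directly from Lemma~\ref{lm:vereinf-vor-A(t)}, since the factorization $\mathcal{A}(t)=A_0(t)M(t)$ together with the assumed properties of $A_0$ and $M$ meets its hypotheses; parts~(ii) and~(iii) are either assumed outright or follow by composition, using that $g$, $\mathcal{B}(\cdot)$, $\mathcal{B}(\cdot)^*$ and $M(\cdot)$ are locally Lipschitz together with the uniform bounds $\ul m\le M(t)\le \ol m$. For Condition~\ref{ass:3, distr}, part~(i) holds because with the storage function chosen below one has $\partial_xV(t,x)=M(t)x$ and $\partial_tV(t,x)=\tfrac12\scprd{\dot M(t)x,x}$, both manifestly bounded on bounded subsets of $\R^+_0\times X$; part~(ii) is even stronger, since $t\mapsto \mathcal{C}(t)x(t,s,x_s,u)$ is continuous as a composition of the strongly continuous family $\mathcal{C}(\cdot)=\mathcal{B}(\cdot)^*M(\cdot)$ with the continuous curve $x(\cdot,s,x_s,u)$. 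The identity $f(t,0)=0$ holds provided $g(0)=0$, which is the natural standing assumption on a static feedback controller having $0$ as an equilibrium.

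The key verification is Condition~\ref{ass:2, distr}. I would take the storage function $V(t,x):=\tfrac12\scprd{M(t)x,x}_X$. Condition~\ref{ass:2, distr}(ii) is then immediate from $\ul m\le M(t)\le \ol m$, yielding $\ul\psi(r)=\tfrac{\ul m}{2}r^2$ and $\ol\psi(r)=\tfrac{\ol m}{2}r^2$. For the passivity estimate I would first establish the stronger strict impedance-passivity~\eqref{eq:imp-passive}. Along a classical solution, $\dot V(t,x(t))$ splits into four summands: the $\dot M$-term is $\le 0$ by the operator monotonicity~\eqref{eq:M(t)-mon-decreasing}; the $\mathcal{A}$-term equals $\Re\scprd{A_0(t)\xi,\xi}$ with $\xi:=M(t)x(t)\in D_0$, using the symmetry of $M(t)$, and is $\le 0$ by Lumer-Phillips dissipativity of the contraction generator $A_0(t)$; the nonlinear $f$-term rewrites, via the collocation identity~\eqref{eq:collocation-condition}, as $-\Re\scprd{y(t),g(y(t))}_U\le -\varsigma\|y(t)\|_U^2$ by the strict damping~\eqref{eq:g-damping,distr}; and the input term becomes $\Re\scprd{u(t),y(t)}_U$, again by collocation. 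Summing the four estimates yields~\eqref{eq:imp-passive}, and scattering-passivity follows by Young's inequality as indicated after~\eqref{eq:imp-passive}. With all hypotheses verified, Theorem~\ref{thm:wp, distr} applies and yields both well-posedness and uniform global stability of the closed-loop system. The main obstacle is the clean handling of the $\mathcal{A}$-term, which requires simultaneously exploiting the symmetry of $M(t)$ and the contraction-semigroup structure of $A_0(t)$.
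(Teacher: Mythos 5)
Your proposal is correct and follows essentially the same route as the paper: take $V(t,x)=\tfrac12\scprd{M(t)x,x}_X$, verify strict impedance-passivity via the monotonicity of $M$, the dissipativity of the contraction generators, the collocation identity $\mathcal{C}(t)=\mathcal{B}(t)^*M(t)$ and the strict damping of $g$, then pass to scattering-passivity and invoke Theorem~\ref{thm:wp, distr}. (Minor remark: $g(0)=0$ need not be assumed separately -- it already follows from the strict damping condition~\eqref{eq:g-damping,distr} together with the continuity of $g$.)
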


\begin{proof}
We verify the assumptions of Theorem~\ref{thm:wp, distr}. 
As a first step, we observe that Condition~\ref{ass:1, distr} is satisfied.
Indeed, this immediately follows from our assumptions above and Lemma~\ref{lm:vereinf-vor-A(t)}.
\smallskip

As a second step, we show that Condition~\ref{ass:2, distr} is satisfied with
\begin{align}
V(t,x) := \frac{1}{2} \scprd{M(t)x,x}_X \qquad ((t,x) \in \R^+_0\times X). 
\end{align} 
Indeed, $V \in C^1(\R^+_0\times X,\R^+_0)$ and for every $s \in \R^+_0$ and $(x_s,u) \in \mathcal{D}_s$ we see with $x(t) := x(t,s,x_s,u)$ and $y(t) := \mathcal{C}(t)x(t,s,x_s,u)$ that
\begin{align}
\frac{\d}{\d t} V(t,x(t)) 
&= \frac{1}{2} \langle \dot{M}(t)x(t),x(t) \rangle_X + \Re \scprd{M(t)x(t), A(t)x(t)}_X \notag \\
&\quad - \Re \scprd{M(t)x(t),\mathcal{B}(t) \, g(\mathcal{B}(t)^*M(t)x(t))}_X + \Re \scprd{M(t) x(t),\mathcal{B}(t)u(t)}_X \notag \\
&\le -\Re \scprd{\mathcal{C}(t)x(t),g(\mathcal{C}(t)x(t))}_U + \Re \scprd{\mathcal{C}(t)x(t),u(t)}_U \notag \\
&\le \Re \scprd{u(t),y(t)}_U - \varsigma \norm{y(t)}_U^2
\end{align}
for all $t \in [s,T_{s,x_s,u})$. (In the first inequality, we used the monotonicity~\eqref{eq:M(t)-mon-decreasing} and the contraction semigroup generation assumption from Lemma~\ref{lm:vereinf-vor-A(t)} and in the second inequality we used the strict damping assumption~\eqref{eq:g-damping,distr}.)
Consequently, Condition~\ref{ass:2, distr}(i) is satisfied with $\alpha := \frac{1}{2\varsigma}$ and $\beta := \frac{\varsigma}{2}$.
And in view of~\eqref{eq:M(t)-lower-and-upper-bounded}, Condition~\ref{ass:2, distr}(ii) is satisfied as well.
\smallskip

As a third step, we observe that Condition~\ref{ass:3, distr} is satisfied. 
Condition~\ref{ass:3, distr}(i) is obviously satisfied and in order to verify 
Condition~\ref{ass:3, distr}(ii) we have only to use that $t \mapsto \mathcal{C}(t) = \mathcal{B}(t)^*M(t)$ is locally Lipschitz continuous. 
\end{proof}

We close with a concrete example. In spite of the boundary control and observation used there, it can be formulated as a system with bounded in- and output operators. 

\begin{ex}
Consider an Euler-Bernoulli beam with possibly time-dependent flexural rigidity, that is, the transverse displacement $w(t,\zeta)$ of the beam at position $\zeta \in (a,b)$ is governed by the partial differential equation
\begin{align} \label{eq:E-B-beam}
\rho \, \partial_t^2 w(t,\zeta) = \lambda(t) \, \partial_{\zeta}^4 w(t,\zeta)
\end{align}
for $t \in \R^+_0$ and $\zeta \in (a,b)$, where $\rho \in (0,\infty)$ is the mass density and $\lambda(t) \in (0,\infty)$ is the flexural rigidity of the beam. We assume that the beam is clamped at its left end, that is,
\begin{align}
w(t,a) = 0 \qquad \text{and} \qquad \partial_{\zeta}w(t,a) = 0 
\end{align}
and that a point mass (with mass $m$ and moment of inertia $J$) is attached at the right end of the beam. Also, a piezoelectric film is bonded to the beam which applies a bending moment to the beam when a voltage is applied to it. We assume that no external force acts on the tip mass, that is,
\begin{align} 
m \, \partial_t^2 w(t,b) - \lambda(t) \, \partial_{\zeta}^3 w(t,b) = 0,
\end{align}
and that the torque acting on the tip mass is given by the voltage $\ul{u}(t)$ of the piezoelectric film (control input) multiplied with a modulating prefactor $\kappa(t)$, that is,
\begin{align}
J \, \partial_t^2 \partial_{\zeta} w(t,b) + \lambda(t) \, \partial_{\zeta}^2 w(t,b) = \kappa(t) \ul{u}(t).
\end{align}
As the measurement $\ul{y}(t)$ (observation output), we take the angular velocity at the right end of the beam multiplied with the same modulating prefactor $\kappa(t)$, that is,
\begin{align} \label{eq:output-E-B-beam}
\ul{y}(t) = \kappa(t) \, \partial_t \partial_{\zeta} w(t,b).
\end{align}
And finally, the modulation function $t \mapsto \kappa(t)$ is assumed to be twice continuously differentiable, while the flexural rigidity $t \mapsto \lambda(t)$ is assumed to be monotonically decreasing and twice continuously differentiable such that
\begin{align*}
\ul{m} \le \lambda(t) \le \ol{m} \qquad (t \in \R^+_0).
\end{align*}
for some constants $\ul{m}, \ol{m} \in (0,\infty)$. 
See, for instance, \cite{MiSt16}, \cite{CuZw16} or \cite{Sl89} for autonomous versions of this linear system. 
Writing 
\begin{equation} \label{eq:E-B-beam-reformulation-1}
\begin{gathered} 
x_1(t)(\zeta) := w(t,\zeta), \qquad x_2(t)(\zeta) := \rho \, \partial_t w(t,\zeta), \qquad x_3(t) := \partial_t w(t,b), \\
x_4(t) := \partial_t \partial_{\zeta} w(t,b),
\end{gathered}
\end{equation}
we can bring the system~\eqref{eq:E-B-beam}-\eqref{eq:output-E-B-beam} to the form~\eqref{eq:open-loop-diff-eq,distr}-\eqref{eq:open-loop-output,distr} of a linear system with bounded in- and output operators. Indeed, let $\mathcal{A}(t) := \mathcal{A}_0 M(t)$ with
\begin{align} \label{eq:E-B-beam-reformulation-2}
\mathcal{A}_0 x := 
\begin{pmatrix}
x_2/\rho \\ -\partial_{\zeta}^4 x_1 \\ \partial_{\zeta}^3 x_1(b)/m \\ -\partial_{\zeta}^2 x_1(b)/J
\end{pmatrix}
\qquad \text{and} \qquad 
M(t) 
:= 
\begin{pmatrix}
\lambda(t) & 0 & 0 & 0 \\
0 & 1 & 0 & 0 \\
0 & 0 & 1 & 0 \\
0 & 0 & 0 & 1
\end{pmatrix}
\end{align}
for $x = (x_1,x_2,x_3,x_4) \in D(\mathcal{A}_0)$ in the state space $X$, where
\begin{gather*}
D(\mathcal{A}_0) := \big\{ (x_1,x_2,x_3,x_4) \in H^4((a,b),\R) \times H^2((a,b),\R) \times \R \times \R: x_1(a), \partial_{\zeta}x_1(a) = 0, \\
x_2(a), \partial_{\zeta}x_2(a) = 0 \text{ and } x_3 = x_2(b)/\rho \text{ and } x_4 = \partial_{\zeta}x_2(b)/\rho \big\}, \\
X := \big\{ (x_1,x_2,x_3,x_4) \in H^2((a,b),\R) \times L^2((a,b),\R) \times \R \times \R: x_1(a), \partial_{\zeta}x_1(a) = 0 \big\}
\end{gather*}
and where the state space $X$ is endowed with the scalar product $\scprd{\cdot,\cdot \cdot}_X$ defined by
\begin{align*}
\scprd{x,\tilde{x}}_X := \int_a^b \partial_{\zeta}^2 x_1(\zeta) \partial_{\zeta}^2 \tilde{x}_1(\zeta) \d \zeta + \frac{1}{\rho} \int_a^b x_2(\zeta) \tilde{x}_2(\zeta) \d \zeta + m \, x_3 \tilde{x}_3 + J \, x_4 \tilde{x}_4.
\end{align*}
Also, let $U, Y := \R$ and 
\begin{align} \label{eq:E-B-beam-reformulation-3}
\mathcal{B}(t)u := \frac{\kappa(t)}{J} \begin{pmatrix} 0 \\ 0 \\ 0 \\ u\end{pmatrix}
\qquad \text{and} \qquad
\mathcal{C}(t)x := \kappa(t) x_4
\end{align}
for $u \in U$ and $x = (x_1,x_2,x_3,x_4) \in X$. With these choices~\eqref{eq:E-B-beam-reformulation-1}-\eqref{eq:E-B-beam-reformulation-3}, it is straightforward to verify that the pde system~\eqref{eq:E-B-beam}-\eqref{eq:output-E-B-beam} is indeed a linear system of the form~\eqref{eq:open-loop-diff-eq,distr}-\eqref{eq:open-loop-output,distr} and that it satisfies all the assumptions -- in particular, the collocation and monotonicity conditions \eqref{eq:collocation-condition} and \eqref{eq:M(t)-mon-decreasing} -- on the open-loop system from Section~\ref{sect:appl-bd,open-loop-and-controller}. (In order to see that $A(t) := \mathcal{A}(t) = \mathcal{A}_0 M(t)$ indeed satisfies the assumptions of Lemma~\ref{lm:vereinf-vor-A(t)}, notice that $\mathcal{A}_0$ is a multiplicative perturbation of the contraction semigroup generator $\mathcal{A}_{00}$ from Section~8.1 of~\cite{Sl89}, namely 
\begin{align*}
\mathcal{A}_0 = V_{m,J} \mathcal{A}_{00} V_{\rho} \quad \text{with} \quad V_{m,J} := \mathrm{diag}(1,1,1/m,1/J), \quad V_{\rho} := \mathrm{diag}(1,1/\rho,1,1),
\end{align*}
and apply multiplicative perturbation arguments in the spirit of Lemma~7.2.3 of~\cite{JacobZwart}.)
So, as soon as the controller is chosen as in Section~\ref{sect:appl-bd,open-loop-and-controller} above, the arising closed-loop system will be well-posed and uniformly globally stable by virtue of Corollary~\ref{cor:appl-distr}.~$\blacktriangleleft$
\end{ex}

\subsection{Case with unbounded control and observation operators} \label{sect:appl-unbd}

\subsubsection{Setting: open-loop system and controller} \label{sect:appl-bdry-open-loop}

As our open-loop system, we consider a non-autonomous linear port-Hamiltonian system of order $N \in \N$ on a bounded  interval $(a,b)$ with control and observation at the boundary~\cite{JaLa19}. 
Such a system evolves according to the differential equation
\begin{align} \label{eq:open-loop-diff-eq}
\dot{\ul{x}}(t) = \ul{\mathcal{A}}(t)\ul{x}(t) 
=  \ul{P}_N \partial_{\zeta}^N(\ul{\mathcal{H}}(t)\ul{x}(t)) 
+ \dotsb + \ul{P}_1 \partial_{\zeta}(\ul{\mathcal{H}}(t)\ul{x}(t)) + \ul{P}_0 \ul{\mathcal{H}}(t)\ul{x}(t)
\end{align}
in the state space $\ul{X}:= L^2((a,b),\R^m)$ with $m \in \N$ and with  the additional 
control and 
observation conditions
\begin{align} \label{eq:open-loop-input/output-eq}
\ul{u}(t) = \ul{\mathcal{B}}(t)\ul{x}(t) \quad \text{and} \quad \ul{y}(t) =\ul{\mathcal{C}}(t)\ul{x}(t).
\end{align}
In these equations, $\ul{\mathcal{A}}(t): D(\ul{\mathcal{A}}(t)) \subset \ul{X} \to \ul{X}$ is the linear operator defined by
\begin{gather}
\ul{\mathcal{A}}(t)\ul{x} := \ul{\mathcal{A}}_0 \ul{\mathcal{H}}(t)\ul{x} := \ul{P}_N \partial_{\zeta}^N(\ul{\mathcal{H}}(t)\ul{x}) 
+ \dotsb + \ul{P}_1 \partial_{\zeta}(\ul{\mathcal{H}}(t)\ul{x}) + \ul{P}_0 \ul{\mathcal{H}}(t)\ul{x} \notag \\
D(\ul{\mathcal{A}}(t)) := \big\{ \ul{x} \in \ul{X}: \ul{\mathcal{H}}(t)\ul{x} \in H^N((a,b),\R^m) \text{ and } W_{B,1} (\ul{\mathcal{H}}(t) \ul{x})|_{\partial} = 0 \big\}
\label{eq:open-loop-domain}
\end{gather}
and $\ul{\mathcal{B}}(t), \ul{\mathcal{C}}(t): D(\ul{\mathcal{A}}(t)) \subset \ul{X} \to \R^k$ are the linear boundary control and observation operators defined by
\begin{align*}
\ul{\mathcal{B}}(t)\ul{x} := \ul{\mathcal{B}}_0 \ul{\mathcal{H}}(t)\ul{x} := W_{B,2} (\ul{\mathcal{H}}(t) \ul{x})|_{\partial}
\quad \text{and} \quad
\ul{\mathcal{C}}(t)\ul{x} := \ul{\mathcal{C}}_0 \ul{\mathcal{H}}(t)\ul{x} := W_{C} (\ul{\mathcal{H}}(t) \ul{x})|_{\partial}
\end{align*}
where $W_{B,1} \in \R^{(mN-k)\times 2mN}$ and $W_{B,2}, W_C \in \R^{k \times 2mN}$ with $k \in \{1, \dots, mN\}$ and where, for a function $\ul{f} \in H^N((a,b),\R^m)$, the symbol $\ul{f}|_{\partial}$ denotes the (column) vector consisting of the boundary values of the first $N-1$ derivatives of $\ul{f}$, more precisely:
\begin{align*}
\ul{f}|_{\partial} := \big(\ul{f}(b)^{\top}, \ul{f}'(b)^{\top}, \dots, \ul{f}^{(N-1)}(b)^{\top}, \ul{f}(a)^{\top}, \ul{f}'(a)^{\top}, \dots, \ul{f}^{(N-1)}(a)^{\top} \big)^{\top} \in \R^{2mN}. 
\end{align*}
%
As usual, $\ul{P}_0,\ul{P}_1, \dots, \ul{P}_N \in \R^{m\times m}$ are matrices such that $\ul{P}_N$ is invertible and $\ul{P}_1,\dots, \ul{P}_N$ are alternately symmetric and skew-symmetric while $\ul{P}_0$ is dissipative: 
\begin{align}
\ul{P}_l^{\top} = (-1)^{l+1} \ul{P}_l \qquad (l \in \{1,\dots,N\}) 
\qquad \text{and} \qquad \ul{P}_0^{\top} + \ul{P}_0 \le 0.
\end{align} 
%
Also, $\ul{\mathcal{H}}(t)(\zeta) \in \R^{m\times m}$ are symmetric matrices for $(t,\zeta) \in \R^+_0 \times (a,b)$ satisfying the following assumptions: 
\begin{itemize}
\item there exist finite positive constants $\ul{m}, \ol{m} \in (0,\infty)$ such that
\begin{align} \label{eq:H(t)(zeta)-lower-and-upper-bounded}
\ul{m} \le \ul{\mathcal{H}}(t)(\zeta) \le \ol{m} \qquad ((t,\zeta) \in \R^+_0 \times (a,b))
\end{align}
\item $\zeta \mapsto \ul{\mathcal{H}}(t)(\zeta) \in \R^{m\times m}$ is measurable for every fixed $t \in \R^+_0$
\item $t \mapsto \ul{\mathcal{H}}(t) \in L(\ul{X})$ is twice strongly continuously differentiable and monotonically decreasing, that is, for every $\ul{x} \in \ul{X}$ 
\begin{align} \label{eq:H(t)-mon-decreasing}
\scprd{\ul{\mathcal{H}}(t)\ul{x},\ul{x}}_2 \le \scprd{\ul{\mathcal{H}}(s) \ul{x},\ul{x}}_2 
\qquad (s \le t).
\end{align}
\end{itemize}
(In these 
assumptions, 
we used the symbol $\ul{\mathcal{H}}(t)$ not only to denote the measurable function $\ul{\mathcal{H}}(t): (a,b) \to \R^{m\times m}$ but, as usual, also to denote the corresponding multiplication operator $\ul{X} \ni \ul{x} \mapsto \ul{\mathcal{H}}(t)\ul{x} \in \ul{X}$, which belongs to $L(\ul{X})$ 
by virtue of~(\ref{eq:H(t)(zeta)-lower-and-upper-bounded}.b).)
%
We further assume that the boundary matrix
\begin{align} \label{eq:W-matrix, def}
W := \begin{pmatrix}
W_{B} \\ W_C
\end{pmatrix}
\in \R^{(mN+k)\times 2mN}
\qquad \text{with} \qquad
W_B := \begin{pmatrix}
W_{B,1} \\ W_{B,2}
\end{pmatrix}
\end{align}
is a matrix of full row rank $mN+k$.
%
And finally, we assume that our open-loop system~\eqref{eq:open-loop-diff-eq}-\eqref{eq:open-loop-input/output-eq} with $\mathcal{H}(t)(\zeta) \equiv I$ (identity matrix) is impedance-passive, that is, 
\begin{align} \label{eq:open-loop-imp-passive-with-H=1}
\scprd{\ul{x},\ul{\mathcal{A}}_0 \ul{x}}_2 \le (\ul{\mathcal{B}}_0 \ul{x})^{\top} \ul{\mathcal{C}}_0 \ul{x} 
\qquad (\ul{x} \in D(\ul{\mathcal{A}}_0)).
\end{align}
Concrete examples 
of open-loop systems that satisfy all the above assumptions will be given below (Example~\ref{ex:vibrating-string} and~\ref{ex:Timoshenko-beam}).
%
%
We now couple our open-loop system~\eqref{eq:open-loop-diff-eq}-\eqref{eq:open-loop-input/output-eq} to a nonlinear dynamic 
controller 
described by the ordinary differential equation
\begin{align} \label{eq:controller-diff-eq}
\dot{v}(t) = \begin{pmatrix} \dot{v}_1(t) \\ \dot{v}_2(t) \end{pmatrix}
= 
\begin{pmatrix}
K_c v_2(t) \\
-\nabla \mathcal{P}_c(v_1(t)) - \mathcal{R}_c(t,K_c v_2(t)) + B_c u_c(t) 
\end{pmatrix} 
\end{align}
in the state space $X_c := \R^{m_c} \times \R^{m_c}$ with the additional input-output relation
\begin{align} \label{eq:controller-input-output-relation}
y_c(t) = B_c^{\top} K_c v_2(t) + S_c u_c(t). 
\end{align}
%
In these equations, $K_c \in \R^{m_c \times m_c}$, $B_c \in \R^{m_c \times k}$, $S_c \in \R^{k \times k}$ represent a generalized mass matrix, an input matrix, and a direct feedthrough matrix respectively satisfying 
$K_c > 0$ and $S_c > 0$. In particular, 
\begin{align} \label{eq:S_c pos def} 
y^{\top} S_c y \ge \varsigma \,|y|^2 \qquad (y \in \R^k),
\end{align}
where $\varsigma > 0$ is the smallest eigenvalue of $S_c$. 
%
Also, the potential energy $\mathcal{P}_c: \R^{m_c} \to \R^+_0$ is differentiable such that $\nabla \mathcal{P}_c$ is locally Lipschitz continuous and $\mathcal{P}_c(0) = 0$ and the damping function $\mathcal{R}_c: \R_0^+ \times \R^{m_c} \to \R^{m_c}$ is locally Lipschitz continuous such that $\mathcal{R}_c(t,0) = 0$ for all $t$. 
And finally, we assume that
\begin{itemize}
\item $\mathcal{P}_c$ is positive definite and radially unbounded, 
that is, $\mathcal{P}_c(v_1) > 0$ for all $v_1 \in \R^{m_c} \setminus \{0\}$ 
and $\mathcal{P}_c(v_1) \longrightarrow \infty$ as $|v_1| \to \infty$
\item $\mathcal{R}_c(t,\cdot)$ is damping, that is, $v_2^{\top} \mathcal{R}_c(t,v_2) \ge 0$ for all $(t,v_2) \in \R^+_0 \times \R^{m_c}$.
\end{itemize}

\subsubsection{Closed-loop system}

Choosing 
the coupling of the controller~\eqref{eq:controller-diff-eq}-\eqref{eq:controller-input-output-relation} to the open-loop system~\eqref{eq:open-loop-diff-eq}-\eqref{eq:open-loop-input/output-eq} to be a standard feedback interconnection~\eqref{eq:standard-feedback-interconnection}, 
we see that the arising closed-loop system is described by a differential equation of the form
\begin{align} \label{eq:cl-system-diff-eq,bdry}
\dot{x}(t) = \mathcal{A}(t)x(t) + f(t,x(t))
\end{align}
in the state space $X := \ul{X}\times X_c$ with the following additional conditions for the in- and output $u, y$ of the closed-loop system: 
\begin{align} \label{eq:cl-system-input-output-cond,bdry}
u(t) = \mathcal{B}(t)x(t) \qquad \text{and} \qquad y(t) = \mathcal{C}(t)x(t).
\end{align}
In these equations, $\mathcal{A}(t): D(\mathcal{A}(t)) \subset X \to X$ and $f: \R^+_0 \times X \to X$ are the linear and nonlinear operator defined respectively  by
\begin{align*}
\mathcal{A}(t)x 
:= 
\begin{pmatrix}
\ul{\mathcal{A}}(t)\ul{x} \\ K_c v_2 \\ -v_1 + B_c \ul{\mathcal{C}}(t)\ul{x}
\end{pmatrix}
\qquad \text{and} \qquad
f(t,x)
:=
\begin{pmatrix}
0 \\ 0 \\ v_1 - \nabla \mathcal{P}_c(v_1) - \mathcal{R}_c(t,K_c v_2)
\end{pmatrix}
\end{align*}
with $D(\mathcal{A}(t)) := D(\ul{\mathcal{A}}(t)) \times X_c$ 
and $\mathcal{B}(t), \mathcal{C}(t): D(\mathcal{A}(t)) \subset X \to \R^k$ are the linear input and output operators defined by
\begin{align*}
\mathcal{B}(t)x := \ul{\mathcal{B}}(t)\ul{x} + B_c^{\top}K_c v_2 + S_c \ul{\mathcal{C}}(t)\ul{x}
\qquad \text{and} \qquad
\mathcal{C}(t)x := \ul{\mathcal{C}}(t)\ul{x},
\end{align*} 
where $\ul{x}$ and $v$ denote the components of $x$, that is, $(\ul{x},v_1,v_2) = (\ul{x},v) = x$.

\begin{cor} \label{cor:appl-bdry}
With the above assumptions, the closed-loop system~\eqref{eq:cl-system-diff-eq,bdry}-\eqref{eq:cl-system-input-output-cond,bdry} is well-posed and uniformly globally stable. 
\end{cor}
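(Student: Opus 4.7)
The plan is to verify the three hypotheses of Theorem~\ref{thm:wp, bdry} — Condition~\ref{ass:1, bdry}, \ref{ass:2, bdry} and~\ref{ass:3, bdry} — for the closed-loop system~\eqref{eq:cl-system-diff-eq,bdry}-\eqref{eq:cl-system-input-output-cond,bdry} and then invoke that theorem. The key ingredients are a Kato factorization with a \emph{time-independent} reference operator $A_0$ on $X = \ul{X}\times X_c$ and a natural storage function combining the Hamiltonian of the port-Hamiltonian part with the mechanical energy of the controller.

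\textbf{Condition~\ref{ass:1, bdry}.} I would set $M(t) := \mathrm{diag}(\ul{\mathcal{H}}(t), I_{m_c}, I_{m_c}) \in L(X)$; by~\eqref{eq:H(t)(zeta)-lower-and-upper-bounded} and the assumptions on $\ul{\mathcal{H}}$ it is symmetric, uniformly bounded above and below, and twice strongly continuously differentiable. After the change of variables $\tilde{x} := \ul{\mathcal{H}}(t)\ul{x}$, the operator $A(t) := \mathcal{A}(t)|_{\ker \mathcal{B}(t)}$ equals $A_0 M(t)$, where
\begin{align*}
A_0(\tilde{x}, v_1, v_2) &:= (\ul{\mathcal{A}}_0 \tilde{x},\, K_c v_2,\, -v_1 + B_c \ul{\mathcal{C}}_0 \tilde{x}),\\
D_0 &:= \{(\tilde{x}, v_1, v_2): \tilde{x} \in H^N((a,b),\R^m),\, W_{B,1}\tilde{x}|_\partial = 0, \\
&\qquad (W_{B,2}+S_c W_C)\tilde{x}|_\partial + B_c^\top K_c v_2 = 0\}.
\end{align*}
Combining the impedance-passivity~\eqref{eq:open-loop-imp-passive-with-H=1} of the uncoupled port-Hamiltonian operator with $K_c, S_c > 0$ shows that $A_0$ is dissipative on $X$ equipped with $\scprd{x, \tilde x}_X := \scprd{\ul x, \tilde{\ul x}}_{\ul X} + v_1^\top \tilde v_1 + v_2^\top K_c \tilde v_2$, and a Lumer--Phillips range argument exploiting the full row rank of $W$ yields that $A_0$ generates a contraction semigroup on $X$; Lemma~\ref{lm:vereinf-vor-A(t)} then produces Condition~\ref{ass:1, bdry}(i). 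For~(ii) I would again use the full row-rank of $W$ to choose, for every $u \in \R^k$, a vector $v^*_u \in \R^{2mN}$ with $W_{B,1}v^*_u = 0$, $W_{B,2}v^*_u = u$, $W_C v^*_u = 0$, lift it to a polynomial $p_u \in H^N((a,b),\R^m)$ of degree $2N-1$ with $p_u|_\partial = v^*_u$, and set $\mathcal{R}(t)u := (\ul{\mathcal{H}}(t)^{-1} p_u, 0, 0)$. Then $\mathcal{A}(t)\mathcal{R}(t) u = (\ul{\mathcal{A}}_0 p_u, 0, 0)$ is $t$-independent, and the required local Lipschitz continuity of $\mathcal{R}(t)$, $\dot{\mathcal{R}}(t)$ and $\mathcal{A}(t)\mathcal{R}(t)$ follows from that of $t \mapsto \ul{\mathcal{H}}(t)^{-1}$. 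Finally,~(iii) is immediate from the local Lipschitz continuity of $\nabla \mathcal{P}_c$ and $\mathcal{R}_c(t,\cdot)$.

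\textbf{Condition~\ref{ass:2, bdry}.} For~(i) I would use the storage function
$$V(t, x) := \tfrac{1}{2}\scprd{\ul{\mathcal{H}}(t)\ul{x}, \ul{x}}_{\ul X} + \mathcal{P}_c(v_1) + \tfrac{1}{2} v_2^\top K_c v_2.$$
Differentiating along a classical solution and inserting the closed-loop equations, the PDE contribution yields — via~\eqref{eq:open-loop-imp-passive-with-H=1} applied to $\tilde x(t) = \ul{\mathcal{H}}(t)\ul{x}(t)$ and the feedback relations $\ul{u} = u - y_c$, $\ul{y} = y$ —
$$\tfrac{1}{2}\scprd{\dot{\ul{\mathcal{H}}}(t)\ul{x}, \ul{x}}_{\ul X} + u^\top y - v_2^\top K_c B_c y - y^\top S_c y,$$
while the ODE contribution reduces, after cancellation of $\pm \nabla \mathcal{P}_c(v_1)^\top K_c v_2$ (using $K_c = K_c^\top$), to $v_2^\top K_c B_c y - (K_c v_2)^\top \mathcal{R}_c(t, K_c v_2)$. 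The cross terms $\pm v_2^\top K_c B_c y$ cancel, and~\eqref{eq:H(t)-mon-decreasing}, the damping of $\mathcal{R}_c$, together with $S_c \ge \varsigma I$ from~\eqref{eq:S_c pos def}, yield $\dot{V} \le u^\top y - \varsigma |y|^2$, hence the scattering-passivity~\eqref{eq:scattering-passive, bdry} with $\alpha := 1/(2\varsigma)$, $\beta := \varsigma/2$. Part~(ii) then follows from the uniform bounds on $\ul{\mathcal{H}}$, the positive definiteness and radial unboundedness of $\mathcal{P}_c$ and positivity of $K_c$, by a standard construction of $\ul\psi, \ol\psi \in \mathcal{K}_\infty$ from majorants and minorants of $\mathcal{P}_c$ on balls.

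\textbf{Condition~\ref{ass:3, bdry} and main obstacle.} Part~(i) holds because $\ul{\mathcal{H}}(t), \dot{\ul{\mathcal{H}}}(t), \nabla \mathcal{P}_c, K_c$ are all locally bounded, and~(ii) holds because $y(\cdot, s, x_s, u) = W_C(\ul{\mathcal{H}}(\cdot)\ul{x}(\cdot))|_\partial$ is continuous by the $C^1$-regularity of the classical solution and the continuity of $\ul{\mathcal{H}}$ and the boundary trace on $H^N$. The main obstacle I anticipate is the maximality step for $A_0$: since $D_0$ couples $\tilde x|_\partial$ to $v_2$ through $(W_{B,2}+S_c W_C)\tilde x|_\partial + B_c^\top K_c v_2 = 0$, verifying $\ran(\lambda I - A_0) = X$ for some $\lambda > 0$ requires eliminating $v_2$ by a Schur-type argument and reducing to a range condition for a pure port-Hamiltonian operator with modified boundary weights $W_{B,2} + S_c W_C$, where the full row rank of $W$ must be exploited essentially.
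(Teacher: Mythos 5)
Your proposal is correct and follows essentially the same route as the paper: the same factorization $\mathcal{A}(t)=\mathcal{A}_0M(t)$ with $M(t)=\mathrm{diag}(\ul{\mathcal{H}}(t),I)$, the same storage function $V(t,x)=\tfrac12\scprd{\ul{\mathcal{H}}(t)\ul{x},\ul{x}}_2+\mathcal{P}_c(v_1)+\tfrac12 v_2^{\top}K_cv_2$ with the same constants $\alpha=1/(2\varsigma)$, $\beta=\varsigma/2$, and the same graph-norm/trace argument for the measurability of the output. The only difference is that the paper outsources the contraction-semigroup generation of $A_0=\mathcal{A}_0|_{\ker\mathcal{B}_0}$ and the existence of the time-independent right-inverse $\mathcal{R}_0$ (so that $\mathcal{R}(t)=M(t)^{-1}\mathcal{R}_0$, which is exactly your explicit polynomial-lift construction) to Lemma~2.3 and the remark after Condition~3.1 of~\cite{ScZw18}, whereas you sketch the Lumer--Phillips argument inline and correctly identify its range condition as the one nontrivial step.
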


\begin{proof}
We verify the assumptions of Theorem~\ref{thm:wp, bdry}.
As a first step, we show that Condition~\ref{ass:1, bdry} is satisfied. 
We first observe that the linear part $\mathcal{A}(t)$ and the in- and output operators $\mathcal{B}(t)$, $\mathcal{C}(t)$ of our closed-loop system~\eqref{eq:cl-system-diff-eq,bdry}-\eqref{eq:cl-system-input-output-cond,bdry}  factorize in the form
\begin{gather*}
\mathcal{A}(t) = \mathcal{A}_0 M(t), \qquad \mathcal{B}(t) = \mathcal{B}_0 M(t), \qquad \mathcal{C}(t) = \mathcal{C}_0 M(t)\\
M(t) 
:= 
\begin{pmatrix}
\ul{\mathcal{H}}(t) & 0 \\ 0 & I
\end{pmatrix},
\end{gather*}
where $I$ is the identity operator on $X_c$. 
We also observe that $\mathcal{A}_0$ and $\mathcal{B}_0$, $\mathcal{C}_0$ are the linear part and the in- and output operators of the closed-loop system~(2.16)-(2.17) from~\cite{ScZw18} with $\mathcal{H}(\zeta) \equiv I$, respectively, and that by virtue of our assumptions above the assumptions from~\cite{ScZw18} -- 
and in particular Condition~2.1 and~3.1 from~\cite{ScZw18} -- are satisfied with $\mathcal{H}(\zeta) \equiv I$. So, it follows that
\begin{itemize}
\item $A_0 := \mathcal{A}_0|_{\ker \mathcal{B}_0}$ is a contraction semigroup generator on $X$ w.r.t.~the scalar product $\scprd{\cdot,\cdot \cdot}_X$ defined by $\scprd{x,y}_X := \scprd{\ul{x},\ul{y}}_2 + v_1^{\top}w_1 + v_2^{\top}K_c w_2$ (Lemma~2.3 of~\cite{ScZw18}!)
\item $\mathcal{B}_0$ has a linear bounded right-inverse $\mathcal{R}_0$, that is, $\mathcal{R}_0 \in L(\R^k,X)$ with $\mathcal{R}_0 \R^k \subset D(\mathcal{B}_0) = D(\mathcal{A}_0)$ and $\mathcal{B}_0 \mathcal{R}_0 u = u$ for all $u \in \R^k$
and, of course, $\mathcal{A}_0 \mathcal{R}_0 \in L(\R^k,X)$ (remark after Condition~3.1 of~\cite{ScZw18}!) 
\end{itemize}
With these observations at hand, we now see 
first that the operators
\begin{align*}
A(t) := \mathcal{A}(t)|_{\ker \mathcal{B}(t)} = \mathcal{A}_0|_{\ker \mathcal{B}_0} M(t) = A_0 M(t)
\end{align*}
satisfy the assumptions from Lemma~\ref{lm:vereinf-vor-A(t)} and thus Condition~\ref{ass:1, bdry}(i)
and second that 
the operators
\begin{align*}
\mathcal{R}(t) := M(t)^{-1} \mathcal{R}_0
\end{align*}
satisfy Condition~\ref{ass:1, bdry}(ii). 
And finally, Condition~\ref{ass:1, bdry}(iii) is obviously satisfied by virtue of  our regularity assumptions on $\mathcal{P}_c$ and $\mathcal{R}_c$. 
\smallskip

As a second step, we show that Condition~\ref{ass:2, bdry} is satisfied with 
\begin{align}
V(t,x) := \frac{1}{2} \scprd{\ul{\mathcal{H}}(t)\ul{x},\ul{x}}_2 + \mathcal{P}_c(v_1) + \frac{1}{2}v_2^{\top} K_c v_2 
\qquad ((t,x) \in \R^+_0 \times X).
\end{align}
Indeed, $V \in C^1(\R^+_0\times X,\R^+_0)$ and for every $s \in \R^+_0$ and $(x_s,u) \in \mathcal{D}_s$ we see with $(\ul{x}(t),v_1(t),v_2(t)) := x(t) := x(t,s,x_s,u)$ and $y(t) := \mathcal{C}(t)x(t,s,x_s,u)$ that
\begin{align}
\frac{\d}{\d t} V(t,x(t)) 
&= \frac{1}{2}\langle \dot{\ul{\mathcal{H}}}(t)\ul{x}(t),\ul{x}(t) \rangle_2 + \scprd{\ul{\mathcal{H}}(t)\ul{x}(t),\ul{\mathcal{A}}(t)\ul{x}(t)}_2 \notag \\
&\qquad \qquad \qquad \qquad \,\quad + \nabla \mathcal{P}_c(v_1(t))^{\top} \dot{v}_1(t) + (K_c v_2(t))^{\top} \dot{v}_2(t) \notag\\
&\le (\ul{\mathcal{B}}(t)\ul{x}(t))^{\top} \ul{\mathcal{C}}(t) \ul{x}(t) + (B_c^{\top}K_c v_2(t))^{\top}  \ul{\mathcal{C}}(t) \ul{x}(t) - (K_c v_2(t))^{\top} \mathcal{R}_c(t,K_c v_2(t)) \notag\\
&\le \big(\mathcal{B}(t)x(t) - S_c \ul{\mathcal{C}}(t)\ul{x}(t)\big)^{\top} \ul{\mathcal{C}}(t) \ul{x}(t) \le u(t)^{\top}y(t) - \varsigma |y(t)|^2
\end{align}
for all $t \in [s,T_{s,x_s,u})$. (In the first inequality, we used the monotonicity and impedance-passivity assumption~\eqref{eq:H(t)-mon-decreasing} and~\eqref{eq:open-loop-imp-passive-with-H=1}, in the second inequality we used the damping assumption on $\mathcal{R}_c$, and in the last inequality we used~\eqref{eq:S_c pos def}.)
Consequently, Condition~\ref{ass:2, bdry}(i) is satisfied with $\alpha := \frac{1}{2\varsigma}$ and $\beta := \frac{\varsigma}{2}$.
And in view of~\eqref{eq:H(t)(zeta)-lower-and-upper-bounded} and our assumptions on $\mathcal{P}_c$, Condition~\ref{ass:2, bdry}(ii) is satisfied as well (invoke  Lemma~2.5 of~\cite{ClLeSt98}, for instance, to see that $\mathcal{P}_c$ is equivalent to the 
norm $|\cdot|$ of $\R^{m_c}$).  
\smallskip

As a third step, we show that Condition~\ref{ass:3, bdry} is satisfied. Condition~\ref{ass:3, bdry}(i) is obviously satisfied and in order to verify 
Condition~\ref{ass:3, bdry}(ii) we use that the graph norm $\norm{\cdot}_{\ul{\mathcal{A}}_0} := \norm{\cdot}_{\ul{X}} + \norm{\ul{\mathcal{A}}_0 \cdot}_{\ul{X}}$ 
of the port-Hamiltonian operator $\ul{\mathcal{A}}_0$ is equivalent to the norm of $H^N((a,b),\R^m)$, that is, for some $\ul{c},\ol{c} \in (0,\infty)$
\begin{align*}
\ul{c}\norm{\ul{f}}_{H^N((a,b),\R^m)} \le \norm{\ul{f}}_{\ul{\mathcal{A}}_0} \le \ol{c} \norm{\ul{f}}_{H^N((a,b),\R^m)} 
\qquad (\ul{f} \in D(\ul{\mathcal{A}}_0))
\end{align*}
(Lemma~3.2.3 of~\cite{Augner}). With this equivalence of norms and the continuous embedding $H^N((a,b),\R^m) \hookrightarrow C^{N-1}([a,b],\R^m)$, we get for every $s \in \R^+_0$ and $(x_s,u) \in \mathcal{D}_s$ with $(\ul{x}(t),v_1(t),v_2(t)) := x(t) := x(t,s,x_s,u)$ that
\begin{align} \label{eq:cl-output-continuous,bdry}
\big| (\ul{\mathcal{H}}(t)\ul{x}(t))|_{\partial} &- (\ul{\mathcal{H}}(t_0)\ul{x}(t_0))|_{\partial} \big| 
\le 
\norm{\ul{\mathcal{H}}(t)\ul{x}(t)- \ul{\mathcal{H}}(t_0)\ul{x}(t_0)}_{C^{N-1}([a,b],\R^m)} \notag \\
&\le 
C \norm{\ul{\mathcal{H}}(t)\ul{x}(t)- \ul{\mathcal{H}}(t_0)\ul{x}(t_0)}_{\ul{\mathcal{A}}_0} \notag \\
&= 
C \norm{\ul{\mathcal{H}}(t)\ul{x}(t)- \ul{\mathcal{H}}(t_0)\ul{x}(t_0)}_{\ul{X}} + C \norm{\ul{\mathcal{A}}(t)\ul{x}(t)- \ul{\mathcal{A}}(t_0)\ul{x}(t_0)}_{\ul{X}}
\end{align}
for all $t, t_0 \in [s,T_{s,x_s,u})$. Since $t \mapsto \ul{x}(t) \in \ul{X}$ and $t \mapsto \ul{\mathcal{A}}(t)\ul{x}(t) = \dot{\ul{x}}(t) \in \ul{X}$ are continuous and $t \mapsto \ul{\mathcal{H}}(t)$ is strongly continuous, it follows from~\eqref{eq:cl-output-continuous,bdry} that the classical output
\begin{align*}
t \mapsto y(t,s,x_s,u) = \ul{\mathcal{C}}(t)\ul{x}(t) = W_C (\ul{\mathcal{H}}(t)\ul{x}(t))|_{\partial}
\end{align*}
is continuous and hence measurable, as desired.  
\end{proof}

\begin{ex} \label{ex:vibrating-string}
Consider a vibrating string with possibly time-dependent material coefficients~\cite{JaLa19}, that is, the transverse displacement $w(t,\zeta)$ of the string at 
position $\zeta \in (a,b)$ evolves according to the partial differential equation
\begin{align} \label{eq:string pde}
\partial_t \big( \rho(t,\zeta) \partial_t  w(t,\zeta) \big) =  \partial_{\zeta} \big( T(t,\zeta) \partial_{\zeta}w(t,\zeta) \big)
\end{align}
for $t \in \R^+_0$ and $\zeta \in (a,b)$ (vibrating string equation). 
In these equations, the material coefficients $\rho$, $T$ are the mass density and the Young modulus of the string, respectively. We assume that for some $\ul{m},\ol{m} \in (0,\infty)$ 
\begin{align*}
\ul{m} \le \rho(t,\zeta), T(t,\zeta) \le \ol{m} \qquad ((t,\zeta) \in  \R^+_0 \times (a,b)),
\end{align*}
that for $l = 0,1,2$ the partial derivatives $\partial_t^l \rho$, $\partial_t^l T$ exist and are continuous on $\R^+_0 \times (a,b)$ and that $t \mapsto \rho(t,\zeta)$ is monotonically increasing while $t \mapsto T(t,\zeta)$ is monotonically decreasing for every $\zeta \in (a,b)$. 
Also, assume that the string is clamped at its left end, that is, 
\begin{align} \label{eq:string bdry cond}
\partial_t w(t,a) = 0 \qquad (t \in \R^+_0)
\end{align}  
and that the control input $\ul{u}(t)$ and observation output $\ul{y}(t)$ are given respectively by the force and by the velocity at the right end of the string, that is,
\begin{align} \label{eq:string input/output}
\ul{u}(t) = T(t,b) \partial_{\zeta} w(t,b)
\qquad \text{and} \qquad
\ul{y}(t) = \partial_t w(t,b)
\end{align}
for all $t \in \R^+_0$. With the choices
\begin{align*}
\ul{x}(t)(\zeta) 
:=
\begin{pmatrix}
\rho(t,\zeta) \partial_t w(t,\zeta) \\ \partial_{\zeta} w(t,\zeta)
\end{pmatrix},
\qquad 
\ul{\mathcal{H}}(t,\zeta) 
:=
\begin{pmatrix}
1/\rho(t,\zeta) & 0 \\ 0 & T(t,\zeta)
\end{pmatrix},
\qquad
P_1 := \begin{pmatrix} 0 & 1 \\ 1 & 0 \end{pmatrix}
\end{align*}
and $P_0 := 0 \in \R^{2\times 2}$, the pde~\eqref{eq:string pde} takes the form~\eqref{eq:open-loop-diff-eq} of a port-Hamiltonian system of order $N =1$ and, moreover, the boundary condition~\eqref{eq:string bdry cond} and the in- and output conditions~\eqref{eq:string input/output} take the desired form~\eqref{eq:open-loop-domain} and~\eqref{eq:open-loop-input/output-eq}, 
with matrices $W_{B,1}, W_{B,2}, W_C \in \R^{1\times 4}$. 
It is straightforward to verify that the impedance-passivity condition~\eqref{eq:open-loop-imp-passive-with-H=1} is satisfied, that the matrix $W \in \R^{3\times 4}$ from~\eqref{eq:W-matrix, def} has full rank, and that all the assumptions on $\ul{\mathcal{H}}$, especially the bounds~\eqref{eq:H(t)(zeta)-lower-and-upper-bounded} and the monotonicity~\eqref{eq:H(t)-mon-decreasing}, are satsified. 
So, as soon as the controller is chosen as in Section~\ref{sect:appl-bdry-open-loop} above, the resulting closed-loop system will be well-posed and uniformly globally stable by Corollary~\ref{cor:appl-bdry}.  
~$\blacktriangleleft$
\end{ex}

\begin{ex} \label{ex:Timoshenko-beam}
Consider a Timoshenko beam with possibly time-dependent material coefficients~\cite{JaLa19}, that is, the transverse displacement $w(t,\zeta)$ and the rotation angle $\phi(t,\zeta)$ of the beam at 
position $\zeta \in (a,b)$ evolve according to the partial differential equations
\begin{gather} 
\partial_t \big( \rho(t,\zeta) \partial_t w(t,\zeta) \big) = \partial_{\zeta} \Big( K(t,\zeta) \big( \partial_{\zeta}w(t,\zeta) - \phi(t,\zeta) \big) \Big) \label{eq:Timoshenko pde,1}\\
\partial_t \big( I_r(t,\zeta) \partial_t \phi(t,\zeta) \big) = \partial_{\zeta} \big( E I(t,\zeta) \partial_{\zeta} \phi(t,\zeta) \big) + K(t,\zeta) \big( \partial_{\zeta}w(t,\zeta) - \phi(t,\zeta) \big) \label{eq:Timoshenko pde,2}
\end{gather}
for $t \in \R^+_0$ and $\zeta \in (a,b)$ (Timoshenko beam equations). In these equations, $\rho$, $E$, $I$, $I_r$, $K$ are the mass density, the Young modulus, the moment of inertia, the rotatory moment of inertia, and the shear modulus of the beam, respectively. 
We assume that for some $\ul{m},\ol{m} \in (0,\infty)$ 
\begin{align*}
\ul{m} \le \rho(t,\zeta), E I(t,\zeta), I_r(t,\zeta), K(t,\zeta) \le \ol{m} \qquad ((t,\zeta) \in  \R^+_0 \times (a,b)),
\end{align*}
that for $l = 0,1,2$ the partial derivatives $\partial_t^l \rho$, $\partial_t^l E I$, $\partial_t^l I_r$, $\partial_t^l K$ exist and are continuous on $\R^+_0 \times (a,b)$ and that $t \mapsto \rho(t,\zeta), I_r(t,\zeta)$ are monotonically increasing while $t \mapsto EI(t,\zeta), K(t,\zeta)$ are monotonically decreasing for every $\zeta \in (a,b)$. 
Also, assume that the beam is clamped at its left end, that is, 
\begin{align} \label{eq:Timoshenko bdry cond}
\partial_t w(t,a) = 0 \qquad \text{and} \qquad \partial_t \phi(t,a) = 0
\qquad (t \in \R^+_0)
\end{align}  
(velocity and angular velocity at the left endpoint $a$ are zero), and that the control input $\ul{u}(t)$ is given by the force and the torsional moment at  the right end of the beam and the observation output $\ul{y}(t)$ is given by the velocity and angular velocity at the right end of the beam, that is,
\begin{align} \label{eq:Timoshenko input/output}
\ul{u}(t) = \begin{pmatrix} K(t,b) \big( \partial_{\zeta} w(t,b) - \phi(t,b) \big) \\ E I(t,b) \partial_{\zeta} \phi(t,b) \end{pmatrix},
\qquad
\ul{y}(t) = \begin{pmatrix} \partial_t w(t,b) \\ \partial_t \phi(t,b) \end{pmatrix}
\end{align}
for all $t \in \R^+_0$. With the choices
\begin{align*}
x(t)(\zeta) 
:=
\begin{pmatrix}
\partial_{\zeta} w(t,\zeta) - \phi(t,\zeta) \\
\rho(t,\zeta) \partial_t w(t,\zeta) \\
\partial_{\zeta} \phi(t,\zeta) \\
I_r(t,\zeta) \partial_t \phi(t,\zeta)
\end{pmatrix},
\quad 
\mathcal{H}(\zeta) 
:=
\begin{pmatrix}
K(t,\zeta) & 0 & 0 & 0 \\
0 & 1/\rho(t,\zeta) & 0 & 0 \\ 
0 & 0 & EI(t,\zeta) & 0 \\
0 & 0 & 0 & 1/I_r(t,\zeta)
\end{pmatrix},
\end{align*}
and the same choice of $P_1, P_0 \in \R^{4\times 4}$ as in~\cite{JacobZwart}, the pde~\eqref{eq:Timoshenko pde,1}-\eqref{eq:Timoshenko pde,2} take the form~\eqref{eq:open-loop-diff-eq} of a port-Hamiltonian system of order $N =1$ and, moreover, the boundary condition~\eqref{eq:Timoshenko bdry cond} and the in- and output conditions~\eqref{eq:Timoshenko input/output} take the desired form~\eqref{eq:open-loop-domain} and~\eqref{eq:open-loop-input/output-eq} with matrices $W_{B,1}, W_{B,2}, W_C \in \R^{2\times 8}$. 
It is straightforward to verify that impedance-passivity condition~\eqref{eq:open-loop-imp-passive-with-H=1} is satisfied, that the matrix $W \in \R^{6\times 8}$ from~\eqref{eq:W-matrix, def} has full rank, and that all the assumptions on $\ul{\mathcal{H}}$, especially the bounds~\eqref{eq:H(t)(zeta)-lower-and-upper-bounded} and the monotonicity~\eqref{eq:H(t)-mon-decreasing}, are satsified. 
So, as soon as the controller is chosen as in Section~\ref{sect:appl-bdry-open-loop} above, the resulting closed-loop system will be well-posed and uniformly globally stable by Corollary~\ref{cor:appl-bdry}.~$\blacktriangleleft$
\end{ex}

\section*{Acknowledgements}

I would like to thank Hafida Laasri for interesting discussions and the German Research Foundation (DFG) for financial support through the grant DA 767/7-1. 

\begin{small}

\end{small}

\end{document}